\newcommand{\picref}[1]{\ref{#1}}
\numberwithin{equation}{subsection}
\theoremstyle{plain}
\newtheorem{theorem}[equation]{Theorem}
\newtheorem{proposition}[equation]{Proposition}
\newtheorem{thesis}{Expectation}
\newtheorem{prop}[equation]{Proposition}
\newtheorem{lemma}[equation]{Lemma}
\newtheorem{cor}[equation]{Corollary}
\newtheorem{conj}[equation]{Conjecture}
\newtheorem*{proposition*}{Proposition}
\newtheorem*{theorem*}{Theorem}
\newtheorem*{thesis*}{Expectation}
\newtheorem{corollary}[equation]{Corollary}
\newtheorem{rema}[equation]{Remark}
\theoremstyle{definition}
\newtheorem{definition}[equation]{Definition}
\newtheorem{exam}[equation]{Example}
\newtheorem{proposition-definition}[equation]{Proposition--Definition}
\newtheorem{example}[equation]{Example}
\newcommand{\QQ}{\ensuremath{\mathbb Q}}
\newcommand{\Z}{\ensuremath{\mathbb Z}}
\newcommand\alia{ {\mathcal A}lia } 
\newcommand \B{\mathsf{B}} 
\renewcommand \b{\mathcal{B}} 
\renewcommand\dim{\mbox{dim\,}}
\newcommand \F{\EuScript{F}}
\newcommand \f{\mathcal{F}}
\renewcommand \H{\EuScript{H}}   
\newcommand\Id{\mbox{Id\,}}
\newcommand \M{\EuScript{M}} 
\newcommand \m{\mathcal{M}}  
\newcommand\NU{\mathcal {NU}}
\newcommand\p{ {\mathcal P} } 
\renewcommand\P{\EuScript{P}} 
\newcommand \Q{\EuScript{Q}}  
\newcommand \q{\mathcal{Q}}   
\newcommand{\sh}{\ensuremath{{Sh}}}
\newcommand\ass{{\mathcal{A}ssoc}}
\newcommand\com{\: {\mathcal C}om \:}
\newcommand\lie{\: {\mathcal L}ie \:}
\newcommand\kk{\Bbbk}
\let\le\leqslant
\let\ge\geqslant
\let\leq\leqslant
\let\geq\geqslant
\newcommand \GKdim{\mathsf{Dim}}
\newcommand \Stump{\mathsf{Stump}}
\begin{document}


\title{On generating series of finitely presented operads}%

\author[A.~Khoroshkin]{Anton Khoroshkin}
\address{
Laboratory of Mathematical Physics \&
Faculty of mathematics,  Vavilova 7, 
National Research University Higher School of Economics,
 Moscow 117312, Russia \\
and Institute of Theoretical and Experimental Physics (ITEP Moscow),
B. Cheremushkinskaya, 25, Moscow, 117259, Russia }
\email{akhoroshkin@hse.ru}

\author{Dmitri Piontkovski}
 \address{School of Mathematics,  Faculty of Economics,
Myasnitskaya str. 20, 
National Research University Higher School of Economics,
 Moscow 101990, Russia
}
\email{piont@mccme.ru}

\thanks{
The first author's research is partially supported by RFBR grants 13-02-00478, 13-01-12401, 15-01-09242,  
by "The National Research University--Higher School of Economics" Academic Fund Program in 2013-2014, research grant 14-01-0124, 
by Dynasty foundation and Simons-IUM fellowship.
The second author's research was supported by ``The National Research University Higher School of Economics Academic Fund Program'' in 2013--2014,
 research grant  12-01-0134, and the RFBR project 
 14-01-00416.}

 \begin{abstract}
 Given an operad $P$  with a finite Gr\"obner basis
 of relations,  we study the generating functions for the dimensions of its graded
 components $P(n)$.
 Under moderate assumptions on the relations
 we prove that the exponential generating function for the sequence $\{ \dim P(n) \}$
 is differential algebraic, and in fact algebraic if $P$ is a symmetrization of a non-symmetric operad.
 If, in addition, the  growth of the dimensions of  $ P(n)$ is bounded by an exponent of $n$
 (or a polynomial of $n$, in the non-symmetric case) then, moreover, the
ordinary generating function
 for  the above sequence $\{ \dim P(n) \}$ is rational.
 We give a number of examples of calculations and discuss conjectures about the above generating functions for more general
 classes of operads.
  \end{abstract}

\maketitle

\setcounter{section}{-1}


\section{Introduction}

We study the generating series for the dimensions of the components of algebraic operads.
For symmetric operads, we conjecture 
 that under mild restrictions these exponentional generating series are differential algebraic.
We prove that this is indeed the case if the operad has a finite Gr\"obner basis which satisfies  an additional condition. Moreover, we show that if the dimensions of the components of the operad are bounded by an exponential function, then the corresponding generating function is rational. For non-symmetric operads, we show that the ordinary generating series is  algebraic if the operad has a finite  Gr\"obner basis. Moreover, 
the series is a rational function if, in addition, the dimensions of the operad components  are bounded 
by a polynomial function. 
We also describe several algorithms for calculating the above series in various  situations,
 and provide a number of examples of calculations.
In particular, there are several
natural examples of operads for
which the generating series were not previously known.

\subsection{Main results}

Let $\p$ be a finitely generated operad over a field $\kk$ of characteristic zero.
Recall that an
exponentional generating series of $\p$ is defined as
\begin{equation}
\label{eq::E::gen::ser}
E_{\p} (z) := \sum_{n \ge 1} \frac{\dim \p(n)}{n!} z^n .
\end{equation}
We also consider the ordinary
generating series
\begin{equation}
\label{eq::G::gen::ser}
G_{\p} (z) := \sum_{n \ge 1} {\dim \p(n)} z^n .
\end{equation}
In particular,
if $\p$ is  a symmetrization of a non-symmetric operad $\P$, then $\dim\p(n) = n!\dim\P(n)$ so that 
$E_{\p} (z) = G_\P (z)$.

These generating series are important invariants of an operad. 
For example, they appear  in the Ginzburg--Kapranov criterion for Koszul operads. 
\cite{zin} gives examples of  such series. 
Moreover, the generating series for binary
symmetric operads with one or two generators  have been extensively studied for decades under the name of
{\em codimension series of varieties of algebras},
see~\cite{giza, bd}. These are essentially the series for quotients of the operad of associative algebras.

Here we present a new approach to such series
using the theory of Gr\"obner bases for operads developed
 in~\cite{dk}. Note that, in general, the theory of such Gr\"obner bases helps to answer a question: is a given operadic element equal to zero in an operad defined by a given collection of operadic relations? For example, the famous Jacobian Conjecture can be reformulated as a question of this kind~\cite[Section~2]{onYagzhev}.
A number of  operads admit a finite Gr\"obner basis of the ideal of relations; we refer them as operads with a finite Gr\"obner basis for short. In such operads, there is an effective direct algorithm to answer the above question.  We study  the generating series for the dimensions of the components of such operads.

Suppose an operad $\P$ admits a finite Gr\"obner basis $G$ of the ideal of its relations.
Let $\widehat G$ be the set of leading terms of the elements of $G$.
The operad $\widehat \P$ defined by  the same set of generators as $\P$ and the finite set of relations $\widehat G$ 
is called the monomial replacement of $\P$ and has the same generating series of dimensions as $\P$.
This observation reduces the problem of the describing the generating series of operads with finite Gr\"obner bases
to the same problem for operads with finite set of monomial relations. 
Thus, we prove the theorems on operads with finite Gr\"obner bases listed below for operads 
with a finite set  of monomial relations only. 

Our first result deals with {\em non-symmetric} operads.

\begin{theorem}[Theorem~\ref{th-nonsym}]
\label{th-nonsym-intro}
 The ordinary generating series of a non-symmetric operad with a finite Gr\"obner
 basis is an algebraic function.
\end{theorem}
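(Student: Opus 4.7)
The plan is to combine the Gr\"obner-basis reduction already recorded above with a Chomsky--Sch\"utzenberger style argument on trees. By the preceding paragraph, $G_\P(z) = G_{\hat\P}(z)$, so it suffices to prove the theorem for a non-symmetric operad $\hat\P$ presented by a finite set $S$ of monomial relations. A monomial in the free non-symmetric operad is a planar rooted tree whose internal nodes are labeled by generators of matching arity, and a basis of $\hat\P(n)$ is given by such trees with $n$ leaves that avoid every element of $S$ as a subtree pattern.

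Let $d$ be one more than the maximum depth of elements of $S$. I would take as \emph{states} the finite collection of rooted planar fragments $\tau$ of depth at most $d-1$ that are themselves normal (avoid $S$), and for each such $\tau$ with $k(\tau)$ ``active'' leaves (those grafted onto internal nodes of maximal depth in $\tau$) define $T_\tau(z)$ to be the generating series of normal trees whose top $d-1$ levels coincide with $\tau$. Every normal tree is either a single leaf or decomposes uniquely as a state $\tau$ with normal subtrees grafted at its active leaves. Admissibility of a grafting is a purely local check depending only on $\tau$ and the top fragments of the grafted subtrees, because any pattern from $S$ involved lives entirely within the top $d$ levels of the combined tree. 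This yields a finite polynomial system
\[
T_\tau(z) \;=\; z^{a(\tau)} \sum_{(\tau_1,\dots,\tau_{k(\tau)})\text{ admissible}} \prod_{i=1}^{k(\tau)} T_{\tau_i}(z),
\]
with nonnegative integer coefficients, and $G_{\hat\P}(z)$ is a finite sum of the $T_\tau(z)$ after bookkeeping for the small trees of total depth $< d-1$.

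Each nonlinear term on the right contains at least one positive power of $z$, so the system is contracting on $z\,\kk[[z]]^N$ and admits a unique formal power-series solution. Iterated resultants then eliminate the auxiliary unknowns $T_\tau$ and produce a single polynomial relation satisfied by $G_{\hat\P}(z)$ over $\kk(z)$, proving algebraicity. Equivalently, the set of normal trees is a regular tree language, and the fact that its leaf-generating series is algebraic is the tree-language form of Chomsky--Sch\"utzenberger.

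The main obstacle is organizational rather than conceptual: one must choose the notion of state so that (i) the system is genuinely finite and polynomial, (ii) the admissibility predicate catches \emph{every} forbidden pattern via a local check, and (iii) the system is proper, so that the implicit unique-solution step goes through. The delicate points are the treatment of trees of total depth $<d-1$ and the precise specification of ``active leaf,'' since some leaves of a state fragment may in fact be genuine leaves of the whole tree and contribute a factor of $z$ rather than a nested $T_{\tau_i}$. Once this bookkeeping is in place, algebraicity of $G_\P(z)$ is immediate from the elimination step.
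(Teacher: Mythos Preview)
Your proposal is correct and follows the same overall strategy as the paper: reduce to the monomial replacement $\hat\P$, partition normal trees by their bounded-depth top fragment (the paper calls this the \emph{stump}), obtain a finite polynomial system among the corresponding generating series, and then eliminate to a single algebraic relation. The paper even cites the Chomsky--Sch\"utzenberger analogy you invoke.

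The one substantive difference is the direction of the recursion. You graft at the \emph{bottom} of the state $\tau$, so that $T_\tau$ is a polynomial of degree $k(\tau)$ (the number of active leaves, potentially exponential in $d$) in the $T_{\tau_i}$. The paper instead peels off only the \emph{root} generator: writing a nonzero monomial as $\mu(v_1,\dots,v_n)$, it observes that the stump of this composition is determined by $\mu$ and the stumps of the $v_i$, yielding
\[
\widetilde{\M_{b_j}} \;=\; \bigcup_{j_\mu(i_1,\dots,i_n)=j}\mu\bigl(\widetilde{\M_{b_{i_1}}},\dots,\widetilde{\M_{b_{i_n}}}\bigr),
\]
so each equation has degree equal to the arity of a generator. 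This buys the paper two things: the equations are of uniformly low degree (giving an explicit bound $\deg Q\le(\prod d_i)^2$ via Bezout and effective Nullstellensatz, their Lemma~\ref{lem:alg_elimination}), and properness is automatic---the Jacobian at the origin is the identity. Your bottom-grafting recursion is equally valid but requires exactly the active-leaf/genuine-leaf bookkeeping and propriety check you flag; in particular the off-by-one between ``depth at most $d-1$'' and ``top $d-1$ levels'' needs to be pinned down so that the locality claim for pattern-checking goes through.
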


In the more general symmetric case,
the analogous result is true under the additional assumptions explained below.

By definition~\cite{dk}, the leading terms of the elements of a reduced Gr\"obner
basis $G$ of a symmetric operad are {\em shuffle monomials}, that is,
rooted trees with an additional labelling of vertices and leaves.
Each shuffle monomial
has a unique planar  representative (see details in
Section~\ref{sec::sym_background}), so we can identify a shuffle monomial
with a planar tree whose leaves are enumerated by an  initial segment of positive integers
permuted by a so-called {\em shuffle substitution}. We  call a set $M$ of
shuffle monomials {\em shuffle regular}
(Definition~\ref{def:shuffle_regular}) if for each shuffle
monomial $m$ in $M$
and for each shuffle substitution $\sigma$ of
its leaves, the monomial $m'$ obtained from $m$ by acting with $\sigma$
on its leaves also belongs to $M$. An operad with a given
Gr\"obner basis $G$  is called {\em shuffle regular} if the
set  $\widehat G$  
of the leading monomials of the elements of the Gr\"obner basis
is shuffle regular.

Recall  that a
 function or
a formal power series
is called {\em differential
algebraic} if it satisfies a non-trivial  algebraic differential
equation with polynomial coefficients.

\begin{theorem}[Corollary~\ref{cor:lin_syst}]
\label{th:main_sym_intro} Let $\p$ be a shuffle regular symmetric
operad with a finite Gr\"obner basis. Then its
generating series $E_{\p}$ is differential algebraic.
\end{theorem}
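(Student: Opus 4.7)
The plan is to reduce the problem to the monomial case and then set up a finite system of algebraic differential equations for a family of auxiliary exponential generating series. Since $\p$ admits a finite Gr\"obner basis $G$, its monomial replacement $\hat\p$ has the same exponential generating series, so it suffices to treat operads defined by a finite set $\hat G$ of forbidden shuffle monomials. By the shuffle-tree realization of \cite{dk}, a $\kk$-basis of $\hat\p(n)$ is indexed by shuffle trees of arity $n$ avoiding every element of $\hat G$ as a subtree. Shuffle regularity of $\hat G$ is exactly the statement that the set of forbidden subtrees is closed under shuffle substitutions of leaves, so ``avoiding $\hat G$'' is an intrinsic property of the shuffle-tree species rather than of a particular labeling.

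The main step is a root decomposition. Any admissible shuffle tree splits into its root generator $g$ of some arity $k$ grafted with an ordered tuple of admissible subtrees, whose leaf-label partition satisfies the shuffle condition on minima. Because elements of $\hat G$ may extend several levels below the root, I introduce a finite collection of auxiliary species $\p_{g,s}$, indexed by the root generator $g$ and a ``state'' $s$ recording the bounded local data near the root needed to detect every element of $\hat G$. Shuffle regularity guarantees that this state set is finite: permuting leaves inside a candidate forbidden pattern does not create new obstructions, so the state depends only on unlabeled tree shape and choice of generators, both drawn from finite sets. Assembling the root decompositions for each pair $(g,s)$ yields a closed system of functional equations at the species level.

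Translating to EGFs, ordered grafting of subtrees becomes multiplication of generating series, while the shuffle condition on leaf minima translates into operations involving the derivation $\partial_z$: selecting the block that contains the smallest remaining leaf at each level amounts to differentiating and integrating in $z$, in the standard species-to-EGF dictionary. This produces a finite system of polynomial differential equations in the unknowns $E_{\p_{g,s}}(z)$ with polynomial coefficients. Differential elimination applied to this system yields a single non-trivial algebraic differential equation satisfied by every $E_{\p_{g,s}}$, and hence by the finite sum $E_\p = \sum_{g,s} c_{g,s}\, E_{\p_{g,s}}$ that recovers the full generating series.

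The main obstacle is the finiteness of the state set $\{s\}$ together with the reduction of the shuffle condition on leaf labels to finitely many polynomial differential operations on EGFs. This is precisely where shuffle regularity is used in an essential way: without it, distinct shuffle permutations of a forbidden pattern would have to be tracked separately and the resulting unknowns would form an infinite family, from which a single algebraic differential equation could not be extracted by elimination. Once the state set is known to be finite, the remaining work is routine bookkeeping in the species-to-EGF dictionary plus a standard differential-algebra elimination argument.
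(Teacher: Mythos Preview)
Your proposal is correct and follows essentially the same strategy as the paper: your ``states'' are precisely the paper's \emph{stumps} (shuffle skeletons of bounded-level left divisors), your species-to-EGF translation of the shuffle condition is made explicit in the paper as the operator $C(f,g)=\int_0^z f'(w)g(w)\,dw$ of Lemma~\ref{l:hilbert_shaf_com}, and you use shuffle regularity for exactly the reason the paper does---so that admissibility after grafting depends only on the planar skeletons of the subtrees, closing the recursion over a finite index set. The one noteworthy technical difference is the final step: you invoke differential elimination (Ritt--Kolchin) on the finite ODE system, whereas the paper instead appeals to Artin's Approximation Theorem for differential equations~\cite{dl} to conclude that the unique formal power series solution coincides with a differentially algebraic one; both routes are valid, and yours is arguably the more direct.
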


We also consider two special classes of operads which give rise to a generating series of a more special form.
The first class consists of monomial shuffle regular operads 
whose relations satisfy an additional symmetry condition.
Namely, {\em if the set of relations of a monomial operad $\p$ forms a set of all  planar representatives of a given set of non-planar trees,
then the generating series $E_{\p}$ is algebraic} (Theorem~\ref{th::bosquet}).
The second class is defined by the following bounds for the dimension growth
of  the components of the operad.

\begin{theorem}[Corollaries~\ref{cor::nonsym_constant_growth} and~\ref{cor::sym_constant_growth}]
\label{th:intro_slow_growth}
Let $\p$ be an operad with a finite Gr\"obner basis.
Suppose that either \\
$\phantom{1111}(i)$ $\p$ is non-symmetric and the numbers $\dim \p(n)$ are bounded by some polynomial
in $n$\\
or\\
$\phantom{1111}(ii)$ $\p$ is shuffle regular and the dimensions $\dim \p(n)$  are bounded by an exponential function
$a^n$ for some $a>1$.\\
Then  the generating series $G_\p$ is rational.
\end{theorem}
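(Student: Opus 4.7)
The strategy is to combine the slow-growth hypothesis with the algebraic (resp.\ differential-algebraic) structure of the generating series furnished by Theorems~\ref{th-nonsym-intro} and~\ref{th:main_sym_intro}, and argue that this combination forces the series to collapse to a rational function. As a common preliminary step in both cases, I would pass from $\p$ to its monomial replacement $\hat\p$: since $\p$ has a finite Gr\"obner basis, $\hat\p$ is monomial with finitely many relations, has the same generating series as $\p$, and inherits shuffle regularity (since shuffle regularity is defined on the leading term set). Normal forms of $\p$ are then tree monomials---planar in case (i), shuffle in case (ii)---avoiding a fixed finite set of forbidden patterns, and $G_\p(z)$ is their ordinary generating series.

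\textbf{Case (i).} Theorem~\ref{th-nonsym-intro} gives that $G_\p(z)$ is an algebraic formal power series with nonnegative integer coefficients. I would then invoke the classical principle that an algebraic power series in $\N[[z]]$ whose coefficients are bounded by a polynomial in $n$ is necessarily rational. This follows from the singularity analysis of algebraic functions: the coefficient asymptotics produced by the Puiseux expansions at dominant singularities are of the form $c\, n^{\alpha}\rho^{-n}$ with $\alpha\in\QQ$, and the combination of integrality, nonnegativity, and polynomial growth forces only integer exponents and $|\rho|\ge 1$, so only poles can occur and $G_\p$ is rational. A self-contained combinatorial alternative is available: the tree decomposition of normal forms expresses $G_\p$ as the least nonnegative solution of a polynomial (context-free) system; polynomial coefficient growth rules out genuinely quadratic branching in that system, reducing it to a right-linear one whose solution is rational.

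\textbf{Case (ii).} I would pass to the shuffle operad $\p^{\sh}$, whose components have the same dimensions as those of $\p$; its normal forms are shuffle tree monomials avoiding the finite forbidden set. Shuffle regularity means precisely that this forbidden set depends only on the underlying planar colored shape and not on the particular shuffle labeling of the leaves. Hence
\[
\dim\p(n)=\sum_\tau N(\tau),
\]
where $\tau$ ranges over allowed planar colored shapes with $n$ leaves and $N(\tau)$ counts the admissible shuffle leaf labelings of $\tau$. The key combinatorial input is a lower bound: whenever $\tau$ contains many internal vertices of arity $\ge 2$, $N(\tau)$ grows faster than any exponential in the number of leaves. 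The hypothesis $\dim\p(n)\le a^n$ then severely restricts the allowed shape types, forcing them to consist of unary chains together with a uniformly bounded amount of branching decoration. This restricted class of shape types, together with the admissible labelings, can be recognized by a finite automaton, so the ordinary generating series $G_\p$ is rational.

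\textbf{Main obstacle.} The central technical difficulty lies in case (ii): one must establish the precise quantitative lower bound on $N(\tau)$ for shapes with enough branching, and translate the resulting structural restriction on the allowed shape types into an explicit regular-tree-language description whose generating function is manifestly rational. Case (i) is, by comparison, almost immediate once the general algebraic-implies-rational principle for series of polynomial growth is in hand; the real content is concentrated in the shuffle regular symmetric setting.
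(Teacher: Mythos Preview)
Your treatment of case~(i) is essentially fine, and in fact your ``combinatorial alternative'' is precisely the paper's argument: the paper works with the polynomial system~(\ref{eq::nonsym_system_eq}) with nonnegative coefficients from Theorem~\ref{th:nonsym_mon_difurs}, shows (via a GK-dimension analysis, Lemmas~\ref{lem::alg_func_coef} and~\ref{lem::sum_GK_dim}) that subexponential growth of all the $y_i$ forces each equation to be \emph{linear} in the variables of top GK-dimension, and then solves the resulting linear system to get rationality. Your first route---algebraicity of $G_\p$ plus polynomial growth plus integer coefficients implies rationality---is also valid, but the correct mechanism is the P\'olya--Carlson theorem (an integer power series of radius $1$ is rational or has the unit circle as natural boundary; algebraic series have finitely many singularities), not the Puiseux-exponent argument you sketch, which does not by itself force integer exponents.

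Case~(ii) has a genuine gap. Your key combinatorial input, ``whenever $\tau$ contains many internal vertices of arity $\ge 2$, $N(\tau)$ grows faster than any exponential in the number of leaves,'' is false as stated: the left comb binary tree on $n$ leaves has $n-1$ binary internal vertices but exactly one shuffle labeling. What forces superexponential growth is not the mere presence of many branching vertices, but the presence of many branching vertices at which \emph{both} outgoing subtrees are nontrivial (large). This distinction is exactly what the paper's proof isolates, and it does so not at the level of individual shapes but at the level of the integral system~(\ref{eq::sym_system_eq}) from Theorem~\ref{th:sym_mon_difurs}: one builds a dependence graph among the stumps, shows (Lemma~\ref{lem::edge_growth}, Corollary~\ref{cor::wheel_growth}) that a \emph{nonlinear} dependence occurring on a directed cycle would force the coefficients of some $y_b$ to grow at least like $(n/A)^{n/B}$, contradicting the subfactorial hypothesis, and then concludes by induction that the system reduces to a linear one with rational right-hand sides (using Lemmas~\ref{lem:rat=dif-lin} and~\ref{lem::exp_to_rational} to stay within the class corresponding to rational $G$). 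Your proposed finite-automaton description of the allowed shapes, even corrected, would amount to re-deriving this linearization by hand; the paper's route through the system is both cleaner and already available from the proof of Theorem~\ref{th:main_sym_intro}.
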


In fact, the  growth conditions we need in Theorem~\ref{th:intro_slow_growth} are even weaker, 
see Corollaries~\ref{cor::nonsym_constant_growth} and~\ref{cor::sym_constant_growth}.

All our proofs are constructive and provide a number of methods of obtaining  the corresponding
algebraic or differential algebraic equations. Depending on the situation  one of these may be more effective.
They are based on relations between right sided ideals in monomial operads, 
on homological computations, and on using symmetries of the relations of operads. 
Each method is illustrated by a number of examples.

\begin{rema}
Sometimes there is an additional integer grading on the operad $\p$ such that all vector spaces $\p_n$ are graded, 
and such that the grading is additive with respect to the compositions of homogeneous elements.
Then one can also consider the two-variable generating functions $G_{\p} (z,t)$ and $E_{\p} (z,t)$,
 see Sections~\ref{sec::nonsym_grading} and~\ref{sec::sym_add_grading}.
Theorems~\ref{th:main_sym_intro},~\ref{th-nonsym-intro},~\ref{th:intro_slow_growth} 
remain valid after replacing the coefficient  field $\QQ$
in the differential and algebraic equations  by the ring $\QQ[t]$.
\end{rema}

\subsection{Outline of the paper}

In Section~\ref{sec::recall}, we give a brief introduction to algebraic operads and
  Gr\"obner bases.
Subsections~\ref{sec::nonsym_background} and ~\ref{sec::sym_background}
contain a detailed descriptions of monomials in free non-symmetric and symmetric operads respectively.

 In Section~\ref{sec::nonsym}, we illustrate our ideas in the simpler case
  of non-symmetric operads. 
In Subsection~\ref{sec::many_relations}, we begin to prove Theorem~\ref{th-nonsym-intro} 
by  finding an algebraic relation for the generating series of an arbitrary non-symmetric operad
with a finite Gr\"obner basis.
Our proof is based on considering the principal ideals with bounded degree  of generators
in a finitely presented monomial operad. We provide an algorithm to construct a system of polynomial relations 
of the form $y_i = F_i(y_1, \dots, y_N)$ 
for the generating functions $y_1, \dots, y_N$ of these ideals.
In Subsections~\ref{sec::few_relations} and~\ref{sec::homology_algorithm},
we give two other versions of this algorithm optimized for  few relations 
and for a simple structure of relations, respectively.
Since the generating series of the operad itself is a linear combination of some $y_i$,
 we deduce in Subsection~\ref{sec:alg_eq_nonsym} that the latter generating series satisfy an algebraic equation. 
Then we discuss a  bound for the degree of this equation.
In Subsection~\ref{sec::nonsum_const_growth}, we investigate the case of operads with subexponential growth of the dimensions 
of its components and prove the first part of Theorem~\ref{th:intro_slow_growth}.

In Section~\ref{sec::sym_operads}, we deal with symmetric operads.
In Subsection~\ref{sec::operator_C}, we prove a formula for the generating series of the shuffle composition of sequences of linear subspaces.
 This is the first place where integral and differential equations appear.
In Subsection~\ref{sec::sym_main_theorem}, we prove Theorem~\ref{th:main_sym_intro}.
In Subsection~\ref{sec::sym_bosquets},
we consider
 a  class of shuffle regular operads with additional symmetries of the relations.
For such an operad (called {\em symmetric regular}), the set of leading monomials of the Gr\"obner basis
 is, by definition, closed
 under an arbitrary re-numbering of the leaves.
We show that the exponential generating series of the symmetric regular operads are algebraic functions.
In Subsection~\ref{sec::sym_constant_growth},
we consider shuffle regular operads with slow growth of the dimensions of the components and prove the shuffle part of
Theorem~\ref{th:intro_slow_growth}.
The  examples of symmetric operads are collected in Subsection~\ref{sec::examples:sym}.

In Section~\ref{sec::final_remarks} there are additional remarks and conjectures.
In Subsection~\ref{subs:motiv}, we discuss an analogy of operadic generating functions and the Hilbert series of associative graded algebras. 
We observe that many common algebras have rational Hilbert series and formulate three informal 
conjectures on the generating functions of common finitely presented operads.
These expectations generalize our theorems. 
In Subsection~\ref{subs:conj_PBW}, we give some evidence for the conjecture that each binary operad with a quadratic Gr\"obner basis
has the same generating series as some shuffle regular operad (hence, this series is differential algebraic).
In Subsection~\ref{subs:conj_ass}, we discuss 
some evidence showing that  operads of associative algebras with polynomial identities are ``generic''. After this paper was sibmitted, Berele
has proved that these operads indeed have holonomic (hence, differential algebraic) generating series~\cite{berele}. 
 Third, we remark that the operation of shuffle composition induces the structure of Zinbiel algebra on the ring
of formal power series. Therefore, it is interesting to describe the minimal Zinbiel subalgebras which contain the generating series of some classes of operads, and to describe the class of ``Zinbiel algebraic'' functions which contains these generating series.

\subsection{Considered examples.}
We illustrate our methods by a number of examples. For the reader's convenience, 
we give below a brief list of examples with references to the text.
In most of these examples, the operads are generated by a single binary operation (multiplication).
As usual, we denote $[a,b] := ab-ba$, $(a,b,c):=(ab)c-a(bc)$ and $[a,b]_{+}:=ab+ba$.
These examples are:

\begin{itemize}

 \item
 a non-symmetric operad with the identities $((a b) c) d) =0 \text{ and }  (a(b((cd)e))) = 0$
(Example~\ref{eq::Asw::rec:rel});

 \item
  non-symmetric operad $\Q_k$ with generalized associativity relation
\\
$
x_1(\dots ( x_{k-2}(x_{k-1}, x_k, x_{k+1})) \dots )=0
$ (Example~\ref{ex::nonsym_homol_exam});

 \item
(Example~\ref{ex:alia})
 operads of alia algebras, left alia algebras and right alia algebras~\cite{dzh},
that is, the operads generated by a single binary nonsymmetric operation defined by the identity
$$
\begin{array}{cc}
\text{ (alia algebras) } & [[x_1, x_2], x_3]_{+} + [[x_2, x_3], x_1]_{+} + [[x_3, x_1], x_2]_{+} = 0, \\
\text{ (left alia algebras) } & [x_1, x_2]x_3 + [x_2, x_3]x_1 + [x_3, x_1]x_2 = 0,\\
\text{ (right alia algebras) } &
x_1[x_2, x_3] + x_2[x_3, x_1] + x_3[x_1, x_2] = 0.
\end{array}
$$

 \item
 operad ${\mathcal {NU}}_n$ of upper triangular matrices of order $n$
over a nonassociative commutative ring (Example~\ref{ex::2x2-upper-matrices} for $n=2$,
Example~\ref{exam::NU3} for $n=3$ and Lemma~\ref{lem:upper-triangle-define} for the general case).

 \item
 a Lie-admissible  operad  defined by the set of identities $\{g([-,-],\ldots,[-,-])\}$
 where the skew-symmetric generator $[-,-]$
satisfies the Jacobi identity and $g$ is a collection of linear combinations of compositions of other generators.
(Example~\ref{ex::sym_homol_exam});

 \item
 an operad with the identities $
[x,[y,z]]+[y,[z,x]]+[z,[x,y]] = 0
$
and
$
[x,y][z,t] + [z,t][x,y] =0
$
(Example~\ref{ex::hom_exam_3}, as a special case of the previous example).
\end{itemize}

\subsection*{Acknowledgments}

We are grateful to Vladimir Dotsenko, Askar Dzhumadil'daev, Samuel Grushevsky and Boris Shapiro for stimulating discussions
and comments on the exposition.
An essential part of the paper was written while the first author was working at the ETH and
he wishes to thank  all the staff of ETH and especially
Giovanni Felder who in addition helped to organize a short visit of the second author to ETH.

\section{Operads and trees}

In the brief and rough explanation below, we refer the reader to the
books~\cite{Loday_Valet, oper} for the details on the basic facts on operads and to~\cite{dk} and~\cite[Ch.~8]{Loday_Valet}  for the details
on shuffle operads and Gr\"obner bases in operads.

\label{sec::recall}

\subsection{Operads in few words}

Roughly speaking, a (linear) operad is a way to define a type, or a variety, of linear algebraic systems.  
A linear operad $\P$ consists of a collection (or a disjoint union)
of vector spaces $\P(n)$  of all multilinear operations
(numbered by the amounts of inputs) and multilinear 
operations $\circ_i : \P(n)\otimes \P(m) \to \P(n+m-1)$ called compositions\footnote{Note that the compositions  $\circ_i$ are referred as {\em partial compositions} in~\cite{Loday_Valet}.
The term {\em composition} here has been used by Gerstenhaber, see~\cite[I.1.4]{oper}.} which prescribe how to substitute
into the $i$-th input of an operation from $\P(n)$  the result of an operation from $\P(m)$.
The details are discussed below.
All operads we consider are linear. 
The set $\P(n)$ is identified with all possible $n$-linear operation of the algebraic system. The algebraic system itself is then called an algebra over the operad $\P$.

One typically separates two versions of this notion
 (the so-called symmetric and non-symmetric operads)
depending on whether we allow and do not allow to 
permute the inputs of operations.
The permutations  of inputs define the action of symmetric groups on operations and compositions,
hence, the direct computations becomes much harder in the symmetric case.
On the other hand, the symmetric operads are much more important for applications.
The general theory of non-symmetric operads is similar but more transparent,
since one needs not take care of
the symmetric group action.

Given a discrete set $\Upsilon = \cup_{n\geq 1} \Upsilon_n$ of (abstract) multilinear operations, where
$ \Upsilon_n$  consists of the $n$-ary operations, one can define the {\em free}  operad
$\F = \F(\Upsilon) = \cup_{n\geq 1} {\F}(n)$ generated by $\Upsilon$.
 Here ${\F}(n)$ is the vector space spanned by
all $n$-ary operations which are {\em compositions} of the elements of $\Upsilon$
 with each other.
The composition of the empty set of operations is assumed to be a unary identity operation $Id\in\F(1)$.
Note that in the symmetric case
a `composition' may permute the inputs, so that the set of $n$-ary generators $\Upsilon_n$ should span a representation of the symmetric group.
 To help the reader to separate the non-symmetric and symmetric cases, we
will usually denote the corresponding free operads by $\F(\Upsilon)$ and $\f(\Upsilon)$, using script and calligraphic fonts respectively.

Each operad can be defined in terms of generators and relations.
Therefore,
each operad may be considered as a quotient of a free operad.
That is why we first describe linear bases in the free operads
(referred to as {\em monomial bases})
which are compatible with compositions of operations.
The difference between the symmetric and the non-symmetric cases is essential already on this level.
Namely, the definition of  monomial basis for non-symmetric operads is natural whereas
to define monomials for symmetric operads we need an additional structure of shuffle operad introduced in~\cite{dk}.
We discuss the definitions and combinatorics of non-symmetric and shuffle monomials in the forthcoming subsections.

\subsection{A basis for a free non-symmetric operad: tree monomials}
\label{sec::nonsym_background}
Let us recall the combinatorics of trees involved in the description of monomials in operads.

Basis elements of the free operad are represented by decorated trees.
By a (rooted) \emph{tree} we mean a non-empty connected oriented graph $T$ of genus zero for which
each vertex has at least one incoming edge and exactly one outgoing edge. Some edges  of a tree  have 
a vertex at one end only. Such edges are called \emph{external}. All other edges (having
vertices at both ends) are called \emph{internal}. Each tree has one outgoing external edge
(= the \emph{output} or the \emph{root}) and several ingoing external edges, called \emph{leaves} or \emph{inputs}.
The number of leaves of a tree $v$ is called the \emph{arity} of the tree and is denoted by $ar(v)$.

Consider the set of generators $\Upsilon = \cup_{n\geq 1} \Upsilon_n$
of the free operad $\F(\Upsilon)$.
We mark a tree with a single vertex and $n$ incoming edges by an arbitrary element of $\Upsilon_n$
and we call such tree a \emph{corolla}.
We say that a rooted tree $T$ \emph{is internally labeled} by the set $\Upsilon$
if each vertex is labeled by an element from $\Upsilon$
such that the number of inputs in this vertex coincides with the arity of the corresponding operation.

A rooted tree is called \emph{planar} if the ordering of incoming edges 
in each vertex is chosen.
(Therefore, there is a canonical way to project a tree on a plane.)

\begin{proposition-definition}\label{lem::basis_in_nonsym}
 A canonical basis of a free non-symmetric operad $\F(\Upsilon)$ is enumerated by the set $\B(\Upsilon)$
of all planar rooted trees
internally labeled by the elements of the set $\Upsilon$.
We shall refer to the elements of this basis as \emph{planar tree monomials}.
\end{proposition-definition}

\begin{wrapfigure}{r}{2.2in}
\caption{A planar tree monomial}
\includegraphics{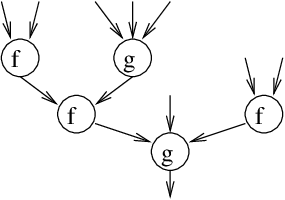}
\label{pic::nonsym::tree}
\end{wrapfigure}

There are two standard ways to think about elements of an operad in terms of its generators.
The first way is in terms of tree monomials  represented by planar trees and the second one is in terms of compositions of operations presented by formulas with brackets.
Our approach is somewhere in the middle: in most cases, we prefer  to think about tree monomials, but to write formulas required for definitions and proofs in the language of operations since it makes things more compact.

A particular example of a planar tree monomial is presented in Figure~\picref{pic::nonsym::tree}.
The corresponding operation may be written as follows:
$g(f(f(\textrm{-},\textrm{-}),g(\textrm{-},\textrm{-},\textrm{-})),\textrm{-},f(\textrm{-},\textrm{-}))$.
It is also convenient to use variables and letters for the inputs of operation.
In a non-symmetric operad, we  enumerate the inputs from the left to the right, 
so that the input variables are (from the left to the right) $x_1,x_2,x_3,$\dots.
For example,
the operation from Figure~\picref{pic::nonsym::tree} will be written in the following form:
$$g(f(f(x_1,x_2),g(x_3,x_4,x_5)),x_6,f(x_7,x_8)).
$$

The composition rules for tree-monomials are given by the concatenation of trees.
The notion of divisibility is defined in the following way:
A monomial $v$ is \emph{divisible} by $w$ if there exists a planar subtree of $v$ isomorphic to $w$
as a planar labeled tree. (Notice that the root of $w$  need not to coincide with the root of $v$.)
For example, the tree-monomial that represents the operation $f(\textrm{-},g(\textrm{-},\textrm{-},\textrm{-}))$ is a divisor
of the tree-monomial given in Figure~\picref{pic::nonsym::tree}; the corresponding planar subtree which contains
the internal edge going from
the ternary operation $g$ in the upper level to the binary operation $f$ in the intermediate level.

Let us specialize two cases of divisibility. A monomial $w$ of some arity $m$ is a {\em left}
divisor of $v$ if $v = w(v_1, \dots, v_m)$ for some monomials $v_1, \dots, v_m$.
By the other words, $w$ is a left divisor of $v$ if there is a
 planar subtree  $w'$ in $v$ which is isomorphic to $w$ and has the same root as $v$.
 Similarly, a monomial $u$ of some arity $l$ is a {\em right} divisor of $v$ if $v = w(x_1, \dots,u(x_i, \dots, x_{i+l-1}), x_{i+l}, \dots, x_{{l+m-1}})$ (where $u$ is in $i$th place) for some monomial $w$ of some arity $m$.
 By the other words, $u$  is a { right} divisor of $v$ if
 all leaves of the subtree $u'$ in $v$ isomorphic to $u$ are also leaves of $v$.
 For example, the empty monomial and the monomial $v$ itself are both left and right divisors of $v$.

\subsection{A basis in  free symmetric operad: shuffle monomials}

\label{sec::sym_background}

A free symmetric operad may  have no monomial basis closed under with all possible compositions.
The way to avoid this problem is to construct a basis closed under  {\em some} compositions.
A class of compositions we would like to preserve is called {\em shuffle compositions}, see below for the definition.
A collection of all multilinear operations with the prescribed rules for shuffle compositions
form a \emph{shuffle} operad~(\cite{dk}; see also~\cite[8.2]{Loday_Valet}).
Any symmetric operad $\p$ may be considered as a shuffle operad denoted by $\p^f$.
Reversely, one can recover from $\p^f$ quite a lot enumerative data associated with $\p$.
In particular, the generating series of the operads $\p$ and $\p^{f}$ are the same.
Since the  symmetric operads are more important for applications than the shuffle ones, 
all examples of shuffle operads discussed below are examples of symmetric operads considered as shuffle operads.

Let us describe a basis
in the free symmetric operad $\f(\Upsilon)$ compatible with the shuffle compositions.
The elements of the basis will be called {\em shuffle monomials}.\footnote{Note that the term {\em tree monomial} has been used in~\cite{dk} for the elements of this basis. 
We use the notion {\em shuffle monomial} to separate the case of a shuffle operad from the case of a non-symmetric one.}
The main difference from the non-symmetric world is that one should have a labeling of the external edges
(i.e., a shuffle monomial is a non-symmetric monomial with a particular ordering of inputs).
We say that a rooted tree with $n$ incoming edges (=leaves) has an \emph{external} labeling
if the set of all leaves is numbered by distinct natural numbers from $1$ to $n$.

\begin{proposition-definition}\label{lem::basis_in_sym}
 The basis of the free shuffle operad $\f(\Upsilon)$ generated by the set $\Upsilon$ is numbered by the set $\b(\Upsilon)$
of all rooted (nonplanar) trees internally labeled by the set $\Upsilon$ and having external labeling.
We shall refer to the elements of this basis as \emph{shuffle tree monomials} or simply \emph{shuffle  monomials}.
\end{proposition-definition}

More generally, we use a term~{\em shuffle tree} for a rooted tree with $n$ incoming edges (leaves) if the leaves are numbered by some distinct natural numbers (not necessary the numbers $1, \dots, n$).

Note that whereas the shuffle tree monomials and shuffle trees  are originally considered as abstract graphs,
 we will use their particular planar representatives  described below.

In general, an embedding of a rooted tree in the plane is determined by an ordering of inputs for each vertex. We assume that the inputs of each vertex are arranged from left to right. 
To compare two inputs of a vertex~$v$, we find for each of these inputs the minimal leaf  that one can reach from~$v$ via it. 
The input for which the minimal leaf is smaller is considered to be less than the other one.
Then assume that a shuffle tree is embedded into the plane in such a way that, for the root and for each  internal vertex of the shuffle tree, the inputs increase from left to right. 
We use the term~{\em canonical planar representative}~\cite[3.1]{dk} for such an embedding of a shuffle tree into the plane.
Following~\cite[8.2.3]{Loday_Valet}, we identify a  shuffle monomial or a shuffle tree
with its canonical planar representative.
Note that a canonical planar representative of a shuffle monomial is a planar tree whose leaves are enumerated by an initial segment of positive integers.
The enumeration of the leaves is defined by a substitution of a particular kind. So, we 
call  an enumeration of leaves of a planar tree monomial a  {\em shuffle substitution} if,
for the root and for each  internal vertex of the tree monomial, 
the inputs increase from left to right. Thus, the shuffle monomials are the planar tree monomials whose leaves are enumerated by the shuffle substitutions.\footnote{We are sorry to note that the term {\em shuffle substitution} is also used in~\cite{dk, Loday_Valet} with other meanings.}


Similarly to the non-symmetric case, there are two languages to think about the tree monomial.
We still recommend the reader to think about shuffle tree-monomials as labeled planar trees but most of the formulas
in examples are written in the more compact language of operations.
See Figure~\ref{pic::shuffle_monom} below for the comparison of these two languages, with the monomials
$g(f(f(x_1,x_3)$, $g(x_2,f(x_4,x_9)$, $g(x_5,x_6,x_{11}))),x_7,f(x_8,x_{10}))$, and
$ f(x_1,g(x_2,x_3,x_4))$ represented as trees.
\begin{wrapfigure}{r}{5.5in}
\caption{Divisibility of shuffle monomials}
\includegraphics{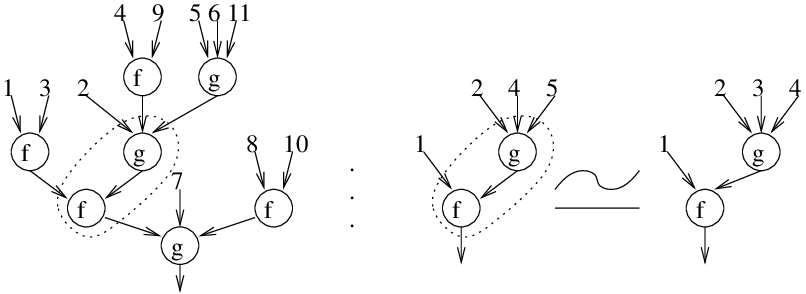}
\label{pic::shuffle_monom}
\end{wrapfigure}

Let us explain the notion of divisibility of shuffle trees.
Suppose that $u$ is a labeled subtree of  a shuffle tree $v$. 
Then one can consider $u$ as a shuffle tree if we assign to each input $i$ of $u$
the minimum $n_i$ of the leaves
of the subtree $v_i$ of $v$ grafted to $i$. 
We say that a monomial $v$ is divisible by a monomial $w$ if there exists a labeled subtree $u$ of $v$ which is isomorphic to $w$ as a shuffle tree, that is, 
$u$ is isomorphic to $w$ as a labeled tree and the isomorphism induces also the isomorphism of the ordered sets of leaves of $u$ and $w$. 

Figure~\ref{pic::shuffle_monom} provides an example of  divisibility of shuffle monomials.
In the  shuffle monomial represented by the left tree  we encircle a subtree with one internal edge.
This subtree is a divisor of the left monomial.
The tree in the middle is the encircled divisor where the leaves are numbered by the minima of the corresponding subtrees.
The right tree represents a shuffle monomial where we put the subsequent numbers on the leaves according to their local ordering.
Thus we see that the shuffle monomial
$$g(f(f(x_1,x_3),g(x_2,f(x_4,x_9),g(x_5,x_6,x_{11}))),x_7,f(x_8,x_{10}))$$
 is divisible by
$$ f(x_1,g(x_2,x_3,x_4)).$$

Let us define two particular cases of divisibility. We say that a
shuffle monomial $v$ is {\em right divisible} by a shuffle
monomial $w$ if $v$ is divisible by $w$ and, in the notation
above, all external vertices of the subtree $u$  are also external
as vertices of $v$. This means that there exists another shuffle
monomial  $t$ such that the monomial $v$ is obtained from a shuffle subtree 
$\widetilde t$ of $v$
isomorphic to $t$ by
replacing one of the  inputs  of $\widetilde t$ (say, the $m$-th) by a shuffle subtree
isomorphic to $u$. In this situation, we call the monomial $v$ a
{\em shuffle composition} of $t$ and $w$ and write $v = t
\circ_{m}^\sh w$. 
Contrary to the nonsymmetric case this notation is insufficient to
define the composition uniquely  because the former  should contain also 
information on ``shuffling'' substitution of the inputs of $t$ and
$w$~\cite[Prop.~2]{dk}.

Note that our shuffle compositions correspond to partial shuffle products 
defined in~\cite[8.2.5]{Loday_Valet}. Note also that our shuffle composition is a particular case of  shuffle composition defined in~\cite[8.2.6]{Loday_Valet} (with trivial all 
grafted trees but the $m$-th one).

Analogously, a
shuffle monomial $u$ (say, of arity $m$) is a {\em left} divisor of  $v$ if there is
a  labeled subtree $u$ inside $v$ with common root with $v$
which is isomorphic to $w$ as a labeled tree and with the local ordering
of the inputs with the same property as above.
In this case one can represent $v$ as a multiple shuffle composition
\begin{equation}
\label{eq:shuf_comp_intro}
v = (\dots (u
\circ_{m}^\sh v_m)
\circ_{m-1}^\sh v_{m-1}) \dots
\circ_{1}^\sh v_1)
\end{equation}
for some shuffle monomials $v_1, \dots, v_m$.
We denote this multiple composition simply by
$
u(v_1,\ldots,v_m)_{Sh}$.
Again, we will provide additional information on the permutation of inputs of $v$
whenever it is required.

\subsection{Monomial operads and Gr\"obner bases}

Let $\Upsilon$ be the set of {\em generators} of a non-symmetric or
shuffle operad $\p$, that is, each element of $\p$ is a linear
combination of compositions of elements of $\Upsilon$ with eachother.
Each space $\p(n)$ of $n$-ary operations has a
natural structure of a quotient of the component
$\f(\Upsilon)(n)$ of the (non-symmetric or symmetric) free operad $\f(\Upsilon)$
by some vector space $C(n)$. Then the suboperad $C=\cup_{n\ge
1}C(n)$ in $\f(\Upsilon)$, being a kernel of the natural surjection $\f(\Upsilon) \to
\p $, forms an ideal in $\f(\Upsilon)$ called the {\em ideal of
relations} of $\p$.
 The operad $\p$ is called {\em monomial} if each vector space $C(n)$ is spanned by monomials
(non-symmetric or shuffle, according to the type of the operad
$\p$). In this case, the ideal $C$
 is called monomial as well.
For a monomial operad $\p$, all monomials which do {\em not}
belong to $C(n)$ form a linear basis of $\p(n)$ for each $n$.

Given an order on monomials compatible with compositions
(for the discussion on admissible orderings, see~\cite[\S3.2]{dk}),
one can define a {\em leading monomial} for each element of the free operad.
For  the ideal $C$ of the relations of an operad $\p$ as above,
let $\widehat C$ be the span of the  leading monomials of all its elements.
Then $\widehat C$ is a monomial ideal such that the monomial basis of the quotient
operad $\widehat \p = \f(\Upsilon)/\widehat C $ is also a basis of $\p$ (if one identifies
the monomials in $\f(\Upsilon)$ with their images in $\p$).
For instance,  the dimensions of the corresponding components of $\p$ and $\widehat \p$
are the same, so that these two operads have the same generating series.
A {\em Gr\"obner basis}
of the ideal $C$ is a set of  elements of $C$ such that their leading monomials
generate the monomial ideal $\widehat C$~\cite[\S3.5]{dk}.
Gr\"obner basis gives a way to construct the monomial operad $\widehat \p$ starting from $\p$.
The operad $\widehat \p$ is called the \emph{monomial replacement} of $\p$ and has the same
generating series of dimensions as $\p$.

Consequently, if an operad $\p$ admits a finite  Gr\"obner basis
then the generating series of $\p$ is the same as the generating series of
the operad $\widehat \p$ defined by a finite set of monomial relations
(i.e., the leading terms of the elements of the Gr\"obner basis).
In this paper, we study the generating series of such operads $\p$.
The operads which admit a finite Gr\"obner basis include
PBW operads (\cite{hof},~\cite[Cor.3]{dk}),
operads coming from commutative algebras~\cite[\S4.2]{dk}
and many others. Some new nonquadratic examples are presented below.


\section{Nonsymmetric operads}

\label{sec::nonsym}

\subsection{Generating series and compositions}

\label{sec::nonsym_grading}

Suppose that the subset $\M\subset \B(\Upsilon)$ defines a monomial basis of a non-symmetric operad
$\P:=\F(\Upsilon)/(\Phi)$ (where $(\Phi)$ denotes the ideal  in the free non-symmetric
operad $\F(\Upsilon)$ generated by the set $\Phi\subset \F(\Upsilon)$ called the set of relations of $\P$).
The (ordinary) generating series of $\P$
is defined as the generating function of the dimensions of its components:
$$
G_{\P} (z) := \sum_{n\geq 1} \dim \P(n) z^{n} = \sum_{v\in \M} z^{ar(v)}.
$$

Moreover, if there exists an additional grading of the set of generators $\Upsilon$
such that all relations from $\Phi$ are homogeneous with respect to this internal grading, one can
consider in addition  a generating series with two parameters
$$
G_{\P} (z,t) := \sum_{n\geq 1} \dim_{t} \P(n) z^{n} = \sum_{v\in \M} t^{|v|}z^{ar(v)}
$$
where $\dim_t$ is the graded dimension.
We will omit the parameter $t$ if the internal grading is defined by the arity
of operations.
For example, if all generating operations have degree $1$ with respect to the internal grading and
have same arity $k>1$, then all operations of arity $n(k-1) +1$ should have the same grading $n$.
However, if there are unary generators or there are different generators of different arities 
the graded dimension $\dim_t$ becomes very important.

Let us consider in detail the generating series of one composition.
Namely, let $\mu$ be an $m$-ary  generator in the free operad $\F(\Upsilon)$
(i.e., $\mu \in {\Upsilon}_m$),  and let $\P_1, \dots, \P_m$  be some graded vector subspaces in $\F(\Upsilon)$.
\emph{A non-symmetric composition $\mu(\P_1,\ldots,\P_m)$ of vector spaces}  $\P_1, \dots, \P_m$
is the subspace of $\F(\Upsilon)$
whose $n$-th component is spanned by all possible compositions
$$
\mu(p_1,\ldots,p_m),\text{ where }p_i\in \P_i(k_i) \text{ and } \sum k_i = n.
$$

\begin{lemma}
 The generating series of a non-symmetric composition  $\mu(\P_1,\ldots,\P_m)$
is the product of generating series of vector spaces $\P_1,\ldots,\P_m$:
$$
G_{\mu(\P_1,\ldots,\P_m)}(z,t) = t^{|\mu|}\left( G_{\P_1}(z,t)\cdot\ldots\cdot G_{\P_m}(z,t)\right).
$$
\end{lemma}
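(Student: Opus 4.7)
The plan is to reduce the claim to a direct Cauchy-product computation on power series, using that composition in the free operad is injective on tuples of tree monomials.

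First I would fix, for each $i$, a homogeneous basis of $\P_i$ consisting of planar tree monomials (this is possible because $\P_i$ is a graded subspace of $\F(\Upsilon)$, which itself has a monomial basis by Proposition--Definition~\ref{lem::basis_in_nonsym}). Then, by the definition of the non-symmetric composition $\mu(\P_1,\ldots,\P_m)$, its $n$-th component is spanned by the elements $\mu(p_1,\ldots,p_m)$ where $p_i$ runs over the chosen basis of $\P_i(k_i)$ and $k_1+\cdots+k_m=n$. The key observation is that these spanning elements are linearly independent: grafting distinct tuples of planar tree monomials onto the corolla $\mu$ produces distinct planar tree monomials in $\F(\Upsilon)$, so the map $\P_1(k_1)\otimes\cdots\otimes\P_m(k_m)\to \F(\Upsilon)(n)$ given by $(p_1,\ldots,p_m)\mapsto\mu(p_1,\ldots,p_m)$ is injective. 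Hence
\[
\dim_t\,\mu(\P_1,\ldots,\P_m)(n) \;=\; t^{|\mu|}\!\!\sum_{k_1+\cdots+k_m=n}\! \dim_t\P_1(k_1)\,\dim_t\P_2(k_2)\cdots\dim_t\P_m(k_m),
\]
where the factor $t^{|\mu|}$ reflects the additivity of the internal grading under composition, i.e.\ $|\mu(p_1,\ldots,p_m)|=|\mu|+|p_1|+\cdots+|p_m|$.

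Multiplying both sides by $z^{n}$ and summing over $n\ge m$ (the smallest arity of a composition $\mu(p_1,\ldots,p_m)$ is $m$, since each $p_i$ has arity at least $1$) turns the inner sum into the $n$-th coefficient of the $m$-fold Cauchy product of the series $G_{\P_i}(z,t)$. This gives exactly the desired identity
\[
G_{\mu(\P_1,\ldots,\P_m)}(z,t)\;=\;t^{|\mu|}\,G_{\P_1}(z,t)\cdots G_{\P_m}(z,t).
\]

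There is essentially no obstacle: the only subtle point is the injectivity of the grafting map, which is immediate from the tree-monomial basis description of the free non-symmetric operad and the fact that the planar tree representing $\mu(p_1,\ldots,p_m)$ can be uniquely cut at the root vertex $\mu$ to recover the tuple $(p_1,\ldots,p_m)$. Everything else is bookkeeping of the two gradings (arity and internal weight).
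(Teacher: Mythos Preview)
Your argument is correct and follows essentially the same route as the paper: both rest on the observation that a tree monomial with root $\mu$ decomposes uniquely as $\mu(p_1,\ldots,p_m)$, giving a canonical isomorphism $\mu(\P_1,\ldots,\P_m)\simeq\P_1\otimes\cdots\otimes\P_m$ of bigraded vector spaces, from which the product formula for generating series is immediate. One small imprecision worth fixing: an arbitrary graded subspace $\P_i\subset\F(\Upsilon)$ need not admit a basis consisting of tree monomials (e.g.\ the span of a sum of two distinct corollas), so your first sentence overclaims; however this does no damage, since the grafting map is linear and injective on all of $\F(\Upsilon)(k_1)\otimes\cdots\otimes\F(\Upsilon)(k_m)$ and hence on any choice of subspaces, so you may simply take arbitrary homogeneous bases of the $\P_i$ and the rest of your computation goes through unchanged.
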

\begin{proof}
Each monomial in the free non-symmetric operad is a concatenation of the root vertex and the collection of 
monomials which correspond to the subtrees attached to the inputs of the root vertex.
Moreover, this presentation is unique, therefore there is a canonical isomorphism of graded vector spaces:
$\mu(\P_1,\ldots,\P_m) \simeq \P_1\otimes \ldots \otimes \P_m.$
\end{proof}

\subsection{System of equations for generating series}

\label{sec:nonsym_2ndproof}

We give here a result which is a key point of Theorem~\ref{th-nonsym-intro}.
It gives a system of algebraic equations which will lead (via elimination of variables) to the algebraic equation
for $G_{\P}$, see Subsection~\ref{sec:alg_eq_nonsym}.
The proof of this result given here contains some core algorithms
enabling computations in particular examples.

\begin{theorem}
\label{th:nonsym_mon_difurs}
 For a given non-symmetric operad $\P$ with a finite set of generators and a finite Gr\"obner basis there exist an integer $N$
 and a system of algebraic equations on $N+1$ functions $y_0 = y_0(z,t),$\ldots, $y_N = y_N(z,t)$
\begin{equation}
\label{eq::nonsym_system_eq}
y_i =  t^{a_i}\sum_{s\in [0..N]^{d_i}} q^i_s\cdot
     y_{s_1}\cdot \ldots\cdot  y_{s_{d_i}}            \phantom{qqq} \mbox{ for }i =1, \dots, N  ,
\end{equation}
such that $G_{\P}(z,t) = \sum_{i=0}^{N}y_i(z,t)$, $y_0 = z$ and $y_i(0,t)=y_i(z,0)=0$, $i=1\ldots N$.
The numbers $q^i_s\in \{0,1\}$  and the nonnegative integers ${d_i},{a_i}$ and $ N $
are bounded from above by some functions of the degrees and
the numbers of generators and relations of the operad $\P$.
\end{theorem}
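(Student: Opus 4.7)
The plan is to reduce to the monomial case and then build a ``tree automaton'' whose state generating functions satisfy the required system.

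First, since $\P$ admits a finite Gr\"obner basis $G$, I replace $\P$ by its monomial replacement $\widehat\P=\F(\Upsilon)/(\widehat G)$; the latter has the same generating series as $\P$ and is generated by the same $\Upsilon$. So I shall assume that $\P$ itself is a non-symmetric monomial operad with finite relation set $R\subset\B(\Upsilon)$, a linear basis of $\P$ being the set of planar labeled tree monomials in $\B(\Upsilon)$ not divisible by any $r\in R$.

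Next I will introduce the automaton states. Let $d$ be the maximum depth over $r\in R$. For an allowed monomial $v$, define its \emph{silhouette} $\sigma(v)$ by truncating $v$ at depth $d$: keep every vertex of depth $<d$ with its label and planar structure, mark each subtree hanging from a depth-$d$ cut by an open placeholder, and keep those genuine leaves of $v$ that lie at depth $\le d$ as ordinary inputs (distinguished from the open placeholders). Let $S$ be the finite set of all such silhouettes over $\Upsilon$ containing no $r\in R$ as a planar-labeled subtree, and enumerate $S=\{0,1,\dots,N\}$ with $0$ the single-leaf silhouette. Setting
\[
y_i(z,t) \,:=\, \sum_{\substack{v \text{ allowed}\\ \sigma(v)=i}} t^{|v|}\,z^{ar(v)},
\]
one has $y_0=z$, the partition of allowed monomials by their silhouette gives $G_\P=\sum_{i=0}^N y_i$, and since any tree contributing to $y_{i\ge 1}$ has at least one internal vertex and at least one leaf (assuming, as is standard, positive internal degrees on the generators), the normalisations $y_i(0,t)=y_i(z,0)=0$ hold.

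Then I will read off the recursion. For $i\ne 0$ the silhouette $i$ has a unique root label $\mu\in\Upsilon_{d_i}$ and depth-$\le d{-}1$ child silhouettes $\tau_1,\dots,\tau_{d_i}$ obtained by inspecting $i$ one level below its root. An allowed monomial $v$ with $\sigma(v)=i$ is uniquely $v=\mu(v_1,\dots,v_{d_i})$, with each $v_j$ allowed and with the depth-$(d{-}1)$ truncation of $\sigma(v_j)$ equal to $\tau_j$. Putting $q^i_s=1$ when each $s_j\in S$ truncates to $\tau_j$ at depth $d-1$ and $q^i_s=0$ otherwise, the non-symmetric composition lemma of Subsection~\ref{sec::nonsym_grading} gives
\[
y_i \,=\, t^{a_i}\sum_{s\in [0..N]^{d_i}} q^i_s\, y_{s_1}\cdots y_{s_{d_i}}, \qquad a_i:=|\mu|, \quad i=1,\dots,N,
\]
exactly the claimed form. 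The bounds on $N,d_i,a_i$ are then immediate: $d_i$ and $a_i$ are bounded by the maximum arity and internal degree of the generators, and $N$ is bounded by the number of depth-$\le d$ silhouettes with labels in $\Upsilon$, a function of $|\Upsilon|$, the maximal generator arity, and $d$.

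The main obstacle will be designing the silhouette finely enough that the global condition ``no $r\in R$ appears anywhere in $v$'' splits cleanly into a local condition at the root (the requirement $i\in S$) and a recursive condition on the children (enforced already by the fact that each $y_{s_j}$ only sums over allowed trees). In particular, the depth-$(d-1)$ truncation must be a single-valued function on $S$ so that the coefficients $q^i_s$ genuinely land in $\{0,1\}$ rather than adding up with multiplicities; this is the reason the silhouette must record the entire depth-$d$ shape of $v$ and must distinguish open placeholders from the genuine leaves that happen to lie at depth $\le d$.
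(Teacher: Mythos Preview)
Your argument is correct and follows essentially the same approach as the paper's proof: reduce to the monomial operad, then partition the allowed tree monomials by a bounded-depth truncation and derive a polynomial system from the root decomposition. The paper calls this truncation the \emph{stump} $b(v)$ (the maximal left divisor with leaves at level $<d$) and partitions the basis into the sets $\widetilde{\M_{b_i}}$; you call it the \emph{silhouette} and keep one more level of information by explicitly distinguishing genuine leaves from cut placeholders. This refinement only enlarges the state set $S$ (hence $N$) relative to the paper's $\Stump$, but the transition rule and the resulting system have the same shape, and your placeholder bookkeeping makes the description of $q^i_s$ a bit more transparent --- in the paper the analogous role is played by the function $j_\mu(i_1,\dots,i_n)$, which already lands in $\{0,1\}$ without the extra distinction.
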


Note that, under the initial conditions the solution of
 system~(\ref{eq::nonsym_system_eq}) is unique.
 It follows that one can consider~(\ref{eq::nonsym_system_eq}) as a system of recursive
equations on the coefficients of the series $y_i$. We will refer to~(\ref{eq::nonsym_system_eq})
as the system of recursive equations.

Below we present two different
algorithms to construct the above system of equations (in Subsections~\ref{sec::many_relations} and~\ref{sec::few_relations}).
In addition, we give also an idea of a third algorithm.

The first one is more efficient in the case of many relations of relatively
low arity. Our proof of Theorem~\ref{th:nonsym_mon_difurs} is based on this algorithm.
Additionally, it gives the positivity of coefficients
$q^i_s$ which we use in Corollary~\ref{cor::nonsym_constant_growth}.
The second algorithm seems much more useful in the case of few
relations. Namely, the number $N$ of additional variables is
typically much lower than in the first algorithm. On the other
hand, the second algorithm does not allow to have only positive
coefficients $q^{i}_s$, that is, it gives a slightly weaker
version of  Theorem~\ref{th:nonsym_mon_difurs}.
Moreover, in Subsection \ref{sec::homology_algorithm} we
present an example of computations based on a third idea, that is,
we use combinatorics of homology of the operad.
We hope that a reader can get the general idea of this method
from the given example in order to compute the functional inverse to the
generating series of  a given operad.

Note that
the systems of equations similar to
the system~\eqref{eq::nonsym_system_eq} above sometimes appear in combinatorics and computer science
in
symbolic methods for combinatorial structures (see e.g.~\cite{anal_comb})
 as well as for the context-free languages (see e.g.~\cite{Ch_Sh}).

\subsubsection{The proof of  Theorem~\ref{th:nonsym_mon_difurs}}
\label{sec::many_relations}

\begin{proof}
Suppose that an operad $\P$ has a finite set of generators $\Upsilon$
and a finite set of monomial relations $\Phi$.
(It suffices to consider monomial relations since we are dealing with generating series.
Therefore, there is no difference between  the relations that form a Gr\"obner basis
and corresponding monomial relations presented by the leading terms of the Gr\"obner basis.)
 Let $d$ be
the maximum level of leaves of elements of $\Phi$
(by the \emph{level} of a
vertex/leaf in a tree we mean the number of the edges in the path from the
root to this vertex/leaf).
As mentioned in Section~\ref{lem::basis_in_nonsym}
every monomial $v$ in the free operad $\F(\Upsilon)$ can be identified
with a rooted planar tree whose vertices are
marked by elements of $\Upsilon$.
Given such a
monomial $v$, by its {\it stump} $b(v)$ we mean its maximal
monomial left divisor such that the leaves of $b(v)$ have levels
strictly less than $d$.
In other words, $b(v)$ is the
submonomial (rooted subtree) of $v$ which consists of the root and
all vertices and leaves of $v$ of level less than or equal to $(d-1)$
 and all edges connected to them.

Let $\Stump$ be the set of all stumps of all nonzero monomials in $\P$.
Let $N$ be the cardinality of this set.
The elements $b_1, \dots, b_N$ of $\Stump$ are partially ordered by the
following  relation:
$$b_i < b_j\text{ iff }i\ne j\text{ and }b_i
\text{ is a left divisor of } b_j.$$
 Let $\M_{b_i}$ be the set of all monomials in (= the
monomial basis of) the right-sided ideal $(b_i)\subset \P$
generated by $b_i$, and let
$$
\widetilde{\M_{b_i}} = \M_{b_i} \setminus \bigcup_{j: b_i<b_j} \M_{b_j}.
$$
Then the pairwise intersections of the sets
$\widetilde{\M_{b_i}}$  are empty.
Moreover, the disjoint union $\bigcup_{i=1}^N \widetilde{\M_{b_i}} $ is the
monomial basis of the operad $\P$. We have
$$
G_{\P}(z) = \sum_{i=1}^N y_i(z),
$$
where $y_i(z) = G_{span(\widetilde{\M_{b_i}})}(z)$ is the generating series of the span
 of the set $\widetilde{\M_{b_i}}$. For every element (=operation)
$\mu \in \Upsilon$ of some arity $n$, let us define the numbers
$j_\mu(i_1, \dots, i_n)$ for all $1\le i_1, \dots, i_n\le N$ as
follows:
\begin{equation}
 \label{eq::stump_requr_index}
       j_\mu(i_1, \dots, i_n) = \left\{
        \begin{array}{ll}
                   0, & \mbox{ if } \mu(\widetilde{\M_{b_{i_1}}}, \dots, \widetilde{\M_{b_{i_n}}}) =0 \mbox{ in } \P
                   \\
                   j, & \mbox{ if the stump } b( \mu(\widetilde{\M_{b_{i_1}}}, \dots, \widetilde{\M_{b_{i_n}}})) =
                   b_j.
        \end{array}
                  \right.
\end{equation}

Note that the nonzero sets of the type  $\mu(\widetilde{\M_{b_{i_1}}}, \dots,
\widetilde{\M_{b_{i_n}}})$ have empty pairwise intersections.
Let $v$ be a nonzero monomial in $\P$ with the root vertex labeled by $\mu$.
 Then $v \in \mu (\widetilde{\M_{b_{i_1}}}, \dots,
\widetilde{\M_{b_{i_n}}})$ for some $\mu, i_1, \dots, i_n$, that is,
$v = \mu(v_{i_1},\ldots,v_{i_n})$
where the monomial subtrees  $v_{i_j} \in \widetilde{\M_{b_{i_j}}}$ are uniquely determined by
$v$.
Hence, $v\in \widetilde{\M_{b_j}}$ where $j = j_\mu(i_1, \dots, i_n)$ from~(\ref{eq::stump_requr_index}).
As soon as the degrees of the relations are less than or equal to $d$ we come up with
the following disjoint union decomposition
for all $j=1\ldots N$
$$
           \widetilde{\M_{b_{j}}} =  \bigcup_{j_\mu(i_1, \dots, i_n) = j} \mu(\widetilde{\M_{b_{i_1}}}, \dots,
                \widetilde{\M_{b_{i_n}}}), \text{ where } \mu \text{ is the root vertex of any } v\in \widetilde{\M_{b_{j}}}.
$$
This equality implies equations~(\ref{eq::nonsym_system_eq}) for the generating functions
$y_i(z) = G_{span(\widetilde{\M_{b_i}})}(z)$.
\end{proof}

Notice that if the stump $b_j$ does not have leaves of level $(d-1)$ then the corresponding set $\widetilde{\M_{b_j}}$
consists of one element $b_j$. For example,
$\M_{1}$ always consists of one element representing the identity operation
and $G_{\M_{1}} = z$. In fact, the substitution  $y_0=G_{\M_{1}} = z$
is already made in the system~(\ref{eq::nonsym_system_eq}).
Therefore, one can reduce the number of algebraic equations in the system~(\ref{eq::nonsym_system_eq})
to the number of the stumps of level $(d-1)$
which contain at least one leaf of level $(d-1)$.
In particular, if the operad $\P$ is a PBW operad then  the number of recursive equations
is one greater than the number of generators.


\begin{example}
\label{exam:assoc}
 Let $\ass$ be the operad of associative algebras considered as a non-symmetric operad.
Namely, $\ass$ is generated by one binary operation $\mu(,)$
subject to  one quadratic relation $\mu(\mu(a,b),c) = \mu(a,\mu(b,c))$.
It is well known that this relation forms a Gr\"obner basis
according to the standard lexicographical ordering of monomials~\cite{dk,hof}.
The set $\Stump$ of stumps  of level less than or equal to $1$
consists of the identity operator $1$ and the binary operation $\mu$.
So, there are two sets of the type $\widetilde{\M}$, that is, $\widetilde{\M_{1}} = \{ 1\}$ and $\widetilde{\M_{\mu}}$.
Thus, we get the following system of equations:
$$
\begin{cases}
 G_{\ass} = z+y_\mu, \\
 y_\mu = z^2 + z y_\mu
\end{cases}
\Longrightarrow G_{\ass} = z + z G_{\ass} \Longrightarrow G_{\ass}(z) = \frac{z}{1-z}.
$$
\end{example}

\subsubsection{Decreasing the number of equations}
\label{sec::few_relations}

Now we present one more algorithm that allows to derive a system
of equations similar to~(\ref{eq::nonsym_system_eq}).
In the system produced by this algorithm  the number of equations in many examples 
is much less than in the algorithm  given in Subsection~\ref{sec::many_relations} above.
The cost of this is the following: in contrast to Theorem~\ref{th:nonsym_mon_difurs},
the coefficients $q^i_s$ might be negative
(i.~e., $q^i_s \in\{-1,0,1\}$).
One may consider this algorithm as a generalization of the algorithm for binary trees with one relation
presented in~\cite{Rowland}. 
While preparing the text we were informed by Lara Prudel 
that appropriate generalization of Rowlands algorithm for ternary trees has been presented in~\cite{3-ary_trees}.

The key point of the algorithm is the following.
 In the notation of
Subsection~\ref{sec::many_relations}, this new algorithm gives a system of equations in the
generating series of the sets $\M_{b_i}$ instead
of $\widetilde{\M_{b_i}}$. Then $G_P$ is equal to the sum of $z$
and  those $G_{\M_{b_i}}$ where $b_i$ runs over the generators of the operad $\P$.
 Therefore, to find $G_P$ it is
sometimes sufficient to solve only a part of the system which allows
 to express these new variables. Thus, the number of
equations (=the number of variables) of the reduced system
can be less than the number $N$ of equations (and variables)
of the system constructed in Subsection~\ref{sec::many_relations}.

\begin{proof}[An algorithm for constructing a system of equations]

Let $\P$ be as above, i.e. $\P$ is a finitely presented
(non-symmetric) monomial operad with a finite set of generators
$\Upsilon$ and a finite set of relations $\Phi$. We suppose that the
set of relations is reduced, namely, $g$ is not divisible by $g'$
for every distinct pair $g,g'\in \Phi$. Let $\M$ be the set of
monomials in the free operad $\F(\Upsilon)$ which are not divisible by the
relations from $\Phi$. In  other words, $\M$ is a monomial basis of
$\P$. We also suppose that the unary  identity operation $1$ belongs to
the set of monomials $\M$. We call it the {\it trivial} monomial.

Consider a free operad $\F(\Upsilon)$ and the set of all tree monomials
$\B(\Upsilon)$ generated by the same set $\Upsilon$.
To each monomial $v\in\B(\Upsilon)$ one can associate the subset $\F_v\subset \B(\Upsilon)$
consisting of those basis
elements that are left divisible by $v$.
In other words, the set $\F_{v}$ is a monomial basis of a right ideal
$v\circ \F$ generated by $v$.
Obviously, for the identity operation $1$, we  have $\F_1=\B(\Upsilon)$.
Each collection $\{v_1,\ldots,v_l\}$ of monomials defines \emph{the left common multiple}
 denoted by $[v_1\cup\ldots\cup v_l]$.
The monomial $[v_1\cup\ldots\cup v_l]$ is defined as the smallest element in
$\F_{v_1}\cap\ldots \cap \F_{v_l}$.
The left common multiple (if exists) should be the unique tree given as union of its subtrees $v_i$.
 As the definition of the left common multiple one may use the following identity:
$$
\F_{v_1}\cap\ldots \cap \F_{v_l} = \F_{[v_1 \cup\ldots\cup v_l]}
$$
For the case of  empty
 intersection  $\F_{v_1}\cap\ldots \cap \F_{v_l}$ we set $[v_1\cup\ldots\cup v_l]$ to be zero.

To each monomial $v\in\M$ we associate the monomial basis $\M_v$ of the corresponding right ideal $v\circ\M$.
Obviously, we have $\M_v = \M\cap \F_v$.

Consider a given nontrivial monomial $v\in\M$. Suppose that the root generator of $v$ is a $k$-ary operation $\mu$,
that is $v = \mu\circ(v^1,\ldots,v^k)$ where $v^i$ denotes the subtree which grows from the $i$-th incoming
arrow of the root vertex  of $v$.
Let $\Phi_{v}$ be the subset of the set of generating relations $\Phi$
such that the corresponding relations have a nontrivial left common multiple with $v$ in the free operad $\F(\Upsilon)$, i.e.
$$
g\in \Phi_v\subset \Phi \stackrel{def}{\Longleftrightarrow} [v\cup g]\neq 0\ \&\ g\in\Phi
 \Longleftrightarrow \F_{v}\cap\F_g \neq 0\ \&\ g\in\Phi
$$
If the left common multiple $[v\cup g]$ is non-zero then $\mu$
should be a left divisor of $v$, hence,
we have a decomposition $g = \mu\circ(g^1,\ldots,g^k)$ for some $g^1,\ldots,g^k$.
We get the following recursive relation for the set $\M_v$:
\begin{equation}
\label{monom::recurent}
 \M_v = \mu\circ (\M_{v^1},\ldots,\M_{v^k}) \setminus
\left[\bigcup_{g\in \Phi_v}\mu\circ\left( \M_{[v^1\cup g^1]},\ldots,\M_{[v^k\cup g^k]}\right)\right].
\end{equation}
The Identity~(\ref{monom::recurent}) should be clear from the observation that  each monomial from
$\mu\circ (\M_{v^1},\ldots,\M_{v^k})$
either belongs to $\M_v$ or is divisible from the left by  the relation $g\in \Phi_v$.

Denote by $y_v(z)$ (or $y_v(z,t)$ if the operad is $\Z^2$-graded)
the generating series of the span of the set $\M_v$.
Combining the formula~(\ref{monom::recurent}) with the inclusion-exclusion principle, we
 count the cardinalities and the generating series  of the sets $\M_v$ and their intersections. We have
\begin{equation}
y_v = t^{|\mu|} \left[ \sum_{s=0}^{|\Phi_v|} (-1)^s \left(
\sum_{\begin{smallmatrix}
       \{g_{1},\ldots,g_{s}\} \subset \Phi_{v} :\\
      [v\cup g_{1}\cup\ldots\cup g_{s}] \neq 0
      \end{smallmatrix}}
 \left( y_{[v^1\cup g_{1}^1\cup\ldots\cup g_{s}^1]}\cdot\ldots \cdot
y_{[v^k\cup g_{1}^k\cup\ldots\cup g_{s}^k]}
\right)
\right) \right],
\label{series::recurent}
\end{equation}
where (as above) $\mu$ is the generator placed in the root vertex of both $v$ and $g_j$,
and the subtrees growing from the $i$-th incoming arrow of $\mu$
are denoted by $v^i$ and $g_{j}^i$ respectively.
 The internal summation is taken
over the subsets $\{g_{1},\ldots,g_{s}\} \subset
\Phi_v$ of cardinality $s$ such that the corresponding left common
multiple $[v\cup g_{1}\cup\ldots\cup g_{s}]$ is
different from zero and is not divisible by any element of
$\Phi$. 

So, we have constructed a system of the recursive equations~(\ref{series::recurent}).
To show that this algorithm is correct, it remains to bound the number of equations. 
 This is achieved in the next Lemma.

\begin{lemma}
 \label{lem::buts_few_relations}
 There exists a minimal finite set of monomials $T(\P)\subset \M$
satisfying the following conditions.
\begin{itemize}
 \item The identity operator $1$ and all generators $\Upsilon$
belongs to $T(\P)$.
\item
Suppose that $v=\mu(v^1,\ldots,v^k)\in T(\P)$ and a
collection $\{g_{1},\ldots,g_{s}\}\subset \Phi_v$
has a nontrivial left common multiple $w = [v\cup
g_{1}\cup\ldots\cup g_{s}]\neq 0$ with $v$ (in particular, each $g_{j}$
has the form $g_j = \mu(g_j^1,\ldots,g_j^k)$). Then
for each $i=1,\ldots,k$ the monomial $[v^i\cup
g_{1}^i\cup\ldots\cup g_{s}^i]$
either
belongs
to $T(\P)$  or is divisible by a relation from $\Phi$.
\end{itemize}
\end{lemma}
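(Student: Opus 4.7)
The plan is to show $T(\P)$ exists as an iterative closure and then that the closure is finite. I would construct $T(\P)$ by setting $T_0 := \{1\}\cup\Upsilon$ and, inductively, forming $T_{n+1}$ by adjoining to $T_n$ every monomial of the form $[v^i\cup g_1^i\cup\ldots\cup g_s^i]$ that lies in $\M$ (i.e.\ is not divisible by any element of $\Phi$) and arises from some $v=\mu(v^1,\ldots,v^k)\in T_n$ together with some collection $\{g_1,\ldots,g_s\}\subset\Phi_v$ whose common left multiple with $v$ is nonzero (this nontriviality already forces every $g_j$ to share the root $\mu$, giving the decomposition $g_j=\mu(g_j^1,\ldots,g_j^k)$). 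Then I set $T(\P):=\bigcup_{n\ge 0}T_n$. Minimality is automatic: the intersection of any family of sets satisfying the two bullet-point conditions again satisfies them, and coincides with the closure above.

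The heart of the argument is a depth bound. Let $\delta(w)$ denote the maximal level of a leaf of a monomial $w$, and set $d:=\max_{g\in\Phi}\delta(g)$ (the case $\Phi=\emptyset$ is trivial). I would prove by induction on $n$ that $\delta(w)\le d$ for every $w\in T_n$. The base case is immediate since $\delta(1)=0$ and $\delta(\mu)=1\le d$ for each generator $\mu\in\Upsilon$. For the inductive step, if $v\in T_n$ has $\delta(v)\le d$, then each $v^i$ and each $g_j^i$ is a subtree at depth $1$ inside $v$ or $g_j$, hence has depth at most $d-1$. The left common multiple of trees sharing a root, being by definition the minimal element of $\F_{v^i}\cap\F_{g_1^i}\cap\ldots\cap\F_{g_s^i}$ from Subsection~\ref{sec::few_relations}, is obtained by superimposing them along the common root; so its depth equals the maximum of the depths of its constituents, and is therefore $\le d-1\le d$.

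Since $\Upsilon$ is finite with operations of bounded arity, the set of planar tree monomials of depth at most $d$ is finite. The monotone chain $T_0\subseteq T_1\subseteq\ldots$ is contained in this finite set and hence stabilizes after finitely many steps, yielding a finite $T(\P)$.

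The only real subtlety is the verification that the left common multiple does not increase depth; this uses essentially only the shared-root property and the recursive description of the LCM, and once it is accepted the finiteness of $T(\P)$ reduces to a standard count of bounded-height trees over a finite alphabet. There is no other obstacle: the second bullet of the lemma is satisfied by construction of the closure, and the first by the choice of $T_0$.
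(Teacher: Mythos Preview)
Your proof is correct and follows essentially the same approach as the paper: both arguments bound the level of every monomial in $T(\P)$ by (at most) $d-1$, using that the left common multiple has level equal to the maximum of the levels of its constituents, and then conclude finiteness from the finiteness of bounded-level trees over a finite alphabet. Your version is simply more explicit in spelling out the iterative closure construction and the induction, whereas the paper compresses this into two sentences; note also that your inductive step already yields the sharper bound $\delta\le d-1$ (matching the paper's ``strictly less than $d$''), even though you only state $\delta\le d$.
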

\begin{proof}
Suppose that $d>1$ is the maximum of the levels of the relations in $\Phi$.
Since the level of the left common multiple $[v\cup w]$ is bounded from
above by the maximum of the levels of $v$ and $w$,
we conclude that the level of any monomial from $T(\P)$ is strictly less than $d$.
The set of monomials with bounded level is finite thus $T(\P)$ is finite.
\end{proof}

In fact, the bound of $T(\P)$ by the number of monomials in $\P$ of level less than or equal to $(d-1)$ is quite large. 
We will see in examples below that the cardinality of $T(\P)$
is much less.

It is clear that
\begin{equation}
y_1 = z + \sum_{\mu\in \Upsilon} y_{\mu}
\label{series::1}
\end{equation}
Therefore, for $v\in T(\P)$ we get a system of algebraic equations~(\ref{series::1}) and~(\ref{series::recurent})
using a finite set of unknown functions $\{y_v |v\in T(\P)\}$.
\end{proof}

\begin{example}
\label{ex::Asw}
 Let $\Q$ be a non-symmetric operad generated by one binary operation $(,)$
 satisfying the following two  monomial relations
of arities $4$ and $5$:
\begin{equation} \label{eq::weak-ass}
((a, b), c), d) =0 \text{ and }  (a,(b,((c,d),e))) = 0.
\end{equation}
The set $T(\Q)$ from Lemma~\ref{lem::buts_few_relations} consists of the following $5$ elements:
$$
T(\Q):= \{ 1 ; (ab) ; ((ab)c) ; (a((bc)d)) ; ((ab)((cd)e)) \}.
$$
The corresponding system of recursive equations is
\begin{equation}
\label{eq::Asw::rec:rel}
\left\{
\begin{array}{l}
y_1 = z+ y_{(ab)} , \\
y_{(ab)} = y_1^2 - y_{(ab)} y_1 - y_1 y_{((ab)c)} + y_{((ab)c)} y_{(a((bc)d))}, \\
y_{((ab)c)} = y_1 y_{(ab)} - y_{((ab)c)} y_1 - y_{(ab)} y_{(a((bc)d))} + y_{((ab)c)} y_{(a((bc)d))}, \\
y_{(a((bc)d))} = y_1 y_{((ab)c)} - y_{((ab)c)}^2 - y_1 y_{((ab)((cd)e))} + y_{((ab)c)} y_{((ab)((cd)e))}, \\
y_{((ab)((cd)e))} =
 y_{(ab)} y_{((ab)c)} - y_{((ab)c)}^2 - y_{(ab)} y_{((ab)((cd)e))} + y_{((ab)c)} y_{((ab)((cd)e))}.
\end{array}
\right.
\end{equation}
To exclude additional variables we make the following linear change of variables:
$y:=y_1$, $v_3:=y_1-y_{((ab)c)}$, $v_4:=y_1-y_{(a((bc)d))}$, $v_5:=y_{((ab)c)}-y_{((ab)((cd)e))}$.
The system~(\ref{eq::Asw::rec:rel})
is then equivalent to the following
\begin{equation}
\left\{
\begin{array}{l}
 y-z = v_3 v_4, \\
y- v_3 = (v_3 - z) v_4 \Rightarrow v_3 -z = z v_4 ,\\
y - v_4 = v_3 v_5 \\
y - v_3 -v_5 = (v_3 - z) v_5 \Rightarrow v_3-v_4 +v_5 = z v_5 .
\end{array}
\right.
\begin{array}{c}
\Rightarrow
\left\{
\begin{array}{l}
 y-z = z(1+v_4) v_4, \\
y - v_4 = z(1+v_4)(v_4 - \frac{z}{1-z})
\end{array}
\right. \\
\Rightarrow G_{\Q}(z)= y = z+ \frac{z^2(1-z^2)}{(1-z -z^2)^2} .
\end{array}
\end{equation}

\end{example}

\subsubsection{Computations via homology}
\label{sec::homology_algorithm}

In this section we show how one can simplify computations in some cases
using the monomial resolutions of operads with finite Gr\"obner bases introduced in~\cite{dk_resolutions}.
First, we recall a description of a monomial basis in these resolutions.
Second, we present a particular example where such a description
allows to compute the generating series.
The corresponding computation using the first two methods (given in two previous examples)
became extremely hard compared to what the homological method can give.
Theoretically, the monomial description of a resolution from \cite{dk_resolutions}
allows to produce an algorithm similar to the one given in Section~\ref{sec::many_relations}
 starting from a given finite Gr\"obner basis.
But the combinatorics involved  becomes tricky as soon as the complexity of intersections of
leading terms of monomials grows up.
Therefore, we decided not to give all the details of this algorithm.
However, a couple of examples given for the non-symmetric (Example~\ref{ex::nonsym_homol_exam})
 and the symmetric (Example~\ref{ex::sym_homol_exam}) cases should convince the readers that
in some computations the homological method may be more effective.

Suppose that $R$ is a free resolution (=DG model) of an operad $\P$ generated by a differential
graded vector space $\Q$.
(In other words, $R\simeq \F(\Q)$ as an operad  and there
exists a differential $d$ on $R$ such that the homology operad of $R$ is isomorphic to $\P$).
Then the generating series of $\P$ and $\Q$ are related by the equality
\begin{equation}
\label{eq::inverse}
G_{\P}(z) - G_{\Q}(G_{\P}(z)) = G_{\P}(z) - G_{\P}(G_{\Q}(z)) = z,
\end{equation}
where $G_{\Q}(z)$ is the generating  series of the Euler characteristics
$\chi$ of the components  $\Q$:
$$
G_{\Q}(z) =\sum_{n\geq 1} \chi(\Q(n)) z^{n}.
$$

Let $\P$ be a finitely presented operad with a set of generators $\Upsilon$
and a set of monomial relations $\Phi$.
Let us recall a basis in a free monomial resolution of the operad $\P$.
\begin{proposition*}[see \cite{dk_resolutions}]
 There exists a free resolution $(R,d)\stackrel{quasi}{\twoheadrightarrow} \P$
such that the set of free generators of $R$ consists of the union of the set $\Upsilon$
and the set $\H$ elements which are numbered by the following pairs:
a monomial $v\in \B(\Upsilon)$ and a set $\{w_1,\ldots,w_n\}$ of labeled subtrees (=submonomials) of $v$
satisfying the  following two conditions.
\begin{itemize}
 \item[$(h1)$] Each $w_i$  is isomorphic to one of the elements of $\Phi$ as a planar labeled tree.
 \item[$(h2)$] Each internal edge of the monomial $v$ should be covered by at least one of subtrees $w_i$.
In other words, there is no decomposition $v=v'\circ v''$ such that each $w_i$ is a subtree of $v'$ or a subtree of $v''$.
\end{itemize}
The homological degree of each generator $x\in \Upsilon$ is set to be zero and the homological degree of
the  generator $(v,\{w_1,\ldots,w_n\})\in \H$ is set to be  $n$.
\end{proposition*}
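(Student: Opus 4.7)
The plan is to exhibit the claimed resolution $R$ explicitly and then verify that it is quasi-isomorphic to $\P$ via a contracting homotopy adapted to the monomial structure.

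First I would set $R := \F(\Upsilon \sqcup \H)$ as a free (non-symmetric or shuffle) operad on the bigraded generating set, where each element $(v, \{w_1, \ldots, w_n\}) \in \H$ sits in homological degree $n$ and internal arity $\mathrm{ar}(v)$. The augmentation $\varepsilon \colon R \twoheadrightarrow \P$ sends $\Upsilon$ identically to the generators of $\P$ and vanishes on $\H$. Observe that the elements of $\H$ of homological degree $1$ are precisely pairs $(w, \{w\})$ with $w \in \Phi$, so this formalism recovers the first syzygies as exactly the relations of $\P$.

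Next I would define the differential $d$ on a generator $(v, \{w_1, \ldots, w_n\}) \in \H$ as an alternating sum $d = \sum_{i=1}^{n} (-1)^i \delta_i$, where $\delta_i$ is obtained by ``forgetting'' the subtree $w_i$ from the covering family. Concretely: once the internal edges of $v$ covered only by $w_i$ are released, the tree $v$ decomposes along those edges into a (unique, by condition $(h2)$) operadic composition $v = v' \circ (v_1, \ldots, v_k)$ where each factor inherits a sub-family of $\{w_j\}_{j \neq i}$ that satisfies $(h1)$ and $(h2)$ (a factor reduced to a single vertex of $\Upsilon$ is treated as an element of homological degree~$0$). The image $\delta_i(v, \{w_1,\ldots,w_n\})$ is the corresponding operadic composition in $R$ of these lower-degree generators.

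To show $d^2 = 0$ I would check that removing first $w_i$ and then $w_j$ yields the same decomposition of $v$ as removing first $w_j$ and then $w_i$, with the opposite Koszul sign; this is a purely combinatorial compatibility of subtree removals inside a planar (or shuffle) tree. Finally, acyclicity in positive homological degree and $H_0(R) \cong \P$ would follow from an explicit contracting homotopy: fixing an admissible order, for any element of $R$ I would identify a canonical ``last'' relation occurrence $w_i$ inside the leading monomial and use it to lift the element one homological degree up, giving an operator $s$ with $ds + sd = \mathrm{id}$ in degrees $\ge 1$ and $\mathrm{im}(d|_{R_1}) = (\Phi)$ in degree $0$. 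I expect the main obstacle to be the bookkeeping that makes $s$ well defined on the whole complex rather than just on leading terms: the Koszul signs and the branchings of condition $(h2)$ must be tracked simultaneously. Condition $(h2)$ is precisely what guarantees that the decomposition produced by removing a single $w_i$ is canonical (otherwise the generator would already be an operadic composition of smaller generators and the resolution would not be minimal), and this minimality is exactly what makes the homotopy argument go through without redundancy.
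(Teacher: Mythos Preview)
The paper does not prove this proposition at all: it is stated with an explicit citation ``see~\cite{dk_resolutions}'' and used as a black box imported from that reference, so there is no in-paper argument to compare against. Your sketch is therefore not competing with anything here; it is an attempt to reconstruct the proof from~\cite{dk_resolutions}.

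As a reconstruction, your outline is broadly in the right spirit---the differential is indeed a signed sum over deletions of one marked relation, and acyclicity is established by a combinatorial homotopy tied to an admissible monomial order---but two points deserve care. First, after deleting a single $w_i$ the resulting decomposition of $v$ is in general a \emph{tree} of smaller generators (several internal edges may become uncovered at once, and they need not all be incident to a common root piece), not merely a two-level composition $v'\circ(v_1,\ldots,v_k)$; your phrasing should allow an arbitrary operadic composition of factors, each carrying its inherited sub-family of $w_j$'s. Second, the contracting homotopy in~\cite{dk_resolutions} is not built by picking the ``last'' relation occurrence in a single leading monomial of an arbitrary element of $R$; it is defined first on the associated monomial complex (where each basis element is a tree of generators from $\Upsilon\sqcup\H$) by adjoining a canonically chosen extremal occurrence of a relation, and one must check this is compatible with the face maps---the ``main obstacle'' you flag is real, and the resolution of it uses the inclusion--exclusion structure of overlapping relation occurrences rather than a Gr\"obner-style leading-term argument. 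If you intend to supply a full proof, those are the two places where the hand-waving would need to become precise.
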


Note that the monomial $v$ in a pair $(v,\{w_1,\ldots,w_n\})\in \H$
is uniquely defined by the set of submonomials $\{w_1,\ldots,w_n\}$.
For such a pair we will use a notation $\overline{w}$.

Let us present an example where we use this description of a basis
in a monomial resolution to get a functional equation for the generating series.

\begin{exam}\label{ex::nonsym_homol_exam}
 Consider a non-symmetric operad $\Q_k$ generated by one binary operation $(,)$
which satisfies the following relation of  degree $k>2$ (i.~e., of
arity $k+1$):
$$
r_k:=(x_1,(\dots ,( x_{k-2},(x_{k-1}, x_k, x_{k+1})) \dots )=0,
$$
where $(a,b,c)$ denotes  the associator
$(ab)c-a(bc)$.

The operad $\Q_2$ coincides with the operad $\ass$ of associative algebras.  So, one might consider
the identity $r_k=0$ of  $\Q_k$ for $k>2$ as a weak version of associativity.

\begin{proposition}
 The unique generator $r_k$
of the ideal of relations
forms a Gr\"obner basis of relations in $\Q_k$.
\end{proposition}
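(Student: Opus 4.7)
The plan is to verify that the single relation $r_k$ alone is a Gröbner basis by showing that every non-trivial composition of $r_k$ with itself reduces to zero modulo $r_k$. First I fix any admissible path-lexicographic order on planar tree monomials under which the leading term of $r_k$ is
\[
M_k \;=\; x_1\bigl(x_2\bigl(\ldots\bigl(x_{k-2}\bigl((x_{k-1}x_k)\,x_{k+1}\bigr)\bigr)\ldots\bigr)\bigr),
\]
that is, the right comb of length $k-1$ topped by a left-associated innermost product. Label the internal vertices along the spine of $M_k$ as $v_0,\ldots,v_{k-1}$: the vertices $v_0,\ldots,v_{k-3}$ are right-comb vertices (``leaf on the left, binary on the right''), $v_{k-2}$ is the unique \emph{kink} (``binary on the left, leaf on the right''), and $v_{k-1}$ is the bottom vertex with two leaf children $x_{k-1},x_k$.

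Next I would enumerate all small common multiples of $M_k$ with itself. Let $T,T'$ be two copies of $M_k$ embedded in a common tree $U$, with $T$ rooted at the root of $U$ and $T'$ rooted at some internal vertex $v_i$ of $T$. The local vertex shapes of the two copies must agree wherever they share a vertex. A short case analysis shows that if $1\le i\le k-3$ then along the shared spine the depth-$(k-2-i)$ vertex of $T'$ would have to be simultaneously a right-comb vertex (as in $T'$) and $T$'s kink $v_{k-2}$ (a vertex of opposite orientation), a contradiction. Rooting $T'$ at the kink vertex $v_{k-2}$ itself fails at depth~$0$ for the same orientation reason. The only surviving non-trivial overlap places $T'$'s root at the bottom vertex $v_{k-1}$, identifying $x_{k-1}$ with the leftmost leaf of $T'$ and extending the leaf $x_k$ to the entire right subtree of $T'$; the corresponding common multiple $U$ has $2k$ leaves.

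The main step is then to verify that this unique S-element reduces to zero under the rewrite rule $M_k \mapsto x_1\bigl(x_2\bigl(\ldots\bigl(x_{k-2}\bigl(x_{k-1}(x_kx_{k+1})\bigr)\bigr)\ldots\bigr)\bigr)$ coming from $r_k$. Let $R_1$ be the tree obtained from $U$ by rewriting at the outer kink (the kink of $T$) and $R_2$ the tree obtained by rewriting at the inner kink (the kink of $T'$). I would show by explicit iteration that both $R_1$ and $R_2$ admit further reductions terminating at one and the same normal monomial, namely the pure right comb of length $2k$ on the $2k$ leaves of $U$. The key observation is that each application of $r_k$ slides a surviving kink one position to the right along the spine, so the two reduction chains simply exhaust the two kinks of $U$ in opposite orders and meet in the middle. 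The main obstacle is the combinatorial bookkeeping for arbitrary $k$; I expect that the cleanest organisation is by induction on $k$, with base case $k=2$ recovering the classical pentagon identity for associativity, and with the inductive step obtained by peeling off the outermost right-comb vertex of $U$ and invoking the induction hypothesis on the resulting tree of one smaller depth.
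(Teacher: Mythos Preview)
Your enumeration of overlaps is incorrect, and this is a genuine gap. When you root the second copy $T'$ of $M_k$ at a vertex $v_i$ with $1\le i\le k-3$, you claim a contradiction at $v_{k-2}$: $T$ forces a kink shape (internal left child, leaf right child) while $T'$, being at depth $k-2-i\le k-3$ of its own spine, forces a right-comb shape (leaf left child, internal right child). But in a small common multiple $U$ this is \emph{not} contradictory: the vertex $v_{k-2}$ in $U$ simply has \emph{both} children internal. Then $T$ is still a planar subtree of $U$ (its right input at $v_{k-2}$ is filled in $U$ by the rest of $T'$), and so is $T'$ (its left input at $v_{k-2}$ is filled by $T$'s bottom $(x_{k-1}x_k)$). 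The two copies share the chain of internal edges $v_i\to v_{i+1}\to\cdots\to v_{k-2}$ and diverge below $v_{k-2}$; this is a perfectly good overlap of arity $k+i+2$. The same remark disposes of your objection at $i=k-2$. Altogether the paper finds $k-1$ distinct overlaps, one for each root position $v_1,\ldots,v_{k-1}$ of $T'$, and all of their S-polynomials must be checked.

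Your reduction argument for the single S-element you kept is also only a sketch, and the suggested induction on $k$ does not work as stated: peeling the outermost right-comb vertex off $U$ does not produce an instance of the problem for $\Q_{k-1}$, since the rewriting rule $r_k$ is specific to $k$. The paper's argument is direct and handles all $k-1$ S-polynomials at once. It observes that any monomial of the shape
\[
x_1\bigl(x_2\bigl(\cdots\bigl(x_{k-2}\,f(x_{k-1},\ldots,x_j)\bigr)\cdots\bigr)\bigr)
\]
(with $f$ an arbitrary iterated composition of the binary generator) reduces via repeated applications of $r_k$ to the pure right comb $m_j = x_1(x_2(\cdots(x_{j-1}x_j)\cdots))$. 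Both monomials appearing in each S-polynomial have this shape with the same total number $j$ of leaves, hence each S-polynomial reduces to $m_j-m_j=0$. Once you have the correct list of overlaps, this is the reduction you should supply.
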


\begin{proof}
 Let us show that all possible $s$-polynomials reduce to zero.
Indeed, there are exactly $(k-1)$ intersections of the leading terms
of the relation $(r_k)$:
\begin{gather*}
S_{k+l+2}   :=
(x_1(x_2\overbrace{(\ldots((}^{k-2}x_{k-1}x_k)
(x_{k+1} \overbrace{(\ldots(}^{l-2}x_{k+l-1}((x_{k+l}x_{k+l+1})x_{k+l+2}\overbrace{))\ldots)}^{k+l-1}
 \quad \text{ for } l=1,\ldots,k-2
\\
\text{ and } \\ {S'_{2k}}  := (x_1(x_2\overbrace{(\ldots((}^{k-2}
x_{k-1}(x_k \overbrace{(\ldots(}^{k-4}
x_{2k-4}((x_{2k-3}x_{2k-2})x_{2k-1}\overbrace{)\ldots)}^{k+1}
x_{2k}\overbrace{)\ldots)}^{k-2}.
\end{gather*}
The lower index on the left hand side corresponds to the number of leaves/inputs in a monomial.
The corresponding  $s$-polynomials are as follows:
$$
S_{k+l+2} \rightsquigarrow (x_1(\ldots((x_{k+l}x_{k+l+1})x_{k+l+2})\ldots)) -   {
(x_1(\ldots((x_{k-1}x_k)x_{k+1}(\ldots(x_{k+l+1}x_{k+l+2})\ldots))\ldots))}
$$
\begin{multline*}
S'_{2k} \rightsquigarrow (x_1(\ldots(x_{k-1}((x_{k}(\ldots
(x_{2k-4}((x_{2k-3}x_{2k-2})x_{2k-1}))\ldots))x_{2k}))\ldots)) \\
-   { (x_1(\ldots(x_{k-2}((x_{k-1}(\ldots
(x_{2k-3}(x_{2k-2}x_{2k-1}))\ldots))x_{2k}))\ldots)) }.
\end{multline*}
It is easy to see that each monomial of the form
$$
  (x_1(\dots (x_{k-2}( f(x_k, \dots, x_j))\dots)),
$$
where $j\ge k+1$ and $f$ is an arbitrary  iterated composition of
the operation $(,)$, is reducible via $r_k$ to the monomial
$$
 m_j = (x_1(\dots (x_{j-2} (x_{j-1}, x_j))\dots)).
$$
Thus, both monomials in each $s$-polynomial above are reduced
to the same monomial $m_j$ for suitable choice of $j$. This means that each
$s$-polynomial is reduced to $m_j-m_j=0$.
\end{proof}

As soon as the Gr\"obner basis is chosen it remains to compute the generating series of homology
for the corresponding monomial replacement.

The operad $\Q_k$ is generated by one binary operation. Therefore,
the tree-monomials under consideration are rooted planar binary trees
where all internal vertices are labeled by the same operation $(,)$
and we will omit this labeling with no loss.
In order to specify the tree-type of monomials we say that
the monomial operation $((x_1 x_2)x_3)$ corresponds to a planar rooted binary tree with two internal
vertices and one internal edge that goes to the left
and the monomial operation $(x_1 (x_2 x_3))$ corresponds to a binary planar rooted tree with
one internal edge that goes to the right.
In particular,
the leading term of the unique element $(r_k)$ of a Gr\"obner basis corresponds to a planar rooted binary tree of level $k$
with $k$ internal vertices and the unique path that contains all $(k-1)$ internal edges of a tree.
This path starts at the root vertex, then follows the edge going to the right at each of the next k-2 vertices, 
then follows the edge going to the left at the last vertex.

For any given element $(v,\{w_1,\ldots,w_n\})\in \H$
all $w_i$ are isomorphic to the leading term of the relation $r_k$ as a planar binary tree.
Therefore, there exists exactly one submonomial
which contains the root vertex of $v$. Without loss of generality we assume that this submonomial is $w_1$.
The pair $(v,\{w_2,\ldots,w_n\})$ will no longer satisfy the property $(h2)$ of the elements in $\H$
but can be presented as the composition of a generator $(,)$ taken several times and pairs from $\H$.
In other words, one has to present the decomposition of the set of submonomials $\{w_2,\ldots,w_n\}$ into a disjoint union
of subsets such that $w_i$ and $w_j$ belong to the same subset if and only if there exists a submonomial
$v'$ in $v$ and a subset $\{w_i,w_j,\ldots\}\subset  \{w_2,\ldots,w_n\}$ such that the pair
$(v',\{w_i,w_j,\ldots\})$ is isomorphic to a pair from $\H$.
Let us show that in the case of the operad $\Q_k$ this decomposition contains at most two subsets.
Indeed the unique left internal edge of a submonomial $w_1$ may not belong to any other submonomial $w_i$ for $i>1$.
Consider a decomposition of a monomial $v = v'\circ v''$ according to this left internal edge.
Then each $w_i$ is a submonomial of $v'$ or $v''$ and we have a decomposition
\begin{equation}\label{eq::decomp_homology_r_k}
\{w_2,\ldots,w_n\} =
\{w_{\sigma(2)},\ldots,w_{\sigma(l)}\}\sqcup \{w_{\sigma(l+1)},\ldots,w_{\sigma(n)}\}
\end{equation}
for an appropriate permutation $\sigma$. 
The cases in which the first or second subset is empty may also occur.
There exists a pair of a uniquely defined (probably empty) submonomials $v_1\subset v'$ and $v_2\subset v''$
such that both pairs $(v_1,\{w_{\sigma(2)},\ldots,w_{\sigma(l)}\})$ and $(v_2,\{w_{\sigma(l+1)},\ldots,w_{\sigma(n)}\})$
belongs to $\H$.
Moreover, the monomial tree $v$ can be uniquely presented as a composition of several generators and monomials $v_1$ and $v_2$.
We get a recursive formula for the generating series $G_{\H}(z)$ of the Euler characteristics of elements in $\H$:
$$
G_{\H} = -z^{k+1} - \sum_{l=2}^{k-1} z^{l+1} G_{\H} - \sum_{l=1}^{k-2} z^{l} G_{\H}^2  - z^{k-1} G_{\H}.
$$
Here the first summand corresponds to the empty set $\{w_2,\ldots,w_n\}$;
the second summand corresponds to the empty set $\{w_{\sigma(l+1)},\ldots,w_{\sigma(n)}\}$;
the index $l$ corresponds to the number of internal vertices in a subtree $v'$ which do not belong to none of the vertices of
submonomials in $\{w_{\sigma(2)},\ldots,w_{\sigma(l)}\}$;
the third summand deals with both nonempty sets in  decomposition~(\ref{eq::decomp_homology_r_k});
and the fourth summand corresponds to the empty set $\{w_{\sigma(2)},\ldots,w_{\sigma(l)}\}$.
The powers of $z$ are equal to the number of leaves coming from the internal vertices that do not belong to submonomials
$v_1$ and $v_2$. The minus signs comes from the homological degree since we remove exactly one element $w_1$
from the set of submonomials.

Finally we have the following quadratic equation for the functional inverse series $G_{\Q_k}(z)^{-1} = z - z^{2} - G_{\H}$:
\begin{multline*}
\left( G_{\Q_k}(z)^{-1}\right)^2 (z^{k-1}-z) + G_{\Q_k}(z)^{-1} (z^{k+1} - 3z^{k} + z^{k-1} - z^{3} + 2 z^{2} - z +1) \\
-(z^{k+2} - 2 z^{k+1} + z^{k} - z^{4} + 2z^3 - 2z^2 +z ) = 0 ,
\end{multline*}
which is equivalent to an algebraic equation of degree $(k+2)$ on the generating series $G_{\Q_k}$.

\end{exam}

\subsection{Single algebraic equation for  generating series}

\label{sec:alg_eq_nonsym}

The classical elimination theory implies the existence of an algebraic equation on a function $G_{\P}$
from the system~(\ref{eq::nonsym_system_eq}) (see explanation below).
See also \cite{Ch_Sh} and the appendix B.1 in~\cite{anal_comb} and references therein where the same theorem is proven for
context-free specifications and languages.

\begin{theorem}
\label{th-nonsym}
 The generating series $G_{\P}$  of a non-symmetric operad $\P$ with a finite Gr\"obner basis is an algebraic function.
\end{theorem}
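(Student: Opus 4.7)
The plan is to apply standard elimination theory to the polynomial system supplied by Theorem~\ref{th:nonsym_mon_difurs}, which expresses $G_{\P}(z) = z + y_1(z) + \cdots + y_N(z)$, where $(y_1, \ldots, y_N)$ is the unique formal power series solution of the system~(\ref{eq::nonsym_system_eq}). Since the set of formal power series algebraic over $\QQ(z)$ is closed under addition, it is enough to prove that each $y_i(z)$ is algebraic over $\QQ(z)$.

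Regarding $F_i := y_i - t^{a_i}\sum_s q^i_s\, y_{s_1}\cdots y_{s_{d_i}}$ as elements of $\QQ[z, y_1, \ldots, y_N]$ (setting $t = 1$ for the univariate statement), one observes that each $F_i$ has the shape $y_i - (\text{terms of total order}\ge 2\text{ in }z, y_\bullet)$, so the Jacobian matrix $(\partial F_i / \partial y_j)_{1 \le i, j \le N}$ evaluated at the origin equals the $N \times N$ identity matrix. This is precisely the nondegeneracy hypothesis appearing in Artin's theorem on algebraicity of formal power series solutions of polynomial systems (compare \cite[App.~B.1]{anal_comb} and \cite{Ch_Sh}, already cited in the preceding subsection). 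Applying that theorem shows each $y_i(z)$ is algebraic over $\QQ(z)$, and hence so is $G_{\P}(z)$.

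For a self-contained argument, the plan is to eliminate $y_1, \ldots, y_{i-1}, y_{i+1}, \ldots, y_N$ by iterated resultants (or equivalently by a lex-ordered Gr\"obner basis computation) to produce an explicit polynomial $P_i(z, y_i) \in (F_1, \ldots, F_N) \cap \QQ[z, y_i]$. The main obstacle—and essentially the only substantive point of the whole proof—is verifying that this elimination ideal is nonzero. This is supplied by the Jacobian computation above: at the origin the tangent space of $V(F_1, \ldots, F_N) \subset \QQ^{N+1}$ is one-dimensional (spanned by $\partial/\partial z$), so the irreducible component through the origin is a smooth one-dimensional subvariety, whose projection to the $(z, y_i)$-plane is contained in a proper algebraic curve; the defining polynomial of that curve is a nonzero element of $(F_1, \ldots, F_N) \cap \QQ[z, y_i]$ that annihilates $y_i(z)$. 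The same reasoning with $\QQ$ replaced by $\QQ[t]$ throughout gives algebraicity of the bivariate generating series $G_{\P}(z, t)$ in the graded setting, in accordance with the remark in Subsection~1.1.
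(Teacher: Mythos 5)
Your proposal is correct and takes essentially the same route as the paper: the paper's own proof reduces Theorem~\ref{th-nonsym} to the system of Theorem~\ref{th:nonsym_mon_difurs} and then applies elimination theory, using exactly the observation that the Jacobian $\det(\Id - \partial G/\partial F) = 1 + O(F)$ is nondegenerate at the solution to conclude the relevant component is zero-dimensional (and it notes the Artin approximation alternative, your first argument, in a remark). The only substantive content of the paper's Lemma~\ref{lem:alg_elimination} that you omit is the explicit degree bound $\deg Q \le (\prod d_i)^2$, which is not needed for the statement of the theorem itself but is used later in the corollary on PBW operads.
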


Starting from the system~(\ref{eq::nonsym_system_eq}), Theorem~\ref{th-nonsym}
immediately
follows from the next Lemma~\ref{lem:alg_elimination}.

\begin{lemma}
\label{lem:alg_elimination}
Suppose that the formal power series $f_1, \dots, f_n\in \QQ[[t,z]]$ without constant terms in variables $t$ and $z$
 satisfy a system of algebraic equations of the form
$$
 f_i  t^{\mu_i} = g_i (f_1, \dots, f_n)
$$
for each $i=1, \dots, n$, where $g_i$ is a homogeneous polynomial in $n$
variables of degree $d_i\ge 2$ and $\mu_i$ are positive integers.
Then the power series $f_1$
satisfies a polynomial equation
$$
Q(f_1) =0,
$$
where $Q$ is a non-constant polynomial with coefficients in $\QQ[t,z]$ such that
$\deg Q \le \left( \prod_{i=1}^n d_i \right)^2$.
\end{lemma}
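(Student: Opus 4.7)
The plan is to regard the hypothesis as the polynomial system
\[
P_i(y_1,\ldots,y_n) := g_i(y_1,\ldots,y_n) - t^{\mu_i} y_i = 0,\qquad i=1,\ldots,n,
\]
over the base ring $R := \QQ[t,z]$, and to extract $Q$ from the elimination ideal $I \cap R[y_1]$, where $I := (P_1,\ldots,P_n)$. Because $d_i \ge 2$, the leading form of $P_i$ is precisely the homogeneous part $g_i$, so $\deg_y P_i = d_i$ exactly; the tuple $(f_1,\ldots,f_n)$ is by hypothesis a common zero of the $P_i$, and therefore any nonzero element of $I \cap R[y_1]$ automatically furnishes an algebraic equation satisfied by $f_1$.

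I would construct $Q$ explicitly by iterated univariate resultants, eliminating $y_n, y_{n-1},\ldots,y_2$ in turn: first form $\operatorname{Res}_{y_n}(P_i,P_n)$ for $i<n$, then take resultants in $y_{n-1}$ of the resulting $n-1$ polynomials in $R[y_1,\ldots,y_{n-1}]$, and continue until a single polynomial $Q(y_1) \in R[y_1]$ remains. Since every resultant vanishes on common zeros, $Q(f_1)=0$ follows from $P_i(f_1,\ldots,f_n)=0$ by induction on the number of eliminations. The standard inequality $\deg\operatorname{Res}_{y_k}(A,B) \le \deg A\cdot\deg B$, together with a careful bookkeeping of which degree factors are inherited at each stage, produces the stated bound $\deg Q \le \bigl(\prod_i d_i\bigr)^2$; a sharper Bezout-type bound $\prod_i d_i$ is available via the Macaulay multivariate resultant applied to the degree-$d_i$ homogenisations of the $P_i$'s together with a $u$-resultant $\operatorname{Res}(F_1,\ldots,F_n, y_1 - U y_0)$, but the statement only requires the weaker one.

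The main obstacle is showing that $Q$ is not identically zero, i.e.\ that the variety $V(P_1,\ldots,P_n) \subset \mathbb{A}^n_{\overline{\operatorname{Frac}R}}$ is zero-dimensional, so that its projection to the $y_1$-axis is finite. The assumption $d_i \ge 2$ is decisive here: the leading forms of the $P_i$ are exactly the $g_i$'s, and whenever the $g_i$'s have no common projective zero in $\mathbb{P}^{n-1}_{\overline{\operatorname{Frac}R}}$, projective Bezout forces $|V|\le \prod_i d_i$, and hence $Q \ne 0$. In the potentially degenerate case where the $g_i$'s share a projective zero, one first replaces the generating set of $I$ by a generic $R$-linear combination (which does not alter the ideal $I$ and hence neither the eventual $Q$) in order to put the new leading forms in general position at infinity, reducing to the non-degenerate case. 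Once $Q\not\equiv 0$ is established, the conclusion of the lemma is immediate.
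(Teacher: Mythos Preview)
Your approach differs from the paper's. The paper works over the field $K=\QQ(t,z)$, observes that the Jacobian $\det(\partial H/\partial F)=1+O(F)$ is not identically zero, and from this asserts that the solution variety $V\subset L^n$ is $0$-dimensional; it then invokes B\'ezout to produce $T(x)$ over an algebraically closed extension $L$ with $T(f_1)=0$ and $\deg T\le\prod d_i$, applies Koll\'ar's effective Nullstellensatz to find $j\le\prod d_i$ with $T^j\in I$, and finally extracts $Q$ as the univariate element of a lex Gr\"obner basis of $I$ over $K$, so that $Q\mid T^j$ and $\deg Q\le(\prod d_i)^2$. You instead run iterated univariate resultants and argue nonvanishing via the leading forms $g_i$.

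There is a genuine gap in your nonvanishing argument. Your proposed fix in the degenerate case---replacing the $P_i$ by generic $R$-linear combinations ``to put the new leading forms in general position at infinity''---does not work: a linear combination $\sum_j c_{ij}P_j$ has leading form equal to a linear combination of those $g_j$ of maximal degree, and any common projective zero of the $g_j$ remains a common zero of these new leading forms. So you cannot remove common zeros at infinity this way, and the zero-dimensionality of $V$ (hence the nonvanishing of your final resultant) is not established. A separate issue with iterated univariate resultants is that intermediate resultants can vanish identically even when $V$ is $0$-dimensional (for instance when two intermediate eliminants share a nontrivial factor in the next variable to be eliminated); you do not address this either. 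The paper's route sidesteps both problems by arguing $0$-dimensionality via the Jacobian and then reading off $Q$ from a Gr\"obner basis of $I$ rather than from a resultant cascade.
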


\begin{proof}[Proof of Lemma~\ref{lem:alg_elimination}]
The above system has the form
$$
F = G(F),
$$
where $F = (f_1, \dots, f_n)$ and $G = (t^{-\mu_1} g_1, \dots, t^{-\mu_n} g_n)$,
or
$$
    H(F) = 0
$$
with $H = \Id - G$.
Note that  the Jacobi matrix  $J = \partial H / \partial F$ is  non-degenerate,
because $\det J =  1 + O(F) \ne 0$.
Let $K$ be the field of rational fractions $\QQ(t,z)$
and let $L$ be one of its algebraically closed extension
which contains the ring of formal power  series in $t$ and $z$. Obviously,
 the variety $V \subset L^n$
 of the solutions of the above system is 0-dimensional.

 Therefore,  there exists a non-trivial polynomial $T(x)$ over $L$ such that $T(f_1)=0$.
By Bezout's theorem, one can take $T$ such that $\deg T \le \prod_{i=1}^n d_i  $.
By effective Hilbert Nullstellensatz (see \cite[Corollary~1.7]{kollar}),
for some  $j\le \prod_{i=1}^n d_i $ the polynomial $ T(x)^j$ lies in the polynomial ideal $I$ generated over $K$ by
$G_i$'s. It follows that some divisor $Q(x)$ of $ T(x)^j$ belongs to the reduced  Gr\"obner basis of $I$
(w.~r.~t. the ``lex'' order).
  Since this element $Q(x)$ can be constructed via Buchberger's algorithm,
its coefficients belong to $K$. In addition, we have $Q(f_1) = 0$ and $\deg Q \le j \deg T \le \left( \prod_{i=1}^n d_i  \right)^2$.
\end{proof}

\begin{rema}
Note that the existence of such a polynomial $Q$ follows also from Artin's Approximation Theorem~\cite[Theorem~1.7]{artin}.
\end{rema}

\begin{corollary}
 Let $\P$ be a PBW operad with $k$ binary generating operations.
Then the generating series of this operad is a solution of an algebraic equation of degree not greater than $4^k$.
\end{corollary}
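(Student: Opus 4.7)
The plan is to combine the explicit construction behind Theorem~\ref{th:nonsym_mon_difurs}, as carried out in Subsection~\ref{sec::many_relations}, with the elimination bound of Lemma~\ref{lem:alg_elimination}, taking advantage of the very restricted shape of the system that arises in the PBW case.

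Since $\P$ is PBW, its ideal of relations has a quadratic Gr\"obner basis, so in the notation of the proof of Theorem~\ref{th:nonsym_mon_difurs} one has $d = 2$. The set of stumps of level at most $d - 1 = 1$ then consists precisely of the identity operation and the $k$ binary generators $\mu_1, \ldots, \mu_k$, in agreement with the remark in Subsection~\ref{sec::many_relations} that for a PBW operad the number of recursive equations is one greater than the number of generators. This gives $k + 1$ generating functions $y_0 = z$ and $y_1, \ldots, y_k$. Because the root vertex of every monomial contributing to the recursion is a binary operation, each of the equations~(\ref{eq::nonsym_system_eq}) indexed by $i \ge 1$ is homogeneous of degree exactly $2$ in the variables $y_j$.

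Substituting $y_0 = z$ into the remaining equations yields a system of $k$ polynomial equations in $k$ unknowns $y_1, \ldots, y_k$ of the form $y_i = g_i(y_1, \ldots, y_k)$ with $g_i$ of degree $d_i = 2$. Applying Lemma~\ref{lem:alg_elimination} with $n = k$ and $d_i = 2$ for all $i$ then produces a nontrivial algebraic equation of degree at most $\bigl(\prod_{i=1}^k 2\bigr)^2 = 4^k$ satisfied by $y_1$.

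To finish I would pass from $y_1$ to $G_{\P} = z + y_1 + \cdots + y_k$. The series $G_{\P}$ lies in the same finite-dimensional coordinate ring $L[y_1, \ldots, y_k]/I$ as the individual variables, and a direct rerun of the Bezout plus effective Nullstellensatz argument used in the proof of Lemma~\ref{lem:alg_elimination}, now applied with $G_{\P}$ in place of $f_1$, yields an annihilating polynomial over $\QQ(t,z)$ of the same degree bound $4^k$. The only mildly subtle point is this last step of promoting the bound from a single variable to the linear combination $G_{\P}$; the cleanest route is to repeat the elimination argument verbatim, since the Bezout bound on the $0$-dimensional variety is insensitive to which element of the coordinate ring one chooses to annihilate.
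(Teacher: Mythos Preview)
Your argument is correct and follows essentially the same route as the paper: the paper's proof is the two-line observation that the PBW hypothesis yields a system of $k$ quadratic and one linear equation on $k+1$ functions, to which Lemma~\ref{lem:alg_elimination} is then applied. Your write-up simply unpacks this in more detail and is explicit about the last step of passing from the individual $y_i$ to $G_{\P}$; the paper handles this implicitly by including the linear relation $G_{\P} = z + y_1 + \cdots + y_k$ among the equations (so that $G_{\P}$ itself is one of the $k+1$ unknowns), which keeps the Bezout product at $2^k$ and hence the bound at $4^k$.
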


\begin{proof}
 The algorithm described above implies the existence of a system of $k$ quadratic and one linear equation on
$k+1$ functions. Then we apply Lemma~\ref{lem:alg_elimination}.
\end{proof}

\subsection{Non-symmetric operads of subexponential growth}

\label{sec::nonsum_const_growth}

Below we present an application of the above theory for non-symmetric operads
with small growth. Recall that a sequence $\{ a_n \}_{n\ge 0}$
of nonnegative real numbers is said to have  subexponential growth
if its growth is strictly less than exponential, that is, for each
$d>1$ there exists $C>0$ such that $a_n < Cd^n$ for all $n>0$.

\begin{corollary}
\label{cor::nonsym_constant_growth}
 Let $\P$ be a non-symmetric operad with a finite Gr\"obner basis of relations.
Suppose that the growth of dimensions $\P(n)$ is subexponential.
Then the ordinary generating series $G_{\P}$
 is rational. In particular, the
sequence of dimensions $\P(n)$ has a polynomial growth
$[n^d]$ for some integer $d$.
\end{corollary}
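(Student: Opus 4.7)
The plan is to apply Theorem~\ref{th:nonsym_mon_difurs} with $t=1$, taking advantage of the nonnegativity $q^i_s \in \{0,1\}$ emphasized in its proof. One obtains a polynomial system
$$
y_i = F_i(z, y_1, \ldots, y_N), \qquad i=1,\ldots, N,
$$
with each $F_i$ a polynomial in $(y_1,\ldots,y_N)$ having nonnegative integer coefficients in $z$, $y_0 = z$, $y_i(0)=0$, and $G_\P(z) = z + \sum_{i=1}^N y_i(z)$. Since each $y_i$ enumerates a subset $\widetilde{\M_{b_i}}$ of the monomial basis of $\P$, its coefficients are nonnegative integers bounded coefficientwise by those of $G_\P$, so every $y_i$ inherits subexponential growth.

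Next I would form the dependency digraph on $\{1,\ldots,N\}$ with an edge $i\to j$ whenever $y_j$ occurs in $F_i$, decompose it into strongly connected components, and induct on these in reverse topological order. For a given component $S$, substituting the already-rational $y_j$'s with $j\notin S$ turns the restricted equations into $y_i = H_i(z,(y_j)_{j\in S})$ with $H_i$ polynomial in $(y_j)_{j\in S}$ and with nonnegative coefficients in $\QQ(z)$. If all $H_i$ are linear in $(y_j)_{j\in S}$, then the subsystem reads $y = A(z)y + b(z)$ with rational $A,b$, so $y = (I - A(z))^{-1} b(z)$ is rational. Otherwise, the Drmota--Lalley--Woods theorem for strongly connected, aperiodic polynomial systems with nonnegative coefficients and a genuine nonlinearity produces a common algebraic singularity at some $\rho<\infty$ with asymptotics $[z^n]y_i \sim c_i\,\rho^{-n} n^{-3/2}$. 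Subexponentiality forces $\rho\ge 1$, while the $n^{-3/2}$ factor forces $[z^n]y_i \to 0$; since the $y_i$ have nonnegative integer coefficients this means only finitely many are nonzero, contradicting the strong connectivity of a nontrivial nonlinear $S$. Periodic components reduce to the aperiodic case by restricting to a suitable arithmetic progression of degrees.

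Therefore every SCC must be linear, and the induction gives $G_\P \in \QQ(z)$. The stated polynomial growth then follows from an elementary partial-fraction analysis: a rational series with nonnegative integer coefficients of subexponential growth cannot have a pole of modulus $<1$ (that produces exponential growth) and cannot have a pole of modulus $>1$ unless it is a polynomial (otherwise its coefficients tend to zero and, being integers, vanish eventually), so all poles lie on $|z|=1$ and $\dim \P(n) = O(n^d)$, where $d+1$ is the largest pole order. The main obstacle is the Drmota--Lalley--Woods step, where one has to verify the precise hypotheses (properness, irreducibility, aperiodicity) for the substituted subsystem and handle carefully the periodic or otherwise degenerate configurations that fall outside the clean statement.
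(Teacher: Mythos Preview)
Your approach is sound in outline but takes a genuinely different route from the paper. Both start from the system of Theorem~\ref{th:nonsym_mon_difurs} and use that each $y_i$ has nonnegative integer coefficients dominated by those of $G_\P$. From there the paper does \emph{not} invoke Drmota--Lalley--Woods or any SCC decomposition. Instead it observes that each $y_i$ is individually algebraic and applies the single-variable Puiseux asymptotics (Lemma~\ref{lem::alg_func_coef}) to assign to each $y_i$ a finite Gelfand--Kirillov dimension; a short lemma (Lemma~\ref{lem::sum_GK_dim}) shows that GK-dimension is additive under products of such series. Stratifying the variables by GK-dimension and comparing the two sides of each equation then forces every monomial on the right-hand side to contain at most one factor from the current stratum, with all remaining factors polynomial---so within each stratum the system is \emph{linear} over the (inductively rational) lower strata, and one solves it by inverting $I-A_s(z)$. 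This is elementary and self-contained, whereas your argument outsources the key dichotomy to a multivariable singularity-analysis black box whose hypotheses (analyticity of the substituted rational coefficients at the dominant singularity $\rho$, aperiodicity, properness after substitution) you correctly flag as requiring care; in particular, when $\rho=1$ the substituted lower-SCC series may themselves be singular there, which is exactly the situation the paper's GK-dimension bookkeeping sidesteps.

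Two minor corrections to your write-up. First, the contradiction in your nonlinear-SCC case is not with strong connectivity itself but with the hypothesis that some $y_i$ in the block is an infinite series: the argument actually shows that all $y_i$ in such a block are polynomials, which is perfectly consistent and already gives rationality. Second, in the final partial-fraction step, poles of modulus $>1$ are allowed---they contribute $o(1)$ to the coefficients and do not obstruct the polynomial-growth conclusion; only the dominant poles need to sit on $|z|=1$.
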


The proof of Corollary~\ref{cor::nonsym_constant_growth}
is based on the general facts about the Taylor coefficients of algebraic functions and on the positivity of
coefficients in the system~(\ref{eq::nonsym_system_eq}).

 Consider the system of equations~(\ref{eq::nonsym_system_eq}) 
obtained while using the first algorithm presented in Section~\ref{sec::many_relations}.
Let us remind that the series $y_i = \sum_{n\geq 0} y_{i,n} z^n$ is a generating series of
the set of monomials in $\widetilde{\M_{b_i}}$
and the right hand side of any equation from this system has strictly positive coefficients.
Therefore, for all $i$ the series $y_i$ is an algebraic function with nonnegative integer coefficients
bounded by the dimensions of $\P(n)$.
First, we explain the technical result on algebraic functions with subexponential growth of coefficients and then explain 
a proof of Corollary~\ref{cor::nonsym_constant_growth}.
\begin{lemma}
\label{lem::alg_func_coef}
 Suppose that $f(z):=\sum_{n\geq 1} f_n z^{n}$ is an algebraic function, such that the sequence of coefficients
form a sequence of nonnegative real numbers with subexponential growth.
Then there exists a rational number $s$, integer $m$ and a pair of constants $C_{-},C_{+}$ such that
for $n_0$ sufficiently large
\begin{itemize}
 \item[(a)] the upper bound $f_n< C_{+} n^s$ is true for all $n>n_0$,
 \item[(b)] the lower bound $f_n> C_{-} n^{s}$ is true for at least one index $n$ in each 
consecutive collection $\{N,N+1,\ldots,N+m-1\}$
of $m$ integers, where $N>n_0$.
\end{itemize}
\end{lemma}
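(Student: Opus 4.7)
The plan is to appeal to classical singularity analysis of algebraic functions. First, I would note that by Pringsheim's theorem (applicable since $f$ has nonnegative coefficients) combined with the subexponential hypothesis, the radius of convergence $\rho$ of $f$ satisfies $\rho\ge 1$, and $z=\rho$ is itself a singularity of $f$. Because $f$ is algebraic, it has only finitely many singularities; let $\rho=\rho_1,\rho_2,\dots,\rho_k$ be those lying on the circle $|z|=\rho$. By Puiseux's theorem, near each $\rho_i$ there is a convergent expansion
\begin{equation*}
 f(z)=\sum_{j}c_{ij}\,(1-z/\rho_i)^{s_{ij}}+(\text{analytic part}),\qquad s_{ij}\in\QQ,
\end{equation*}
with a well-defined minimal exponent.

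Second, I would apply the Flajolet--Odlyzko transfer theorem (see \cite[Ch.~VI--VII]{anal_comb}) to each dominant singular expansion and sum the contributions, obtaining an asymptotic expansion
\begin{equation*}
 f_n \;=\; n^{s}\rho^{-n}\sum_{i=1}^{k} A_i\,\omega_i^{\,n} \;+\; O\!\left(n^{s-1}\rho^{-n}\right),
\end{equation*}
in which $s\in\QQ$ is determined by the smallest Puiseux exponent across dominant singularities, the $A_i$ are explicit nonzero constants, and $\omega_i=\rho/\rho_i$ lie on the unit circle. A standard structural fact for algebraic series with nonnegative coefficients (compare \cite[Thm.~VII.7]{anal_comb}) then ensures that the $\omega_i$ are roots of unity; let $m$ be the least common multiple of their orders.

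Third, the expansion above takes the form $f_n\sim n^{s}\rho^{-n}P(n\bmod m)$, where $P$ is a real-valued function on $\Z/m\Z$. In the case $\rho=1$, the bound (a) follows by taking $C_+$ any constant slightly larger than $\max_r P(r)$, while the bound (b) follows from the observation that $P$ cannot vanish identically (otherwise the leading asymptotics would disappear, contradicting its dominance), so in each residue class mod $m$ some value $P(r)>0$; then for $n\equiv r\pmod m$ large enough we have $f_n\ge C_- n^s$ for any $C_-$ strictly smaller than $\min\{P(r):P(r)>0\}$. The remaining case $\rho>1$ yields exponential decay of $f_n$, and both inequalities hold trivially for any sufficiently negative $s$.

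The main technical point of the plan is the justification that the $\omega_i$ are roots of unity for algebraic series with nonnegative (real) coefficients: this rests on the algebraic nature of $f$ to constrain the Galois orbit of its singularities together with the nonnegativity of the Taylor coefficients. Once it is in place, the remainder of the argument is a direct extraction of the leading-order asymptotics and bookkeeping of the residues mod $m$.
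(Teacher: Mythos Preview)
Your overall plan mirrors the paper's: both start from the Puiseux/transfer asymptotics $f_n=n^s\sum_i C_i\omega_i^{\,n}+O(n^t)$ with $|\omega_i|=1$ (the paper quotes this as Flajolet's Theorem~D), and both read off the upper bound~(a) from $|\sum C_i\omega_i^{\,n}|\le\sum|C_i|$. The divergence is in how you get the lower bound~(b).

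Your argument for (b) rests on the claim that the $\omega_i$ are roots of unity, after which (b) reduces to a finite check over residues modulo their common order. This claim is false in the stated generality, and the reference you give (the Drmota--Lalley--Woods theorem, \cite[Thm.~VII.7]{anal_comb}) does not establish it: that theorem concerns \emph{irreducible positive polynomial systems} and yields a \emph{single} dominant singularity, not a roots-of-unity constraint for an arbitrary algebraic series with nonnegative coefficients. A concrete counterexample: for $\theta$ an irrational multiple of $\pi$ and $C>1/|\sin\theta|$, the rational function
\[
 f(z)=\frac{C}{1-z}+\frac{1}{1-2\cos\theta\, z+z^2}
\]
has coefficients $f_n=C+\sin((n{+}1)\theta)/\sin\theta>0$, and its three dominant simple poles $1,e^{\pm i\theta}$ all contribute at order $n^0$; the corresponding $\omega_i=e^{\pm i\theta}$ are not roots of unity, so your periodic function $P$ on $\Z/m\Z$ does not exist.

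The paper handles (b) without any structural claim on the $\omega_i$. It proves a short elementary sub\-lemma (Lemma~\ref{lem::exponents_units}): for \emph{any} distinct $\omega_1,\dots,\omega_m$ on the unit circle and any constants $C_i$, every window $\{n,\dots,n+m-1\}$ contains an index $k$ with $|\sum C_i\omega_i^{\,k}|>C$ for some fixed $C>0$. The proof is a two-line induction on $m$, using that the first difference $\sum C_i(\omega_i/\omega_{m+1}-1)(\omega_i/\omega_{m+1})^{k}$ has one fewer term. Thus the paper's $m$ is the \emph{number of dominant singularities}, not an lcm of orders, and the argument works for irrational rotations. If you replace your roots-of-unity step by this sub\-lemma, your proof goes through. (A minor aside: your treatment of the case $\rho>1$ is also incorrect---no fixed rational $s$ makes $f_n>C_-n^s$ hold along an infinite set when $f_n$ decays exponentially; the paper tacitly works in the regime $\rho=1$, which in its intended application with nonnegative \emph{integer} coefficients is automatic.)
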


Notice that the exponent $s$ in Lemma~\ref{lem::alg_func_coef} is one less then the so-called
Gelfand--Kirillov dimension
$$
\GKdim[f]:= \varlimsup_{n\to \infty} \frac{\ln\left(\sum_{i=0}^n f_i\right)}{\ln n}
$$
(see \cite{GK-dim} for more details about GK-dimensions of algebras).
Lemma~\ref{lem::exponents_units} below implies that the integer $m$ is bounded from above by the number of
singular points on the unit circle of the function $f$.

\begin{proof}
Let us recall the following well known theorem about asymptotic of coefficients of algebraic function
based on the Puiseux expansion near critical points.

\begin{theorem*}[see~{\cite[Theorem~D]{flj_slg_fn}}]
 If $f(z)=\sum_{n\geq 0} f_n z^{n}$ is an algebraic function over $\mathbb{Q}$ that is analytic at the origin, 
then there is the following asymptotics for the sequence of its Taylor coefficients:
$$
f_n= \beta^{n} n^s \sum_{i=0}^{m} C_i \omega_i^n +O(\beta^n n^{t}),
$$
where $s$ is a rational number, $t<s$, $\beta$ is a positive algebraic number
and $\omega_i$ are algebraic with $|\omega_i|=1$.
\end{theorem*}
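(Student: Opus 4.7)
The plan is to follow the standard singularity analysis program for algebraic generating functions as developed by Flajolet and Odlyzko. First I would use algebraicity: since $f(z)$ satisfies $P(z, f(z)) = 0$ for some irreducible $P \in \mathbb{Q}[z, w]$, the only possible singularities of $f$ are among the zeros of the $w$-leading coefficient of $P$ and of the discriminant $\mathrm{disc}_w P(z, w)$. This is a finite set of algebraic numbers, so $f$ has only finitely many singularities; since $f$ is analytic at the origin, the radius of convergence $R > 0$ is the smallest modulus among them. Set $\beta = 1/R$; it is a positive algebraic number because the modulus of an algebraic number is algebraic. The dominant singularities on $|z| = R$ form a finite set $\{z_1, \ldots, z_m\}$, and writing $\omega_i = 1/(\beta z_i)$ realizes the unimodular algebraic numbers $\omega_i$.

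Second, I would invoke Puiseux's theorem at each dominant singularity: in a slit neighborhood of $z_i$ the function admits a convergent Puiseux expansion
$$
f(z) = \sum_{k \geq k_i} c_{i,k} \left(1 - \frac{z}{z_i}\right)^{k/p_i},
$$
where $p_i \in \mathbb{Z}_{>0}$ is the local ramification index and the leading singular exponent $\alpha_i = k_i/p_i$ is rational. Define $s = -1 - \min_i \alpha_i$ and let $I \subseteq \{1, \ldots, m\}$ be the indices where this minimum is achieved.

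Third, I would extract coefficients by the Flajolet--Odlyzko transfer theorem applied in a common "camembert" domain enclosing the closed disk of radius $R$ minus small sectors around each $z_i$. The standard estimate
$$
[z^n]\left(1 - \frac{z}{z_i}\right)^{\alpha} \sim \frac{z_i^{-n} n^{-\alpha - 1}}{\Gamma(-\alpha)}\bigl(1 + O(n^{-1})\bigr)
$$
applied term by term to the Puiseux data, combined with the identity $z_i^{-n} = \beta^n \omega_i^n$, yields the contribution of each local singular term. Only the indices $i \in I$ carrying the minimal exponent survive to leading order, producing the combined main term $\beta^n n^s \sum_{i \in I} C_i \omega_i^n$, while subleading Puiseux terms at $i \in I$ and all contributions from $i \notin I$ are absorbed into an error of the form $O(\beta^n n^t)$ with $t < s$ rational.

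The main obstacle is assembling these local expansions into a single uniform asymptotic. I must exhibit a concrete camembert contour that simultaneously avoids every $z_i$, check that $f$ extends analytically through it, and prove that the Puiseux remainders can be controlled uniformly along the contour. This uniform analytic continuation and remainder control is exactly what the Flajolet--Odlyzko transfer theorem supplies; its applicability here rests on the finiteness and algebraic nature of the singular locus together with the classical Puiseux description of the local branches, which I would treat as the main technical input and invoke rather than reprove.
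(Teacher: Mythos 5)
The paper gives no proof of this statement: it is quoted verbatim as Theorem~D of Flajolet's paper \cite{flj_slg_fn} and used as a black box inside the proof of Lemma~\ref{lem::alg_func_coef}. Your outline is the standard and essentially correct singularity-analysis argument for that cited result (finiteness and algebraicity of the singular locus via the discriminant, Puiseux expansions at the dominant singularities, and coefficient transfer on a common $\Delta$-domain), so you have supplied a proof where the paper supplies only a reference. Two small points to tighten. First, in defining $s=-1-\min_i\alpha_i$ you should take the minimum over the \emph{singular} Puiseux exponents, i.e.\ exclude exponents that are non-negative integers, since those terms are analytic and contribute nothing (equivalently, $1/\Gamma(-\alpha)=0$ kills them in your transfer formula); otherwise $s$ can be misidentified, e.g.\ when the local expansion begins with an analytic part. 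Second, the degenerate cases ($f$ a polynomial, or entire) should be set aside explicitly, since then there is no dominant singularity and the asymptotic form is vacuous. With those caveats your reduction to Puiseux's theorem plus the Flajolet--Odlyzko transfer theorem is exactly the intended content of the cited Theorem~D, and invoking transfer rather than reproving it is reasonable at the level of detail the paper itself operates at.
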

It follows from the proof in \cite[Theorem~D]{flj_slg_fn}
that $\beta$ is the inverse of the radius of convergence.
Therefore in our case (where the sequence of nonnegative integer coefficients $f_n$ has subexponential growth)
 $\beta$ is equal to $1$.
The numbers $\omega_i$ are equal to the singular points on the unit circle of function $f(z)$.
The upper bound $(a)$ of Lemma~\ref{lem::alg_func_coef} follows from the upper bound:
$$
|\sum_{i=1}^{m} C_i \omega_i^n| \leq \sum_{i=1}^m |C_i| \
\Rightarrow \ f_n \leq (\sum|C_i|) n^s + O(n^t) < (1+\sum|C_i|) n^s \text{ for } n>>0.
$$
The following simple sub-lemma implies the lower bound $(b)$ in Lemma~\ref{lem::alg_func_coef}
and finishes its proof.
\end{proof}
\begin{lemma}
\label{lem::exponents_units}
For any given collection of distinct complex numbers $\omega_1,\ldots,\omega_m$ with $|\omega_i|=1$
and a given collection of constants $C_1,\ldots,C_m$
there exists a constant $C$ such that for all $n$ there exists an index $k=k(n)\in \{n,n+1,\ldots,n+m-1\}$
such that $|\sum_{i=1}^m C_i \omega_i^k|>C$.
\end{lemma}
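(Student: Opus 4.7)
\smallskip

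\noindent\textbf{Proof plan.} The plan is to reduce the claim to a statement about invertibility of a Vandermonde matrix. For each $n$, consider the vector
\[
\Phi(n) = \bigl(f(n), f(n+1), \ldots, f(n+m-1)\bigr)^T \in \mathbb{C}^m,
\qquad f(k) := \sum_{i=1}^m C_i \omega_i^k .
\]
Writing $f(n+j) = \sum_i \omega_i^j \cdot (C_i\omega_i^n)$, I would observe that $\Phi(n) = V \cdot D(n)$, where $V$ is the $m\times m$ Vandermonde matrix $V_{ji}=\omega_i^j$ (rows indexed by $j=0,\dots,m-1$, columns by $i=1,\dots,m$) and $D(n) = (C_1\omega_1^n,\dots,C_m\omega_m^n)^T$.

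Since the numbers $\omega_1,\ldots,\omega_m$ are pairwise distinct, $V$ is invertible. Using any fixed norm on $\mathbb{C}^m$ we obtain
\[
\|\Phi(n)\| \;\geq\; \|V^{-1}\|^{-1}\,\|D(n)\| .
\]
Because $|\omega_i|=1$ for every $i$, the norm $\|D(n)\|$ is independent of $n$: with the Euclidean norm it equals $\bigl(\sum_i |C_i|^2\bigr)^{1/2}$. (If all $C_i$ vanish the statement is vacuous, so I may assume this quantity is strictly positive; in the application inside Lemma~\ref{lem::alg_func_coef} one only invokes the lemma when the Puiseux expansion is nontrivial.) Setting
\[
C \;:=\; \tfrac{1}{2}\,\|V^{-1}\|^{-1}\Bigl(\sum_{i=1}^m |C_i|^2\Bigr)^{1/2}
\]
and using the equivalence of $\ell^\infty$ and $\ell^2$ norms on $\mathbb{C}^m$, we conclude that for every $n$ at least one coordinate of $\Phi(n)$ has modulus greater than $C$, i.e.\ there is an index $k(n)\in\{n,n+1,\dots,n+m-1\}$ with $|f(k(n))|>C$.

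The only point that needs a little care is verifying that $V$ is invertible (standard Vandermonde determinant) and keeping track that the lower bound on $\|D(n)\|$ is genuinely independent of $n$; both of these are routine since the $\omega_i$ lie on the unit circle. There is no real obstacle, and no hidden uniformity issue, because $m$ is fixed and the operator norm $\|V^{-1}\|$ depends only on the $\omega_i$.
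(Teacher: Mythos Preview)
Your argument is correct and takes a genuinely different route from the paper's own proof. The paper proceeds by induction on $m$: dividing through by $\omega_{m+1}^k$ and taking successive differences kills the $(m+1)$-st term and reduces to a sum with $m$ distinct ratios $\omega_i/\omega_{m+1}$ on the unit circle; the inductive bound on the difference then forces one of two consecutive values of $|f(k)|$ to be large. Your approach instead packages the $m$ consecutive values into a vector and observes that this vector is the image of $(C_1\omega_1^n,\dots,C_m\omega_m^n)^T$ under a fixed invertible Vandermonde matrix, so its norm is uniformly bounded below because $|\omega_i|=1$ makes $\|D(n)\|$ constant in $n$.

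Your argument is cleaner and more conceptual: it isolates exactly why the window length $m$ suffices (invertibility of the $m\times m$ Vandermonde) and gives an explicit constant in one stroke, with no induction. The paper's approach is slightly more elementary in that it avoids operator norms, but it is more delicate and, as written, tacitly assumes that not all of $C_1,\dots,C_m$ vanish when passing to the differenced sequence. One cosmetic point: the explicit constant you wrote, $C=\tfrac{1}{2}\|V^{-1}\|^{-1}\bigl(\sum_i|C_i|^2\bigr)^{1/2}$, bounds the $\ell^2$-norm of $\Phi(n)$ from below, not the $\ell^\infty$-norm; to get a coordinate bounded below you should include the factor $m^{-1/2}$ from the $\ell^2$--$\ell^\infty$ comparison (or simply say ``choose $C$ to be half of the resulting uniform lower bound on $\|\Phi(n)\|_\infty$''). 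This does not affect the validity of the argument.
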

\begin{proof}
The proof is by induction on $m$ (the number of summands).
The induction base is trivial since the absolute value $|C_1 \omega_1^n| = |C_1||\omega_1|^n = |C_1|$  does not depend on $n$. \\
\emph{Induction step.}
Consider an integer $n$. The induction hypothesis implies the existence of a
constant $C$ which does not depend on $n$ and an integer $k=k(n)\in \{n,n+1,\ldots,n+m-1\}$ such that
\begin{equation}
\label{eq::mod_root_unity}
\left|\sum_{i=1}^{m} \left(C_i \left(\frac{\omega_i}{\omega_{m+1}} -1\right)\right)
\left(\frac{\omega_i}{\omega_{m+1}}\right)^{k} \right| >2C .
\end{equation}
Therefore the absolute value of either the number
$\left(C_{m+1} +\sum_{i=1}^{m} {C_i} \left(\frac{\omega_i}{\omega_{m+1}}\right)^{k+1}\right)$
 or the next element of the sequence
$\left(C_{m+1} +\sum_{i=1}^{m} {C_i} \left(\frac{\omega_i}{\omega_{m+1}}\right)^{k+2}\right)$
is greater than ${C}$ since
their difference coincides with the left hand side of the inequality~(\ref{eq::mod_root_unity}).
The obvious equality of the absolute values
$$\left|\sum_{i=1}^{m+1} C_i \omega_i^n \right| =
 \left| C_{m+1} +\sum_{i=1}^{m} {C_i} \left(\frac{\omega_i}{\omega_{m+1}}\right)^{n}\right|$$
finishes the proof of the induction step.
\end{proof}
\begin{lemma}
\label{lem::sum_GK_dim}
 Let $f(z)$ and $g(z)$ be a pair of algebraic functions whose Taylor expansions at the origin have nonnegative
coefficients with subexponential growth.
Then the Gelfand--Kirillov dimension of the product $f(z)g(z)$ is the sum of the
Gelfand--Kirillov dimensions of the two factors:
$$
\GKdim[f(z) g(z) ] = \GKdim[f(z)]+\GKdim[g(z)]
$$
\end{lemma}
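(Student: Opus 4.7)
The plan is to reduce everything to the elementary sandwich inequality
\[
H(N) \ \le\ F(N)G(N)\ \le\ H(2N),
\]
where $F(N)=\sum_{n=0}^{N}f_n$, $G(N)=\sum_{n=0}^{N}g_n$ and $H(N)=\sum_{n=0}^{N}h_n$ are the partial sums of $f,g,h:=fg$. The left inequality holds because every pair $(k,j)$ with $k+j\le N$ satisfies $k\le N$ and $j\le N$; the right one because every pair with $k\le N$ and $j\le N$ satisfies $k+j\le 2N$. All terms are nonnegative, so these set inclusions give the inequalities directly.

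Next, I would promote the pointwise bounds of Lemma~\ref{lem::alg_func_coef} to matching two-sided bounds on the partial sums. Applying that lemma to $f$ yields an exponent $s$, constants $C_\pm$, and an integer $m$ such that $f_n\le C_+ n^s$ for large $n$, while within every window of $m$ consecutive indices there is an $n$ with $f_n\ge C_-n^s$. The upper bound on $f_n$ yields $F(N)\le C'_+ N^{s+1}$. For the lower bound, I would use monotonicity of $F$: in each window $[km,(k+1)m)$ the sum of $f_n$ is at least one large term, giving
\[
F(N)\ \ge\ \sum_{k=1}^{\lfloor N/m\rfloor} C_-(km)^s\ \ge\ c_- N^{s+1}.
\]
Thus $F(N)\asymp N^{s+1}$ and $\GKdim[f]=s+1$; the analogous statement with exponent $s'$ holds for $G$. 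The product $h=fg$ is again algebraic, its Taylor coefficients are nonnegative (as a convolution of nonnegative sequences), and its growth remains subexponential, so Lemma~\ref{lem::alg_func_coef} supplies an exponent $s''$ with $H(N)\asymp N^{s''+1}$.

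Inserting these asymptotics into the sandwich, the upper inequality gives
\[
c_h N^{s''+1}\ \le\ H(N)\ \le\ F(N)G(N)\ \le\ C_fC_g N^{s+s'+2},
\]
whence $s''\le s+s'+1$, while the lower one gives
\[
c_fc_g N^{s+s'+2}\ \le\ F(N)G(N)\ \le\ H(2N)\ \le\ C_h(2N)^{s''+1},
\]
forcing $s''\ge s+s'+1$. Thus $s''=s+s'+1$ and $\GKdim[h]=s''+1=(s+1)+(s'+1)=\GKdim[f]+\GKdim[g]$, as required. The only subtle point is the passage from the sparse lower bound in Lemma~\ref{lem::alg_func_coef}(b) to the dense lower bound on $F(N)$: this is exactly where the monotonicity of partial sums combined with the uniform window size $m$ is essential, and everything else is routine.
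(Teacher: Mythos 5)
Your proof is correct and follows essentially the same route as the paper's: both arguments rest on the square-versus-triangle comparison of index sets for the nonnegative convolution (your sandwich $H(N)\le F(N)G(N)\le H(2N)$ is exactly the paper's bounding of $\sum_{i+j\le n}f_ig_j$ from above by a full square of indices and from below by the sub-square $n/4\le i,j\le n/2$), combined with the window argument that upgrades the sparse lower bound of Lemma~\ref{lem::alg_func_coef}(b) to a dense lower bound on partial sums. The only organizational difference is that you also invoke Lemma~\ref{lem::alg_func_coef} for the product $h=fg$ and then solve for its exponent from the sandwich, whereas the paper estimates the partial sums of $fg$ directly from the pointwise bounds on $f_i$ and $g_j$; both variants are sound (and both, like the paper, implicitly use that the exponent $s$ is $>-1$, which holds here since the coefficients are nonnegative integers).
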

\begin{proof}
Let us denote the $n$-th Taylor coefficient of the product $f(z)g(z)$ by $(fg)_n$.
We have  $(fg)_n = \sum_{i+j=n} f_i g_j$.
 The upper bound on the Gelfand--Kirillov dimension is obvious:
\begin{multline*}
\sum_{i=1}^{n} (fg)_i < \sum_{i+j\leq n}C_{+}(f) i^{\GKdim[f]-1} C_{+}(g)j^{\GKdim[g]-1} \leq \\
\leq \frac{n(n+1)}{2} C_{+}(f) C_{+}(g) n^{\GKdim[f]+\GKdim[g]-2} < C_{+} n^{\GKdim[f]+\GKdim[g]},
\end{multline*}
where $C_+(f),C_{+}(g)$ are the constants from the upper bound in Lemma~\ref{lem::alg_func_coef}
for the functions $f$ and $g$ respectively.
We use a similar sequence of inequalities valid for all sufficiently large $n$ in order to prove the lower bound:
\begin{multline*}
\sum_{i=1}^{n} (fg)_i = \sum_{i+j\geq n} f_i g_j
\geq \sum_{\frac{n}{4} \leq i,j \leq \frac{n}{2}} f_i g_j \geq \\
\geq
\lfloor \frac{n}{4}\rfloor^2
C_{-}(f) \left(\lfloor \frac{n}{4 m_f} \rfloor\right)^{\GKdim[f]-1}
 C_{-}(g) \left( \lfloor \frac{n}{4 m_g} \rfloor \right)^{\GKdim[g]-1}
\geq C_{-} n^{\GKdim[f]+ \GKdim[g]},
\end{multline*}
where $(C_-(f),m_f)$ and $(C_-(g),m_g)$ are the constants and integers from
the lower bound in Lemma~\ref{lem::alg_func_coef} chosen for the algebraic functions $f$ and $g$ respectively.
\end{proof}

Finally, we can prove Corollary~\ref{cor::nonsym_constant_growth}.

\begin{proof}
Let $\P$ be a non-symmetric operad with a finite Gr\"obner basis
such that the sequence of dimensions $\dim\P(n)$ has subexponential growth.

Consider the system of equations~(\ref{eq::nonsym_system_eq}).
We know that the functions $y_i(z)$ are algebraic with subexponential growth of coefficients.
Lemma~\ref{lem::alg_func_coef} implies the existence of finite nonzero GK-dimension of all infinite series $y_i$.
Let us reorder the set of unknowns according to the value of their GK-dimension.
I.e. we suppose that $y_1,\ldots, y_{l_0}$ are polynomials that have zero GK-dimension;
$y_{l_0+1},\ldots,y_{l_1}$ have GK-dimension $\alpha_1$; \ \ldots \ ;
$y_{l_{r-1}+1},\ldots,y_{l_r}$ have GK-dimension $\alpha_r$,
where $0<\alpha_1<\ldots<\alpha_r$ and $l_r=N$.
For any given $s$ and for any given $i\in \{l_{s-1}+1,\ldots,l_s\}$
the $i$'th equation in System~\ref{eq::nonsym_system_eq} is of the following form:
\begin{equation}
\label{eq::induct_rat_rec}
y_i = \sum_{j= l_{s-1}+1}^{l_s} p_{j}(z) y_{j} + f_i(z,y_1,\ldots,y_{l_{s-1}}),
\end{equation}
where $p_j(z)$ are polynomials and $f_i$ is a polynomial in ($l_{s-1}+1$) variables
 with non-negative coefficients.
Namely, $f_i$ depends only on the first $l_{s-1}$
unknown variables and does not depend on $y_i$'s with $i>l_{s-1}$.
Indeed, Lemma~\ref{lem::sum_GK_dim} implies that the GK-dimension of the product  $y_{i_1}\ldots y_{i_k}$
is greater than $\alpha_s$
if there exists at least one $i_j>l_{s}$ or if there exists at least two different multiples $y_{i_j}$ and $y_{i_{j'}}$ with
$i_j>l_{s-1}$ and $i_{j'}>l_0$.

It remains to show that the solutions of the system~\eqref{eq::induct_rat_rec} are rational.
Actually, it is well known in the theory of generating functions that the solutions
of a linear system of equations with polynomial
coefficients are rational functions (see, e.g.,~\cite{Stanley_book}).
However, our system is generally not linear, so, we need
an induction argument.

By induction on $s$,  for the vector
${\bf y}_s = (y_{l_{s-1}+1}, \dots, y_{l_s})^T$
 we get the system of the form
$$
{\bf y}_s = A_s{\bf y}_s +B_s,
$$
where $A_s\in Mat_{(l_s-l_{s-1})\times (l_s-l_{s-1})}(z \Z [z])$ and $B_s$ is a vector of rational functions
which are equal to zero at the origin. The vector $B_s$ is obtained by substitution of solutions $y_i$'s with $i<l_{s-1}$ 
which are
rational by the induction hypothesis.
Then
$$
{\bf y}_s = (\Id - A_s)^{-1}B_s,
$$
so that all infinite series $y_i$ are rational functions.
Then the function $G_{\P}=\sum y_i$  is also rational.
\end{proof}

\section{Symmetric and shuffle operads}

\label{sec::sym_operads}
\label{sec::sym}

\subsection{Generating series for a shuffle composition}

\label{sec::operator_C}
\label{sec::sym_add_grading}

The first change we should do in the case of symmetric operads
(compared to what we have explained for non-symmetric operads)
is to change the type of generating series.
Suppose that a subset $\m\subset \b(\Upsilon)$ defines a monomial basis of a shuffle operad
$\p:=\f(\Upsilon)/(\Phi)$ (meaning that $\p$ is the quotient of the free operad
 $\f(\Upsilon)$ by the ideal generated by a subset $\Phi\subset\f(\Upsilon)$).
The exponential generating series of the dimensions of $\p$ is defined as follows:
\begin{gather*}
E_{\p} (z) := \sum_{n\geq 1} \dim \p(n) \frac{z^{n}}{n!}
 = \sum_{v\in \m} \frac{z^{ar(v)}}{ar(v)!}, \text{  where $ar(v)$ means the arity of $v$. }
\end{gather*}
If there is an additional grading of the set of generators $\Upsilon$
such that all relations from $\Phi$ are homogeneous, one can also
 consider an exponential  generating series in two variables.
Let $\m_n = \m \cap \p (n)$ be a homogeneous basis of $\p (n)$ and let $\m_{n,k}$ be the subset of
$\m_n$ consisting of the elements of degree $k$.
Then we define
 \begin{gather*}
E_{\p} (z,t) := 
\sum_{n\geq 1} \frac{z^n}{n!}
\sum_{k\in \Z} \#(\m_{n,k})  t^k =
\sum_{n\geq 1} \frac{z^n}{n!}
\sum_{m \in \m_{n}} t^{\deg m}.
\end{gather*}
One can equivalently define
$$
E_{\p} (z,t) = \sum_{n\geq 1} \frac{H_{\p(n)}(t)}{n!} z^n,
$$
where $H_{\p(n)}(t)$ is the Hilbert series of the graded vector space  $\p(n)$.
As in the non-symmetric case (Section~\ref{sec::nonsym_grading}),
we do not need the additional parameter $t$ in the most of our examples.
We provide our proofs mostly for the one-variable series; 
minimal modifications are need to prove analogous theorems for two-variable series.

Similar to the case of non-symmetric operads, one can define
\emph{a shuffle composition of vector spaces}
$\mu(\p_1,\ldots,\p_m)_{Sh}$ (where $\mu$ is an  element of a free shuffle operad $\F$ and
$\p_1,\ldots,\p_m$ are the graded vector subspaces of $\F$) as the vector space generated by all possible shuffle compositions
\begin{equation}
\label{eq:shuf_comp}
\mu(p_1,\ldots,p_m)_{Sh},
\end{equation}
where $p_i$ belongs to the graded component $\p_i(k_i)$
for all $i =1, \dots, m$ with  $k_1 + \dots+ k_m = n$.
Similarly to the non-symmetric case, each tree from the set of underlying
internally labeled trees in $\mu(\p_1,\ldots,\p_m)_{Sh}$
has $\mu$ as a root vertex and
the $i$-th subtree belongs to the basis of $\p_i$.
The main difference with non-symmetric operads concerns the external labeling.
As was mentioned in the definition of divisibility
in Section~\ref{sec::sym_background},
 the possible external labelings of a tree
from $\mu(\p_1,\ldots,\p_m)_{Sh}$  preserves the local order of minima of leaves in subtrees
(see the proof of Lemma~\ref{l:hilbert_shaf_com} below).

Let $R = \QQ [[z]]$ be the ring of formal power series.
Define a multilinear map $C: R^n \to R$ as follows:
\begin{gather}
\label{eq::C(f,g)}
                     C(f,g)(z) := \int_0^z f'(w) g(w) \, dw
                     \phantom{aaaaaaa}\mbox{ for $n=2$ } \\
        \text{ and \ } C(f_1, \dots , f_n) := C(f_1, C(f_2, \dots, f_{n}) )
         \phantom{aaaaaa}\mbox{for
         $n>2$}.
\label{eq::C(1..n)}
\end{gather}

The next Lemma establishes a connection between this operation and shuffle composition.

\begin{lemma}
\label{l:hilbert_shaf_com}
 Let $\mu, \p_1, \dots, \p_m$ be as above and let $S =
\mu(\p_1, \dots , \p_m)_{\sh}$. Then
$$
               E_S(z) = C(E_{\p_1}, \dots, E_{\p_m}).
$$
\end{lemma}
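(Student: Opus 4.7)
The plan is to compute $E_S(z)$ by enumerating basis elements of $S$ combinatorially and matching term-by-term with the Taylor coefficients of the iterated integral operator $C$. A basis element of $S$ of arity $n$ is specified by an arity tuple $(k_1, \ldots, k_m)$ with $\sum k_i = n$, a monomial $p_i \in \p_i(k_i)$ for each $i$, and a partition $\{1, \ldots, n\} = L_1 \sqcup \cdots \sqcup L_m$ with $|L_i| = k_i$ describing which leaves sit under the $i$-th input of the root vertex $\mu$. The shuffle condition from Subsection~\ref{sec::sym_background} requires $\min L_1 < \min L_2 < \cdots < \min L_m$. Writing $a_i(k) := \dim \p_i(k)$ and letting $N(k_1, \ldots, k_m)$ denote the number of such ordered partitions, one obtains
$$
\dim S(n) = \sum_{k_1 + \cdots + k_m = n} N(k_1, \ldots, k_m) \prod_{i=1}^m a_i(k_i).
$$

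The key combinatorial identity is the recursion
$$
N(k_1, \ldots, k_m) = \binom{n - 1}{k_1 - 1}\, N(k_2, \ldots, k_m),
$$
together with $N(k) = 1$. It is obtained by noting that the leaf $1$ is forced into $L_1$ as the overall minimum, the remaining $k_1 - 1$ elements of $L_1$ are chosen freely from $\{2, \ldots, n\}$, and the complement is partitioned into $L_2, \ldots, L_m$ under the analogous min-ordering constraint. In particular $N(k_1, k_2) = \binom{k_1 + k_2 - 1}{k_1 - 1}$.

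The proof then proceeds by induction on $m$, with the recursive definition $C(f_1, \ldots, f_m) = C(f_1, C(f_2, \ldots, f_m))$ mirroring the recursion on $N$. Setting $H(z) := C(E_{\p_2}, \ldots, E_{\p_m}) = \sum_{j \geq 0} \tilde h_j \frac{z^j}{j!}$, the inductive hypothesis yields $\tilde h_j = \sum_{k_2 + \cdots + k_m = j} N(k_2, \ldots, k_m) \prod_{i \geq 2} a_i(k_i)$. A direct expansion of the integral then gives
$$
\int_0^z E_{\p_1}'(w)\, H(w)\, dw = \sum_{n \geq 1} \frac{z^n}{n!} \sum_{k_1 + j = n} \binom{n-1}{k_1 - 1}\, a_1(k_1)\, \tilde h_j,
$$
and applying the recursion on $N$ identifies the right-hand side with $E_S(z)$, as desired.

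The main thing to be careful about is the verification of the recursion on $N$, i.e.\ that beyond the single global ordering $\min L_1 < \cdots < \min L_m$ and the individual choices of monomials $p_i$, no further symmetry factors intervene. Once this combinatorial count is pinned down, the matching with the binomial coefficient produced by the integral $\int_0^z f'(w) g(w)\, dw$ is a routine manipulation of exponential generating series.
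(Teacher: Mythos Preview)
Your proposal is correct and follows essentially the same approach as the paper's own proof: both identify the shuffle condition as the min-ordering constraint $\min L_1 < \cdots < \min L_m$ on the blocks of the partition, establish the same recursion $N(k_1,\ldots,k_m) = \binom{n-1}{k_1-1}\,N(k_2,\ldots,k_m)$ (the paper calls this number $c(n_1,\ldots,n_m)$), and match it against the binomial coefficient produced by $\int_0^z f'(w)g(w)\,dw$. The only cosmetic difference is that the paper first reduces by linearity to one-dimensional $\p_i$'s before counting, whereas you carry the dimensions $a_i(k)$ through the sum; the combinatorial core is identical.
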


\begin{proof}
By linearity, it is sufficient to check the above relation in
the case of one-dimensional vector spaces $ \p_1, \dots, \p_m$.
 Assume that $\p_i$ is spanned by the basis element $p_i$ of arity $n_i$.
 Let $n = n_1 + \dots +n_m$.
 Then
$$
              E_S(z) = \frac{z^n}{n!} c(n_1, \dots, n_m),
$$
where $c = c(n_1, \dots, n_m)$ is equal to $\dim S(n)$. For each
$k =1, \dots, m $, denote by $N_k$ the $n_k$-element set
$\{ n_1 +\dots +n_{k-1} +1, \dots, n_1 +\dots +n_{k} \}$.
 It follows from the definition (cf.~\cite[Def.~2]{dk}) that
the number $ c(n_1, \dots, n_m)$ is equal to the number of
permutations  $\sigma \in \Sigma_n$ such that $\min \sigma(N_1) < \min
\sigma(N_2)< \dots <\min \sigma(N_m)$ and the restriction of
$\sigma $ to every $N_k$ is an isomorphism of ordered sets.
Therefore,  $ c(n_1, \dots, n_m)$ is equal to the number of
decompositions $[1..n] = Q_1 \cup \dots \cup Q_m$  with $|Q_k| =
n_k$ and $\min Q_1 < \dots <\min Q_m$ (here $Q_k = \sigma N_k$ for
some $\sigma$  as above). The first inequality is equivalent to
the condition $1 \in Q_1$, hence for every $Q_1 \ni 1$ (there are
$\binom{n-1}{n_1-1}$ ways to choose it) there is exactly
$c(n_2,\dots n_m )$ ways to get decompositions $Q_2 \cup \dots
\cup Q_m$ of the same kind for the set $[1..n]\setminus Q_1$.
Thus, we have the relations
$$
       c(n_1, n_2) = \binom{n_1+n_2-1}{n_1-1}
\qquad
\text{ and }
\qquad
       c(n_1, \dots ,n_m) = \binom{n-1}{n_1-1} c(n_2, \dots ,
       n_m).
$$
For the generating functions, we obtain the equalities
\begin{gather*}
          n \frac{z^{n_1+ n_2}}{n!} c(n_1, n_2) = \left( n_1 \frac{z^{n_1}}{n_1!} \right) \frac{z^{n_2}}{n_2!} ,
\\
          n \frac{z^n}{n!} c(n_1, \dots ,n_m) = \left( n_1 \frac{z^{n_1}}{n_1!} \right)
                      \left( \frac{z^{n_2+\dots n_m}}{(n_2+\dots n_m)!}  c(n_2, \dots ,
       n_m) \right),
\end{gather*}
which are equivalent to the desired integration equalities.
\end{proof}

\begin{rema}
\label{rem:shaf_comps_difur}
The  equation in Lemma~\ref{l:hilbert_shaf_com} is
equivalent to the following system of ordinary differential equations for
the functions $h_k(z) = E_{\mu(\p_{k}, \dots , \p_m)_{\sh}}(z)$:
$$
\left\{
 \begin{array}{l}
  h_1'(z) = E_{\p_1}'(z) h_2(z),\\
  h_2'(z) = E_{\p_2}'(z) h_3(z),\\
  \dots\\
  h_{m-1}' = E_{\p_{m-1}}'(z) E_{\p_m}(z)\\
 \end{array}
\right.
$$
with the initial conditions $h_k(0) = 0$ for $0 \le j \le m-k$.
This system uniquely determines the functions $h_1, \dots,
h_{m-1}$. 
\end{rema}

The following easy verified property of the operation $C$ will be used later
in Theorem~\ref{th::bosquet}.

\begin{prop}
\label{popr::C_1st_prop}
One has
$C(f,g)+C(g,f) = fg$
and, generally,
$$
\sum_{\sigma \in S_n}C(f_{\sigma (1)}, \dots , f_{\sigma (n)}) = f_1 f_2 \dots  f_n.
$$
In particular, $C(f,\ldots,f) = \frac{f^n}{n!}$.
\end{prop}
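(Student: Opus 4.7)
The plan is straightforward: reduce both identities to a calculation of an antiderivative and then induct on $n$. The operation $C$ always produces series vanishing at $0$, and in the intended context of operadic generating series (cf.\ Lemma~\ref{l:hilbert_shaf_com}) the inputs $f_i$ have no constant term, so the natural boundary terms from the integral will vanish. I will use $f_i(0) = 0$ throughout as a tacit convention.

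For the base case $n=2$ I will compute directly, recognising $f'g + g'f$ as a total derivative:
\[
C(f,g) + C(g,f) = \int_0^z \bigl( f'(w)g(w) + g'(w)f(w) \bigr) \, dw = \int_0^z (fg)'(w)\, dw = f(z)g(z),
\]
the last equality using $f(0)g(0) = 0$.

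For general $n$ I will induct on $n$. Using the recursive definition~\rf{eq::C(1..n)} and grouping the summands of $\sum_{\sigma \in S_n} C(f_{\sigma(1)}, \dots , f_{\sigma(n)})$ by the value $i = \sigma(1)$, the expression rewrites as
\[
\sum_{i=1}^n \int_0^z f'_i(w) \left( \sum_{\tau} C(f_{\tau(1)}, \dots , f_{\tau(n-1)})(w) \right) dw,
\]
where $\tau$ runs over the bijections from $\{1, \dots , n-1\}$ onto $\{1, \dots , n\} \setminus \{i\}$. The induction hypothesis identifies the inner sum with $\prod_{j \ne i} f_j(w)$, so the total equals $\int_0^z (f_1 \cdots f_n)'(w)\, dw = f_1(z) \cdots f_n(z)$, again invoking that at least one $f_i$ vanishes at $0$.

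The third assertion then follows by specialisation: setting $f_1 = \dots = f_n = f$ collapses the left-hand side of the symmetric identity to $n!\cdot C(f, \dots , f)$, giving $C(f, \dots , f) = f^n/n!$. No step is especially delicate; the only point that requires care is the convention that the arguments vanish at $0$, without which the right-hand sides acquire the correction term $-f_1(0) \cdots f_n(0)$.
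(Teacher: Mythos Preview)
Your proof is correct. The paper does not actually give a proof of this proposition; it is introduced with the phrase ``the following easy verified property'' and left to the reader, so you are supplying what the paper omits. Your observation about the constant-term convention is accurate and worth making explicit: the identity as stated requires $f_1(0)\cdots f_n(0)=0$, which holds throughout the paper since all generating series under consideration begin at $n\ge 1$.

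One remark on alternatives: the sentence immediately following the proposition (``In the view of Lemma~\ref{l:hilbert_shaf_com}, this means that the sum of shuffle compositions \dots\ is equal to their non-symmetric composition'') hints at a combinatorial route. Via Lemma~\ref{l:hilbert_shaf_com} and bilinearity, the identity reduces to the equality $\sum_{\sigma\in S_n} c(n_{\sigma(1)},\dots,n_{\sigma(n)}) = \binom{n_1+\dots+n_n}{n_1,\dots,n_n}$, which is the obvious statement that summing shuffle-type decompositions over all orderings of the blocks counts all ordered set partitions. Your direct analytic induction is equally natural and arguably cleaner, since it avoids the detour through the combinatorial interpretation.
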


In view of Lemma~\ref{l:hilbert_shaf_com}, this means that
the sum of shuffle compositions of some vector spaces with respect to all  orderings is equal to their non-symmetric composition of the same arity.

\subsection{System of differential equations}

\label{sec::sym_main_theorem}

So far we were not able to formulate any statement about
generating series of an arbitrary shuffle operad with a finite
Gr\"obner basis. To establish some properties of these series, we
require additional assumptions, the main of which  is given in
Definition~\ref{def::shufl_regul} below. This assumption holds in a
number of examples, some of which are discussed below.  We have
checked also that for all known symmetric PBW operads there exists
a monomial shuffle operad with the same generating series as the
initial PBW operad but with the following property being satisfied
(see Conjecture~\ref{conj::PBW} below).

\begin{definition}\label{def::shufl_regul}
\label{def:shuffle_regular}
\begin{itemize}
\item The {\it planar skeleton} of a shuffle monomial $m$ in a free shuffle operad $\f$ is
the corresponding planar internally labeled tree, that is, it is
obtained from $m$ by erasing the labels (numbers) of all leaves.

\item A subset $\m$ of monomials in the free shuffle operad
$\f(\Upsilon)$ is called {\it shuffle regular} if for each
monomial $m\in\m$ all monomials with the same shuffle skeleton as $m$
belong to $\m$.

For example, the set
$$
\alpha(\beta(x_1,x_2),\gamma(x_3,x_4)),\alpha(\beta(x_1,x_3),\gamma(x_2,x_4)),
\alpha(\beta(x_1,x_4),\gamma(x_2,x_3))
$$
forms a shuffle regular subset with a shuffle skeleton $\alpha(\beta(,),\gamma(,))$.

\item A monomial operad $\p$ is {\it shuffle regular} if and only if the corresponding monomial basis
is a shuffle regular subset.

It is obvious that a monomial operad is shuffle regular if and only if the set of generating monomial
relations is shuffle regular.

\item Given a set of generators $\Upsilon$ of a  symmetric or shuffle
operad $\p$ and an admissible ordering of monomials, the operad
$\p$ is called {\it shuffle regular} if the set of leading terms
of the corresponding monomial ideal of relations is shuffle
regular. In other words, there exists a reduced Gr\"obner basis of
the ideal of relations of $\p$ in $\f(\Upsilon)$ with shuffle
regular set of leading terms.
\end{itemize}
\end{definition}

\begin{exam}
According to the Gr\"obner bases calculated in~\cite{dk}, one can
see that the operads {\it Com}, {\it AntiCom} and {\it Assoc} are
shuffle regular~\cite[Examples~8,10]{dk} whereas the operads {\it
Lie} and {\it PreLie} are not (with respect to given orders on
shuffle monomials)~\cite[Examples~9,11]{dk}. On the other hand, if
we change the ordering of monomials by  the dual
path-lexicographical ordering, the both operads {\it Lie} and {\it
PreLie} become shuffle regular.

\label{ex:reg_assoc} For instance, consider the shuffle operad of
associative algebras {\it Assoc}. The leading terms of a Gr\"obner
basis of the ideal of its relations are listed
in~\cite[Example~10]{dk}. They are the
shuffle monomials with the shuffle skeletons
$$
 \alpha(\alpha(a_1, -)-),  \alpha(\beta(a_1, -)-), \beta(\beta(a_1, -)-),
 $$
where  $\alpha:  \alpha(x,y) = x\cdot y$ and $\beta: \beta(x,y) =
y\cdot x$ are the generator operations for $\ass$. The cases of
other operads listed above are analogous.
\end{exam}

\begin{theorem}
\label{th:sym_mon_difurs}
 Let $\p$ be a shuffle regular symmetric operad  such that
 the corresponding set of generators and
 a Gr\"obner basis of relations are finite.
 Then  there exists an integer $N$
 and a system of integral equations on $N+1$ functions $y_0 = y_0(z,t),\ldots,y_N = y_N(z,t)$
\begin{equation}
\label{eq::sym_system_eq}
y_i =  t^{a_i}\sum_{s\in [0..N]^{d_i}} q^i_s
     C( y_{s_1}, \ldots,  y_{s_{d_i}})             \phantom{qqq} \mbox{ for }i =1, \dots, N  ,
\end{equation}
such that $E_{\p}(z,t) = \sum_{i=0}^{N}y_i(z,t)$, $y_0 = z$ and
$y_i(0,t)=y_i(z,0)=0$ for all $i>0$. The numbers $q^i_s\in
\{0,1\}$  and the nonnegative integers ${d_i},{a_i}$ and $
N $ are bounded from above by some functions of the degrees and
the numbers of generators and relations of the operad $\p$.
\end{theorem}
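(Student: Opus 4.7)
The plan is to adapt the stump-based decomposition used in the proof of Theorem~\ref{th:nonsym_mon_difurs} to the shuffle setting, replacing non-symmetric composition of monomials by shuffle composition and, consequently, replacing polynomial multiplication of generating series by the integral operator $C$ introduced in Lemma~\ref{l:hilbert_shaf_com}. As in the non-symmetric case, we may replace $\p$ by its monomial replacement $\hat\p$ given by the leading terms of the Gr\"obner basis; this preserves the exponential generating series and, by shuffle regularity of $\p$, the set $\hat\Phi$ of monomial relations is a shuffle regular subset of $\f(\Upsilon)$. Consequently, the monomial basis $\m$ of $\hat\p$ is itself shuffle regular, so membership in $\m$ depends only on the planar skeleton of a shuffle monomial.

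Next I would set $d$ to be the maximal level of a leaf appearing in an element of $\hat\Phi$, and for each $v\in\m$ define its \emph{stump} $b(v)$ as the maximal left divisor of $v$ whose leaves all have level strictly less than $d$. Because membership in $\m$ depends only on planar skeletons, one may view stumps as planar rooted trees internally labeled by $\Upsilon$ (without external leaf labels). Let $b_1,\ldots,b_N$ be the finite list of stumps that actually occur, partially ordered by left-divisibility. Mimicking Subsection~\ref{sec::many_relations}, put
\begin{equation*}
\widetilde{\m_{b_i}} \;=\; \m_{b_i}\;\setminus\;\bigcup_{b_i<b_j}\m_{b_j},
\end{equation*}
where $\m_{b_i}$ is the set of shuffle monomials in $\m$ which are left divisible by any shuffle realization of $b_i$. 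Shuffle regularity guarantees that these sets are well defined in terms of the planar skeletons and that the $\widetilde{\m_{b_i}}$ partition $\m$.

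The decisive step is the decomposition of $\widetilde{\m_{b_j}}$. Any nonzero monomial $v$ with root generator $\mu\in\Upsilon_n$ has a unique expression as a shuffle composition $v=\mu(v_1,\ldots,v_n)_{Sh}$ with $v_k\in\widetilde{\m_{b_{i_k}}}$, and by shuffle regularity the index $j=j_\mu(i_1,\ldots,i_n)\in\{0,1,\ldots,N\}$ of the resulting stump depends only on $\mu$ and on $(i_1,\ldots,i_n)$, not on the particular shuffle substitution of leaves. Setting $j=0$ to encode the vanishing case, this yields a disjoint-union decomposition
\begin{equation*}
\widetilde{\m_{b_j}} \;=\; \bigsqcup_{\mu,\; j_\mu(i_1,\ldots,i_n)=j}
\mu(\widetilde{\m_{b_{i_1}}},\ldots,\widetilde{\m_{b_{i_n}}})_{Sh},
\end{equation*}
which is the shuffle analogue of the recursive partition in Subsection~\ref{sec::many_relations}. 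Applying Lemma~\ref{l:hilbert_shaf_com} to translate each shuffle composition of graded vector spaces into an iterated $C$-composition of exponential generating series produces the desired system~(\ref{eq::sym_system_eq}) with coefficients $q^i_s\in\{0,1\}$ and the initial conditions $y_0=z$, $y_i(0,t)=y_i(z,0)=0$; the bounds on $N$, $d_i$, $a_i$ follow, as in the non-symmetric case, from the fact that only stumps of level $<d$ and arity $<d$ with additional label data of bounded size can occur. Adding the $t$-weights coming from the internal grading of $\Upsilon$ accounts for the factors $t^{a_i}$.

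The main obstacle I expect is precisely the verification that the stump-type decomposition of $\widetilde{\m_{b_j}}$ descends to a closed recursion at the level of planar skeletons. Without the shuffle regularity hypothesis, the value $j$ of the stump of $\mu(v_1,\ldots,v_n)_{Sh}$ can depend on the specific shuffle substitution chosen when composing, so the sets $\mu(\widetilde{\m_{b_{i_1}}},\ldots,\widetilde{\m_{b_{i_n}}})_{Sh}$ would not be pure unions of $\widetilde{\m_{b_j}}$'s and the system~(\ref{eq::sym_system_eq}) would fail to close. Making this argument precise requires carefully relating the planar skeleton of a shuffle composition to the planar skeletons of the constituent monomials together with the combinatorics of minima of leaves under shuffle substitutions; once this is done, the translation to integral equations via Lemma~\ref{l:hilbert_shaf_com} is mechanical.
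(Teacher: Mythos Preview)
Your proposal is correct and follows essentially the same route as the paper: reduce to the monomial replacement, define stumps as planar (shuffle) skeletons of maximal left divisors of bounded level, partition the monomial basis into the sets $\widetilde{\m_{b_i}}$, use shuffle regularity to ensure the index $j_\mu(i_1,\ldots,i_n)$ is independent of the shuffle substitution, and then apply Lemma~\ref{l:hilbert_shaf_com} to convert the resulting disjoint-union recursion into the integral system~(\ref{eq::sym_system_eq}). Your identification of the r\^ole of shuffle regularity---that without it the stump of $\mu(v_1,\ldots,v_n)_{Sh}$ could depend on the particular shuffle permutation---is exactly the point the paper invokes when passing from~(\ref{eq::stump_requr_index_sym}) to the decomposition~(\ref{eq:tildes}).
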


Our proof of Theorem~\ref{th:sym_mon_difurs}
(as well as the proof of Theorem~\ref{th::bosquet} below)
is  close to that of Theorem~\ref{th:nonsym_mon_difurs}.
The main difference is in the counting of the number of external labels of a planar tree.
This reduces to a simple change in the right-hand side
of the formula~(\ref{eq::nonsym_system_eq}):
$$
y_{s_1}\cdot\ldots\cdot y_{s_{d_i}}
                    \rightsquigarrow C(y_{s_1},\ldots,y_{s_{d_i}}).
$$
Namely, one should replace the product of the series by the sign
of the operator $C(\ldots)$ applied to them.
 In order to make our
exposition in symmetric case  self-contained we repeat one of
the proofs-algorithms in all details.

\begin{proof}[Proof of Theorem~\ref{th:sym_mon_difurs}]
Suppose that an operad $\p$ has a finite set of generators
$\Upsilon$ and a finite set of monomial relations $\Phi$. (It is
enough to consider the monomial relations since we are dealing
with generating series, therefore there is no difference between
the relations that form a Gr\"obner basis and the monomial
relations presented by the leading terms of the first ones.)
 Let $d$ be
the maximum level of leaves of elements in $\Phi$ (by the
\emph{level} of a vertex/leaf in a tree we mean the number of
vertices in a path from the root to this vertex/leaf). As was
mentioned in Proposition~\ref{lem::basis_in_sym}, every monomial
$v$ in a free operad $\f(\Upsilon)$ generated by $\Upsilon$  may
be identified with a rooted planar tree whose vertices are marked
by elements of $\Upsilon$ and whose leaves are numbered by natural
numbers $1,\ldots,ar(v)$ in such a manner that this numbering
preserves the ordering of minimums in each internal vertex. Given
such a monomial $v$, by its {\it stump} $b(v)$ we mean the shuffle
skeleton of its maximal monomial left divisor such that the leaves
and the internal vertices of $b(v)$ have levels strictly less than
$d$.

Let $\Stump$ be the set of all stumps of all nonzero monomials in $\p$.
Let $N$ be the cardinality of this set.
The elements $b_1, \dots, b_N$ of $\Stump$ are partially ordered by the
following  relation: $b_i < b_j$ iff $i\ne j$ and $b_i$ is a left
divisor of $b_j$ as a rooted planar tree.
 Let $\m_{b_i}$ be the set of all monomials in (= the
monomial basis of) the right-sided ideal generated by all possible
versions of the internal labeling of a stump $b_i$, and set
$$
\widetilde{\m_{b_i}} = \m_{b_i} \setminus \bigcup_{j: b_i<b_j} \m_{b_j}.
$$
The sets $\widetilde{\m_{b_i}}$ have empty pairwise intersections.
Moreover, the set $\bigcup_{i=1}^N \widetilde{\m_{b_i}} $ forms a
monomial basis of the operad $\p$. We have
$$
E_{\p}(z) = \sum_{i=1}^N y_i(z),
$$
where $y_i(z) = E_{span(\widetilde{\m_{b_i}})}(z)$ is the exponential generating series of the span
 of the set $\widetilde{\m_{b_i}}$. For every element (=operation)
$\mu \in \Upsilon$ of some arity $n$, define the numbers
$j_\mu(i_1, \dots, i_n)$ for all $1\le i_1, \dots, i_n\le N$ as
follows:
\begin{equation}
 \label{eq::stump_requr_index_sym}
       j_\mu(i_1, \dots, i_n) = \left\{
        \begin{array}{ll}
                   0, & \mbox{ if } \mu(\widetilde{\m_{b_{i_1}}}, \dots, \widetilde{\m_{b_{i_n}}})_{Sh} =0 \mbox{ in } \p
                   \\
                   j, & \mbox{ if the stump } b( \mu(\widetilde{\m_{b_{i_1}}}, \dots, \widetilde{\m_{b_{i_n}}})) =
                   b_j.
        \end{array}
                  \right.
\end{equation}
(where the shuffle compositions for monomial sets are defined as
the union of all compositions of type~(\ref{eq:shuf_comp})).
 Note that
the sets of the type $\mu(\widetilde{\m_{b_{i_1}}}, \dots,
\widetilde{\m_{b_{i_n}}})_{Sh}$ have vanishing pairwise intersections.

Let $v$ be a nonzero monomial in $\p$ with the root vertex labeled by $\mu$.
 Then $v \in \mu (\widetilde{\m_{b_{i_1}}}, \dots,
\widetilde{\m_{b_{i_n}}})_{Sh}$ for some $\mu, i_1, \dots, i_n$, that is,
$v = \mu(v_{i_1},\ldots,v_{i_n})_{\sigma}$
where the monomials  $v_{i_j} \in \widetilde{\m_{b_{i_j}}}$ and
a shuffle composition $\sigma$ are uniquely determined by
$v$.
Suppose that the index $j = j_\mu(i_1, \dots, i_n)$ from~(\ref{eq::stump_requr_index_sym})
is different from zero.
Then the shuffle regularity condition of a Gr\"obner basis
and the bound on the level of relations and stumps implies that  $v$  belongs to $\widetilde{\m_{b_j}}$.
We come up with
the following disjoint union decomposition
for all $j=1\ldots N$
\begin{equation}
\label{eq:tildes}
           \widetilde{\m_{b_{j}}} =  \bigcup_{j_\mu(i_1, \dots, i_n) = j} \mu(\widetilde{\m_{b_{i_1}}}, \dots,
                \widetilde{\m_{b_{i_n}}})_{Sh},
                \text{ where } \mu
                \text{ is a root vertex of each } v\in \widetilde{\m_{b_{j}}}.
\end{equation}
Then the equation~(\ref{eq::sym_system_eq}) corresponds to the
generating functions $y_i(z) = E_{span(\widetilde{\m_{b_i}})}(z)$
(where $a_j$ is the value of the corresponding grading on the
operation $\mu$). Similarly to the non-symmetric case, it follows
that $\m_{0}$ consists of the identity operation and all
$\widetilde{\m_{b_i}}$ contain elements of positive degrees
in generators. This implies the initial conditions on series
$y_{i}$.
\end{proof}


\begin{cor}
\label{cor:lin_syst} Let $\p$ be a finitely presented symmetric
operad with a finite shuffle regular Gr\"obner basis of relations.
Then there exists a system of ordinary differential
equations
\begin{equation}
\label{eq:dif_system_cor}
   y_i'(z)  +\sum_{j,l=1}^n q_{j,l}^i y_j y_l' = g_i(z)  
    \mbox{ for }i =1, \dots, n
\end{equation}
where $q_{j,l}^i \in \QQ$ and $g_i(z)\in \QQ[z]$ with the initial
conditions $y_1(0) = \dots=y_n(0)=0$, whose unique formal power
series solution $(y_1(z), \dots, y_n(z))$ satisfies the equality
$$
E_\p(z) =
y_1(z) +\dots +y_N(z)
$$
for some $N\le n$.
\end{cor}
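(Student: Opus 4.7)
The plan is to convert the integral-equation system~\eqref{eq::sym_system_eq} of Theorem~\ref{th:sym_mon_difurs} into a system of ODEs of the required shape by first reducing every iterated $C$-composition to a binary one, and then differentiating using the defining identity $C(f,g)'(z) = f'(z)g(z)$ from~\eqref{eq::C(f,g)}.

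For every tuple $s = (s_1,\ldots,s_{d_i})$ appearing on the right-hand side of~\eqref{eq::sym_system_eq} with $d_i\ge 3$, I would introduce auxiliary variables
$$
h_k^{(s,i)} := C(y_{s_k},\, y_{s_{k+1}},\, \ldots,\, y_{s_{d_i}})\quad\text{for } k = 2,\ldots, d_i-1,
$$
so that the recursive definition~\eqref{eq::C(1..n)} gives $h_k^{(s,i)} = C(y_{s_k},\, h_{k+1}^{(s,i)})$ and $h_{d_i-1}^{(s,i)} = C(y_{s_{d_i-1}},\, y_{s_{d_i}})$. After augmenting the list of unknowns by these finitely many auxiliaries and retaining $z$ itself as one of the $y$'s, every equation (both original and auxiliary) takes the binary shape
$$
y_i = \sum_{j,l} c^i_{j,l}\, C(y_j, y_l).
$$
Differentiating in $z$ and using $C(y_j,y_l)' = y_j' y_l$ turns this into
$$
y_i'(z) = \sum_{j,l} c^i_{j,l}\, y_j'(z)\, y_l(z),
$$
which after a swap of summation labels is exactly of the desired shape $y_i' + \sum q^i_{j,l} y_j y_l' = 0$ with $q^i_{l,j} = -c^i_{j,l}\in \Z\subset \QQ$.

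The only terms that might not immediately fit this form are linear-in-unknown contributions produced when $y_{s_1} = z$ forces $y_{s_1}' = 1$ to appear alone, leaving a stray summand $h$. I would handle these by keeping $z$ as one of the system variables (say as $y_n$), subject to its own equation $y_n' = 1$; this gives a single nonzero inhomogeneity $g_n(z) = 1$ while all other $g_i$ may be taken to be $0$, and every linear term $h$ is rewritten as $h\cdot y_n'$, of the required product shape $y_j y_l'$. Uniqueness of the formal power series solution satisfying $y_i(0)=0$ then follows from coefficient matching: in the $i$-th equation, the coefficient of the top unknown $y_{l,n+1}$ equals $(n+1)\bigl(\delta_{i,l} + \sum_j q^i_{j,l}\, y_{j,0}\bigr) = (n+1)\delta_{i,l}$ because every $y_{j,0} = 0$, so the $(n{+}1)$-st coefficients of all the $y_i$'s are determined by an invertible linear system from the strictly lower ones. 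The identity $E_{\p} = y_1 + \cdots + y_N$ is inherited directly from Theorem~\ref{th:sym_mon_difurs}, with $N$ equal to the number of original variables. The only genuinely delicate point is ensuring that \emph{no} linear-in-unknown term survives the differentiation, since the target form admits only quadratic products $y_j y_l'$ plus polynomial inhomogeneities; once the $z$-as-a-variable trick disposes of this, the argument is a straightforward differentiation of~\eqref{eq::sym_system_eq} together with bookkeeping of the auxiliary variables.
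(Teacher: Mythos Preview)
Your proposal is correct and follows essentially the same approach as the paper: introduce auxiliary variables to reduce the iterated $C$-compositions to binary ones (the paper invokes Remark~\ref{rem:shaf_comps_difur} for exactly this step), then differentiate using $C(f,g)' = f'g$ to obtain the quadratic ODE shape. Your treatment is in fact more careful than the paper's---you explicitly handle the stray linear terms via the $z$-as-variable device and supply a uniqueness argument the paper omits---though to match the conclusion $E_\p = y_1 + \cdots + y_N$ literally you should place the $z$-variable among the first block of unknowns rather than as $y_n$.
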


\begin{proof}
 Let us introduce the functions $h_{i}$ as in
Remark~\ref{rem:shaf_comps_difur} for all combinations of the
power series $y_{i}$  which appear in the equations in the
statement of Theorem~\ref{th:sym_mon_difurs}.
Then the $i$-th equation given
in the statement of Theorem~\ref{th:sym_mon_difurs} is equivalent
to an equation  of the form
$$
y_i'(z) = \sum_{p=1}^n \sum_{s\in [1..N]^p} q^i_s y_{s_1}'(z)
h_{j(s)} + f_i'(z)
$$
(which is obtained by differentiation) with the initial condition
$y_i(0) = 0$. After the re-naming $y_{N+j} = h_j$ and adding
 the equations of the form
$$
y_{N+j}' = y_{k}' y_l
$$
with the same initial conditions $y_{N+j}(0) = 0$ (cf.
Remark~\ref{rem:shaf_comps_difur}), we obtain a system of
equations of the desired form which is equivalent (up to the ghost
variables $y_{N+j} = h_j$) to the initial system.
\end{proof}

\begin{rema}
The number of equations in the system of differential
equations~\eqref{eq:dif_system_cor} can in some cases be reduced. 
To do so, one can apply
to the shuffle regular monomial operads  the same methods as we
have discussed in Subsections~\ref{sec::few_relations}
and~\ref{sec::homology_algorithm} for the non-symmetric operads.
The first of these methods is illustrated in
Example~\ref{exam::NU3} below.
 We leave the detailed description of the algorithms to an interested reader.
\end{rema}

\begin{corollary}
\label{cor::main_cor}
The exponential generating series $E_{\p}(z)$ of a finitely presented operad $\p$ with a shuffle regular Gr\"obner basis
 is differential algebraic over $\QQ$.\footnote{If we consider power series
in two variables $t$ and $z$, then the coefficient ring $\QQ$ is replaced by the ring $\QQ[t]$ with the trivial differentiation
$\frac{d}{dz} t = 0$.}
That is, there exist a number $n\geq 0$ and a non-constant polynomial $\theta$ in $n+2$ variables
such that
              $$
                      \theta(z, E_\p(z), E_\p'(z), \dots, E_\p^{(n)}(z)) = 0.
              $$
\end{corollary}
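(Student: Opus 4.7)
The plan is to bootstrap Corollary~\ref{cor:lin_syst} into a single algebraic differential equation for $E_\p$ via a transcendence-degree argument in differential algebra. So first I would invoke Corollary~\ref{cor:lin_syst} to obtain the system
\begin{equation*}
y_i'(z) + \sum_{j,l=1}^n q_{j,l}^i\, y_j\, y_l' = g_i(z), \qquad i=1,\dots,n,
\end{equation*}
with initial conditions $y_i(0)=0$ and with $E_\p(z)=y_1(z)+\cdots+y_N(z)$ for some $N\le n$.

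Next I would solve this system for the vector of derivatives $(y_1',\dots,y_n')$. Writing the system as $A(y)\cdot (y_1',\dots,y_n')^T = (g_1,\dots,g_n)^T$ with
\begin{equation*}
A(y)_{il} \;=\; \delta_{il}+\sum_{j=1}^n q_{j,l}^i\, y_j\,,
\end{equation*}
the matrix $A(y)$ reduces to the identity at $y=0$, hence $\det A(y)$ is a polynomial in $y_1,\dots,y_n$ with constant term $1$ and in particular is not identically zero. Over the field $K=\QQ(z,y_1,\dots,y_n)$ we may therefore invert $A$, obtaining expressions
\begin{equation*}
y_i' \;=\; R_i(z,y_1,\dots,y_n)\in K, \qquad i=1,\dots,n.
\end{equation*}
This says that the field $K$ is closed under the derivation $d/dz$ (extended trivially to $y_i$ via $R_i$); equivalently, $K$ is a differential subfield containing the formal power series $y_i(z)$.

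With this setup, the transcendence degree of $K$ over $\QQ(z)$ is at most $n$. The element $E_\p = y_1+\cdots+y_N$ lies in $K$, and by the closure under differentiation so do all of its iterated derivatives $E_\p^{(k)}$. The collection $\{E_\p, E_\p', \dots, E_\p^{(n)}\}$ consists of $n+1$ elements of a field of transcendence degree at most $n$ over $\QQ(z)$, so it must be algebraically dependent over $\QQ(z)$. Clearing denominators in any such nontrivial algebraic relation produces a nonzero polynomial $\theta\in \QQ[z,u_0,\ldots,u_n]$ (nonconstant in the $u_i$, since an honest dependence is required) with
\begin{equation*}
\theta\bigl(z,E_\p(z),E_\p'(z),\dots,E_\p^{(n)}(z)\bigr)=0,
\end{equation*}
which is precisely differential algebraicity. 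For the two-variable version one repeats the same argument with $\QQ$ replaced by $\QQ[t]$ and the derivation acting as $d/dz$ only.

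The main obstacle is ensuring that the linear system for the derivatives $y_i'$ is genuinely invertible so that $K$ really is closed under $d/dz$; once the identity-at-the-origin observation handles this, the rest is the standard differential-algebra transcendence bound and poses no further difficulty. A secondary (cosmetic) point is checking that the resulting $\theta$ is nonconstant, but this is automatic from the very definition of algebraic dependence.
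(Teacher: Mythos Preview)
Your argument is correct and takes a genuinely different route from the paper. The paper invokes Artin's Approximation Theorem for differential equations (Denef--Lipshitz~\cite[Theorem~2.1]{dl}): given the system of Corollary~\ref{cor:lin_syst}, that theorem produces, for each $a\ge 1$, a differentially algebraic solution $(\widetilde y_1,\dots,\widetilde y_n)$ agreeing with $(y_1,\dots,y_n)$ modulo $z^a$; taking $a=1$ and appealing to uniqueness of the zero-initial-condition solution forces $\widetilde y_i=y_i$, whence each $y_i$ (and hence $E_\p=\sum y_i$) is differentially algebraic. Your approach instead stays entirely inside elementary differential algebra: the invertibility of $A(y)$ near the origin shows $K=\QQ(z,y_1,\dots,y_n)$ is closed under $d/dz$, and then a transcendence-degree count forces $E_\p,E_\p',\dots,E_\p^{(n)}$ to be algebraically dependent over $\QQ(z)$. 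This is both more self-contained (no appeal to a deep approximation theorem) and more informative, since it yields an explicit bound $n$ on the order of the differential equation in terms of the size of the system from Corollary~\ref{cor:lin_syst}. The paper's route, on the other hand, would generalise more readily to systems not in the solved-for-derivatives form, where your matrix-inversion step might fail. One cosmetic remark: the phrase ``extended trivially to $y_i$ via $R_i$'' is a bit garbled---you simply mean that the ambient derivation $d/dz$ on $\QQ((z))$ restricts to $K$ because $y_i'=R_i(z,y_1,\dots,y_n)\in K$.
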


\begin{proof}
By Artin's Approximation Theorem for differential
equations~\cite[Theorem~2.1]{dl}, for each positive integer $a$
there exists another power series solution $(\widetilde y_1(z),
\dots, \widetilde y_n(z))$ of the system~(\ref{eq:dif_system_cor})
such that all functions $\widetilde y_i$ are differential
algebraic and
$$
\widetilde y_i (z) = y_i(z) \quad  \mod \quad z^a
$$
for all $i = 1, \dots, n$.
Taking $a=1$ and using the fact that the solution with a zero constant term is unique,
we conclude that $\widetilde y_i (z) = y_i(z) $ for all $i$.
Since the sum of differential algebraic functions is again differential algebraic,
we conclude that $E_\p(z) =
y_1(z) +\dots +y_N(z)$ satisfies a differential algebraic equation.
\end{proof}


\subsection{Relation sets with additional symmetries}

\label{sec::sym_bosquets}

In the munber of cases a Gr\"obner bases of a shuffle regular operad has additional symmetries
which imply restrictions on the corresponging generating series
(Theorem~\ref{th::bosquet} and Corollary~\ref{cor::bosquet} below).

\begin{definition}
\label{def::sym_regular}
\begin{itemize}
\item The {\it tree skeleton} of a shuffle monomial $m$ in the free shuffle operad $\f$ is
the corresponding internally labeled rooted (non-planar) tree.
That is,  we erase the labels (numbers) of all leaves and forget
the planar planar representative of the  labeled tree $m$.

\item A subset $\m$ of monomials in the free shuffle operad $\f(V)$ is called {\it symmetric regular}
if for each monomial $m\in\m$ all monomials with the same tree skeleton belongs to $\m$.

For example, the set
$$
\left\{
\begin{array}{c}
\alpha(\beta(x_1,x_2),\gamma(x_3,x_4)),\alpha(\beta(x_1,x_3),\gamma(x_2,x_4)),
\alpha(\beta(x_1,x_4),\gamma(x_2,x_3)), \\
\alpha(\beta(x_2,x_3),\gamma(x_1,x_4)),\alpha(\beta(x_2,x_4),\gamma(x_1,x_3)),
\alpha(\beta(x_3,x_4),\gamma(x_1,x_2))
\end{array}
\right\}
$$
forms a symmetric regular subset with a tree skeleton
$\alpha(\beta(\textrm{-},\textrm{-}),\gamma(\textrm{-},\textrm{-}))$.

\item
The definitions of \emph{symmetric regular monomial operad} and
 arbitrary \emph{symmetric regular operad} are analogous to the ones
given in Definition~\ref{def::shufl_regul} for shuffle regular case.
\end{itemize}
\end{definition}

Obviously, the standard monomial basis of a symmetric regular
operad is again symmetric regular.

\begin{theorem}
\label{th::bosquet}
If the set of leading terms of a finite Gr\"obner basis of a shuffle regular operad $\p$ form a symmetric regular set
then the corresponding
system of recursive differential algebraic equations~\eqref{eq::sym_system_eq}
reduces to the system
of algebraic equations
\begin{equation}
\label{eq::bosquet:sym_system_eq}
y_i =  t^{a_i}\frac{1}{d_i !}\sum_{s\in [0..m]^{d_i}} q^i_s \cdot
      y_{s_1}\cdot \ldots\cdot  y_{s_{d_i}}             \phantom{qqq} \mbox{ for each }i =1, \dots, N
\end{equation}
for some formal power series $y_1, \dots, y_N$ with non-negative coefficients such that  $E_{\p}(z) = m_1 y_1 + \dots +m_n y_N $
for some integers $m_1,\ldots,m_n$.
\end{theorem}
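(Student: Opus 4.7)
The plan is to start from the system~\eqref{eq::sym_system_eq} produced by Theorem~\ref{th:sym_mon_difurs} and exploit the enhanced symmetry granted by the symmetric regularity hypothesis. Since the set of leading monomials of the Gr\"obner basis is now closed under arbitrary re-numbering of the leaves (not only under shuffle substitutions), I would first adapt the stump algorithm used in the proof of Theorem~\ref{th:sym_mon_difurs} so that the resulting set of stumps is itself closed under permuting the order of the children at every internal vertex. I would then organize these stumps into $S_{d}$-orbits $\widetilde{b}_1,\dots,\widetilde{b}_N$, record the orbit sizes as the integers $m_1,\dots,m_N$ appearing in the statement, and let $y_j$ denote the exponential generating series of monomials lying over a chosen planar representative of $\widetilde{b}_j$. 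Symmetric regularity of the basis guarantees that this series is the same for every planar representative in the orbit, so that $E_{\p} = m_1 y_1 + \dots + m_N y_N$.

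Next I would verify that, after this grouping, the coefficients $q^i_s$ in~\eqref{eq::sym_system_eq} become $S_{d_i}$-invariant as functions of the tuple $s=(s_1,\dots,s_{d_i})$. Concretely, given a decomposition of a stump in $\widetilde{b}_i$ as $\mu(b_{s_1},\dots,b_{s_{d_i}})_{\sh}$ at its root, one must check that every permutation of the subtrees $(b_{s_1},\dots,b_{s_{d_i}})$ likewise yields a valid decomposition of some planar realization of $\widetilde{b}_i$. This step is the heart of the argument and is exactly where the extra assumption is used: the symmetric regularity of the leading terms, combined with the choice of stump orbits closed under child permutations, forces the function $j_\mu$ of~\eqref{eq::stump_requr_index_sym} to descend to the orbit quotient and to be invariant under reordering its arguments.

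With $q^i_s$ now symmetric, I would invoke Proposition~\ref{popr::C_1st_prop}: averaging over the $S_{d_i}$-action gives
\begin{equation*}
\sum_{s\in[0..N]^{d_i}} q^i_s\, C(y_{s_1},\dots,y_{s_{d_i}})
 = \frac{1}{d_i!}\sum_s q^i_s \sum_{\sigma\in S_{d_i}} C(y_{s_{\sigma(1)}},\dots,y_{s_{\sigma(d_i)}})
 = \frac{1}{d_i!}\sum_s q^i_s\, y_{s_1}\cdots y_{s_{d_i}}.
\end{equation*}
Substituting this identity into~\eqref{eq::sym_system_eq} immediately converts each integral/differential relation into the desired algebraic relation~\eqref{eq::bosquet:sym_system_eq}. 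Non-negativity of the coefficients of the resulting $y_j$ is inherited from the counting interpretation of each $y_j$ as a monomial generating series.

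The main obstacle I foresee is the bookkeeping in the middle step: passing from planar stumps indexing the variables of Theorem~\ref{th:sym_mon_difurs} to the new orbit-indexed variables, one must check that the original equations descend consistently to the quotient so that each grouped variable really satisfies a single equation with symmetric $q^i_s$. This reduces to the compatibility of the stump algorithm with the $S_d$-action on planar realizations of a given non-planar labeled tree, which in turn follows from applying the symmetric regularity hypothesis directly to the leading terms of the Gr\"obner basis. Once this is pinned down, the rest of the proof is a short manipulation via Proposition~\ref{popr::C_1st_prop}.
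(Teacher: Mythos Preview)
Your proposal is correct and follows essentially the same route as the paper: group the planar stumps of Theorem~\ref{th:sym_mon_difurs} by their underlying non-planar tree skeleton, use symmetric regularity to see that all planar representatives of a given tree skeleton have the same generating series and that the resulting coefficients $q^i_s$ are $S_{d_i}$-invariant, and then apply Proposition~\ref{popr::C_1st_prop} to replace the symmetrized $C$-expressions by ordinary products. The paper's write-up is terser (it simply observes that changing the order of subtrees at the root alters the planar skeleton but not the tree skeleton), but the logical content is identical to yours.
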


\begin{proof}
Consider the algorithm given in the proof of
Theorem~\ref{th:sym_mon_difurs} applied to a symmetric regular
operad $\p$. Note that the generating series of
$\widetilde{\m_{b}}$ and $\widetilde{\m_{b'}}$ coincide if the
stumps $b$ and $b'$ have the same tree skeleton. Moreover, for each
collection of monomials $p_1,\ldots,p_n$ and each $n$-ary
operation $\mu$ and a permutation $\sigma\in S_n$ there is a
bijection between the tree skeletons of the elements of the set
$\mu(p_1,\ldots,p_n)_{Sh}$ and the tree skeletons of the elements of the set
$\mu(p_{\sigma(1)},\ldots,p_{\sigma(n)})$. Consider a relation
from the system~\eqref{eq::sym_system_eq} which corresponds to a
given stump $b_i$:
$$
 y_{i} = t^{a_{i}}\sum_{s\in [0..N]^{d}} q^i_s C( y_{s_1},\ldots,y_{s_d}).
 $$
Changing the subtrees of the root operation in a shuffle monomial
 one  may change the planar skeleton, whereas the tree skeleton remains the same.
Therefore,
$$
y_{i} = t^{a_{i}}\sum_{s\in [0..N]^{d}} q^i_s \frac{1}{d!} (\sum_{\sigma\in S_d} C( y_{s_{\sigma(1)}},\ldots,y_{s_{\sigma(d)}})) =
 \frac{t^{a_{i}}}{d!}\sum_{s\in [0..N]^{d}} q^i_s y_{s_1}\cdot\ldots\cdot y_{s_d}
$$
The last equality follows from Proposition~\ref{popr::C_1st_prop}.
Thus, the system~(\ref{eq::sym_system_eq}) of integration
relations can be replaced by the system of algebraic equations.
Moreover the equations are numbered by the appropriate subset of
tree-skeletons.
Again, these algebraic equations are numbered
by the tree skeletons of the monomials whose levels are less than
the maximal level of the relations.
\end{proof}

Theorem~\ref{th::bosquet}  is illustrated in Example~\ref{exam::NU3}.

Analogous to the case of non-symmetric operads, the classical
elimination theory implies the following
\begin{corollary}
\label{cor::bosquet}
 The exponential generating series $E_{\p}$ of a symmetric regular finitely presented operad $\p$
is an algebraic function.
\end{corollary}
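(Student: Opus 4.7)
The plan is to deduce Corollary~\ref{cor::bosquet} directly from Theorem~\ref{th::bosquet} by the same elimination-theoretic argument that was used in the nonsymmetric case (Lemma~\ref{lem:alg_elimination}), essentially a formal corollary of the fact that any coordinate of an isolated solution of a polynomial system over the field of rational functions is algebraic.

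First, I would invoke Theorem~\ref{th::bosquet} to replace the integro-differential system~\eqref{eq::sym_system_eq} by the polynomial system~\eqref{eq::bosquet:sym_system_eq}, that is
$$
y_i \;=\; t^{a_i}\frac{1}{d_i!}\sum_{s\in [0..N]^{d_i}} q^i_s\, y_{s_1}\cdots y_{s_{d_i}},\qquad i=1,\dots,N,
$$
together with the linear relation $E_\p(z) = \sum m_i y_i$. Writing this system as $F = G(F)$ with $F=(y_1,\dots,y_N)$, the formal power series $y_i$ (with $y_i(0)=0$) form the unique solution among $N$-tuples of power series without constant term, because the Jacobian of $\mathrm{Id}-G$ is invertible at the origin.

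Next I would apply Lemma~\ref{lem:alg_elimination} verbatim: the hypotheses there only ask for a system of the shape $f_i t^{\mu_i} = g_i(f_1,\dots,f_n)$ with homogeneous polynomial $g_i$ of degree $\geq 2$, which is exactly the form of~\eqref{eq::bosquet:sym_system_eq} (the factor $1/d_i!$ is harmless, as the lemma works over $\mathbb{Q}(t,z)$). The lemma yields a non-constant polynomial $Q_i \in \mathbb{Q}[t,z][x]$ with $Q_i(y_i)=0$ for each $i$. Since a $\mathbb{Q}(t,z)$-linear combination $E_\p = \sum m_i y_i$ of elements algebraic over $\mathbb{Q}(t,z)$ is algebraic, we conclude that $E_\p(z,t)$ satisfies a non-trivial algebraic equation over $\mathbb{Q}[t,z]$; specializing $t=1$ (or dropping $t$ in the one-variable setting) gives algebraicity of $E_\p(z)$.

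The only real subtlety, as in the nonsymmetric case, is ensuring that the elimination does not kill the coordinate we care about, i.e.\ that the output polynomial $Q$ is non-trivial when evaluated on $y_i$. This is handled exactly as in the proof of Lemma~\ref{lem:alg_elimination}: because the variety of solutions of $F=G(F)$ is zero-dimensional over an algebraic closure of $\mathbb{Q}(t,z)$, every coordinate function is algebraic, and Bezout together with the effective Nullstellensatz furnishes an explicit bound on the degree. I would not expect any genuinely new obstacle here beyond repackaging the nonsymmetric argument, since Theorem~\ref{th::bosquet} has already absorbed all the symmetric-regular combinatorics into producing a purely polynomial (rather than integral) system.
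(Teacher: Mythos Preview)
Your proposal is correct and follows exactly the paper's route: the paper's proof of Corollary~\ref{cor::bosquet} is the single sentence ``Analogously to the case of non-symmetric operads, the classical elimination theory implies the following,'' i.e.\ invoke Theorem~\ref{th::bosquet} to obtain the polynomial system~\eqref{eq::bosquet:sym_system_eq} and then apply Lemma~\ref{lem:alg_elimination} just as in the proof of Theorem~\ref{th-nonsym}. You have simply written out that analogy in full, including the closure of algebraic functions under $\QQ(t,z)$-linear combination for the final step $E_\p=\sum m_i y_i$.
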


\subsection{Operads of restricted growth}

\label{sec::sym_constant_growth}

We present here an application of the above theory to symmetric
and shuffle  operads of a restricted growth. We say that a
sequence $\{ a_n \}_{n\ge 0}$ of nonnegative real numbers has 
\emph{subfactorial} growth if for all positive constants $A,B>0$
there exists a constant $C>0$ such that $a_n < C
(\frac{n}{A})^{\frac{n}{B}}$ for all sufficiently large $n$. In
other words, this means that the growth
 $[ a_n ]$ of this sequence is less than the growth $[n!]$.
In particular, if the sequence is bounded by an exponent $C^n$
then its growth is subfactorial.

\begin{corollary}
\label{cor::sym_constant_growth}
 Let $\p$ be a symmetric or shuffle operad with a shuffle regular finite Gr\"obner basis.
Suppose that the growth of the sequence of dimensions $\dim
\p(n)$ is subfactorial. Then the exponential generating series
$E_{\p}$ satisfies a linear differential equation with
constant coefficients. Equivalently, the usual generating series
$G_{\p}=\sum_{n\geq 1} \dim\p(n) z^n$  is rational. In
particular, the sequence  $\dim\p(n)$ has 
exponential growth or polynomial growth with integer exponent.
\end{corollary}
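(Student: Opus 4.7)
My plan is to mimic the proof of Corollary~\ref{cor::nonsym_constant_growth}, replacing the algebraic system~\eqref{eq::nonsym_system_eq} by the integral system~\eqref{eq::sym_system_eq} supplied by Theorem~\ref{th:sym_mon_difurs}. Since the coefficients $q_s^i \in \{0,1\}$ are nonnegative, each unknown $y_i(z) = \sum_n a_n^{(i)} z^n/n!$ counts monomials in a piece $\widetilde{\m_{b_i}}$ of the monomial basis, so every $a_n^{(i)}$ is a nonnegative integer bounded above by $\dim\p(n)$, and in particular is subfactorial. Writing $\tilde y_i(z) := \sum_n a_n^{(i)} z^n$, one has $G_\p = \sum_i \tilde y_i$, and it suffices to prove that each $\tilde y_i$ is rational. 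The equivalent statement that $E_\p$ satisfies a linear ODE with constant coefficients then follows from the partial-fraction decomposition of $G_\p$ together with the identity $\sum_n n^k \beta^n z^n/n! = p_{k,\beta}(z)\, e^{\beta z}$ for an explicit polynomial $p_{k,\beta}\in\QQ[z]$.

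The first technical ingredient is the additivity of exponential growth rates under the operator $C$. Define $\alpha_i := \limsup_n (a_n^{(i)})^{1/n}$, which is finite by the subfactorial bound and coincides with the reciprocal of the radius of convergence of $\tilde y_i$. A coefficientwise analysis of the weighted convolution $[z^K/K!]\, C(f,g) = \sum_{n+m=K,\, n\ge 1}\binom{K-1}{n-1} a_n b_m$ (via a Laplace-type maximisation) shows that for sequences with nonnegative coefficients the rate of $C(f,g)$ equals $\alpha_f + \alpha_g$; iterating, $C(y_{s_1},\ldots,y_{s_d})$ has rate $\sum_j \alpha_{s_j}$. This is the required analog of Lemma~\ref{lem::sum_GK_dim}. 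Ordering the unknowns $y_i$ by $\alpha_i$ and inspecting~\eqref{eq::sym_system_eq}, we see that at each level the only $C$-monomials contributing to the top rate $\alpha_i$ are those with a single index of rate $\alpha_i$ and all other indices of rate zero (i.e.~polynomial $y_{s_j}$); every other nontrivial $C$-composition strictly increases the rate. The system thus decouples, as in~\eqref{eq::induct_rat_rec}, into a finite chain of linear integral systems, one per rate, whose inhomogeneities come from strictly lower rates and are, by induction, polynomials times exponentials $e^{\beta z}$ with $\beta<\alpha_i$.

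Each linear system at a fixed rate $\alpha$ is, after differentiation, a linear ODE system with polynomial coefficients whose solutions with exponential rate exactly $\alpha$ and nonnegative integer EGS coefficients must be finite $\QQ[z]$-linear combinations of $e^{\alpha z}$, with particular integrals from the lower-rate inhomogeneities giving contributions from smaller exponentials. Consequently each $y_i$ is a finite sum $\sum_{\beta \le \alpha_i} q_{i,\beta}(z)\, e^{\beta z}$ with $q_{i,\beta}\in\QQ[z]$; the corresponding $\tilde y_i$ is rational, and so is $G_\p$. The main obstacle is the \emph{strict} additivity of the exponential rate under $C$-composition of two nontrivial factors, since the binomial weights $\binom{K-1}{n-1}$ can in principle contribute factorial-type growth if the two input sequences are not carefully disentangled. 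I expect this to be the longest technical step and to rest on a refined asymptotic estimate for binomially weighted convolutions analogous to the proof of Lemma~\ref{lem::sum_GK_dim}.
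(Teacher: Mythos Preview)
Your overall architecture --- stratify the unknowns of system~\eqref{eq::sym_system_eq} by a growth invariant, show that at each level the system becomes linear, and solve inductively --- is exactly the paper's strategy. But your choice of invariant, the exponential rate $\alpha_i=\limsup (a_n^{(i)})^{1/n}$, is too coarse, and the gap is not where you think it is.

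The real problem is the step where you assert that the linear system at a fixed rate has solutions that are finite $\QQ[z]$-combinations of exponentials (equivalently, rational ordinary generating functions). At rate~$\alpha$, your linear terms are $C$-monomials containing one rate-$\alpha$ factor $y_j$ and otherwise only polynomial factors. But $C(y_j,z)$ and $C(z,y_j)$ both have rate $\alpha_j+0=\alpha_j$, while on the ordinary-generating-function side they become $z^2\,\tilde y_j'(z)$ and $z\,\tilde y_j(z)$ respectively: the first is a genuine differential operator. More generally, any polynomial factor of arity $\ge 2$, or having $y_j$ not in the last slot, inserts a nontrivial binomial coefficient $\binom{K-1}{k}$ (polynomial in $K$) into the recursion, so your ``linear'' system is a linear \emph{differential} system in the ordinary generating functions, whose solutions are holonomic but not a priori rational. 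Your rate invariant cannot see this distinction, and the claim that the subfactorial bound plus nonnegativity forces the solutions to be polynomial-times-exponential is exactly what needs proof.

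The paper supplies the missing mechanism through a cycle argument on the dependence graph (Lemma~\ref{lem::edge_growth} and Corollary~\ref{cor::wheel_growth}). Along any edge $b\to b'$ coming from a $C$-monomial, the binomial count gives $y_{b,n}\ge a(n)\,y_{b',n-d}$ with $a(n)$ a product of binomial coefficients; this $a(n)$ is nonconstant unless the monomial has the specific shape $C(z,\ldots,z,y_{b'})$. Iterating such an inequality around a cycle yields growth $\ge (n/A)^{n/B}$, contradicting subfactoriality. Hence on each strongly connected piece only monomials $C(z,\ldots,z,y_{b'})$ survive, and these correspond to multiplication by $z^{r-1}$ on the ordinary side --- a genuinely algebraic linear relation. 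Your rate stratification can be repaired by inserting this cycle argument inside each rate level, but at that point it has become the paper's proof.
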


The proof is similar to the proof of
Corollary~\ref{cor::nonsym_constant_growth}. The key point is to
reduce the system of equations~(\ref{eq::sym_system_eq}) for
the generating series to a system of linear recursive equations on
coefficients.

The series $y_b = \sum_{n\geq 0} y_{b,n} \frac{z^n}{n!}$
is an exponential generating series of the set of monomials in
$\widetilde{\M_{b}}$, hence the right hand side of any equation
from the system~(\ref{eq::sym_system_eq}) has nonnegative
coefficients. Therefore, for each stump $b$ the coefficient
$y_{b,n}$ of the exponential series $y_b$ is a nonnegative
integer bounded by the dimension $\P(n)$. Each equation is
numbered by an appropriate stump. However, the proof presented
below uses the type of the system rather than the combinatorics of
stumps. The statement of
Corollary~\ref{cor::sym_constant_growth} is still true in much
more general cases arising in some areas of combinatorics.

 There is a standard combinatorial data attached to the system~(\ref{eq::sym_system_eq}).
(See, e.g.,~\cite[p.~33]{anal_comb}). We say that a stump $b$
\emph{depends} on a stump $b'$ if the right hand side of the
corresponding recursive equation for generating series $y_{b}$
contains a nonzero summand of the form $C( y_{s_1}, \ldots,
y_{s_{m}})$, where some ${s_k}$ is equal to ${b'}$. In other words, the right
hand side of the equation~(\ref{eq::sym_system_eq}) corresponding
to $y_{b}$ depends on $y_{b'}$ in a nontrivial way. We say that
the dependence is {\em nonlinear} if the right hand side of the
recursive equation for $y_{b}$
 contains a multiple
$ C( y_{s_1}, \ldots,y_{s_k}, \ldots, y_{s_{m}})$,
 where $y_{s_k} = y_{b}$ and at least one of the  series $y_{s_j}$
 for $j\ne k$ is infinite.

Let us define a graph of dependencies for a system of recursive
equations~\eqref{eq::sym_system_eq}. It is  a directed graph with
vertices numbered by all possible stumps. A pair of stumps $b$ and
$b'$ is connected by an arrow if the stump $b$ depends on $b'$.
This graph is called the \emph{dependence graph} and will be
denoted by $\Gamma(\p)$. Whereas that dependence graph does not
contain all information about the system, it sometimes gives
growth conditions which we illustrate in the lemma below.

\begin{lemma}
\label{lem::edge_growth}
 Given an edge $b\rightarrow b'$ in the dependence graph $\Gamma(\p)$,
  there exists an integer $d$ and a polynomial $a(n)$
with positive integer values for all sufficiently large integers
$n$ such that the following inequality is satisfied for the
coefficients of the corresponding generating series:
\begin{equation}
y_{b,n} \geq a(n) y_{b',n-d}.
 \end{equation}
Moreover, if the dependence $b\rightarrow b'$ is nonlinear one may
choose the polynomial $a(n)$ to be nonconstant.
\end{lemma}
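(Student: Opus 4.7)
The plan is to exploit the fact that every series in the system~\eqref{eq::sym_system_eq} has non-negative coefficients, so any single summand on the right-hand side gives a lower bound on $y_{b,n}$. The edge $b\rightarrow b'$ in $\Gamma(\p)$ provides a summand $C(y_{s_1},\ldots,y_{s_m})$ in the equation for $y_b$ with $s_k = b'$ for some position $k$. Discarding all other summands and extracting the coefficient of $z^n/n!$ using the formula of Lemma~\ref{l:hilbert_shaf_com} yields
$$
y_{b,n} \;\geq\; \sum_{n_1+\dots+n_m = n} c(n_1,\dots,n_m)\, y_{s_1,n_1}\cdots y_{s_m,n_m},
$$
where $c(n_1,\dots,n_m)$ is the shuffle multinomial from the proof of that lemma.

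Next, for each $j\neq k$ I would fix an arity $r_j$ with $y_{s_j,r_j}\geq 1$; such $r_j$ exists because each $y_{s_j}$ is a non-zero power series with non-negative integer coefficients. Setting $d := \sum_{j\neq k} r_j$ and retaining only the single partition $(r_1,\dots,r_{k-1},n-d,r_{k+1},\dots,r_m)$ in the sum above produces
$$
y_{b,n} \;\geq\; c(r_1,\dots,r_{k-1},n-d,r_{k+1},\dots,r_m)\cdot y_{b',n-d}\cdot \prod_{j\neq k} y_{s_j,r_j}.
$$
Iterating the recursion $c(n_1,\dots,n_m) = \binom{n-1}{n_1-1}\, c(n_2,\dots,n_m)$ proved in Lemma~\ref{l:hilbert_shaf_com}, the multinomial factor unfolds into a product of binomial coefficients which, once all arities except the $k$-th are fixed, forms a polynomial in $n$ with positive integer values for $n$ sufficiently large. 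Taking $a(n)$ to be this polynomial multiplied by the positive integer $\prod_{j\neq k} y_{s_j,r_j}$ yields the required inequality.

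For the nonlinearity clause, the assumption supplies an infinite series $y_{s_j}$ with $j\neq k$, so the arity $r_j$ may be chosen arbitrarily large while keeping $y_{s_j,r_j}\geq 1$. Unfolding the recursion, the factors before position $k$ are binomials $\binom{n-r_1-\dots-r_{i-1}-1}{r_i-1}$, which are polynomials in $n$ of degree $r_i-1$; the factor at position $k$, after the identity $\binom{N-1}{n_k-1} = \binom{N-1}{\sum_{i>k} r_i}$ with $N = n-\sum_{i<k}r_i$, is a polynomial in $n$ of degree $\sum_{i>k} r_i$; and all factors after position $k$ reduce to constants in $n$. Arranging that the large $r_j$ satisfies $r_j\geq 2$ forces at least one of these contributions to the degree to be positive, so $a(n)$ is non-constant. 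The main obstacle I anticipate is precisely this degree bookkeeping—keeping straight which binomial factors remain polynomials in $n$ after the arities are fixed and which collapse to constants; the remainder of the argument is a routine reduction to a single non-negative summand in a sum with non-negative terms.
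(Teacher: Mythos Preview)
Your proof is correct and follows essentially the same approach as the paper's: both fix the arities (the paper fixes concrete monomials $v_i$, you fix arities $r_j$ with $y_{s_j,r_j}\ge 1$) in every slot of the shuffle composition except the one carrying $y_{b'}$, then invoke the shuffle multinomial formula from Lemma~\ref{l:hilbert_shaf_com} to obtain the product of binomial coefficients, and finally observe that in the nonlinear case one of the fixed arities can be taken $\ge 2$ so that the product has positive degree in $n$. Your version is phrased directly in terms of the system~\eqref{eq::sym_system_eq} rather than the underlying sets $\widetilde{\m_{b_i}}$, which makes the bookkeeping slightly cleaner, but the substance is identical.
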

\begin{proof}
 By definition (see~\eqref{eq:tildes} for details),
  there is an arrow $b\rightarrow b'$ in the dependence graph
if and only if there exists a collection of stumps
$(b_1,\ldots,b_r)$ such that $b'= b_j$ for some $j$ and there is
an embedding of sets
$$\mu(\widetilde{\m_{b_1}},\ldots,\widetilde{\m_{b_r}})_{Sh} \subset \widetilde{\m_b}.$$
For each $i\neq j$ let us choose an element
$v_i\in\widetilde{\m_{b_i}}$.
 Let $d_i$ be the arity of the corresponding monomial $v_i\in\p$.
Consider the subset
$$
\m_{b\mapsto b'}:= \left\{
\begin{array}{c}
 \mu(w_1,\ldots,w_{j-1},\widetilde{\m_{b'}},w_{j+1},\ldots,w_r )_{Sh}  \\
\text{s.t. } w_i \text{ has the same shuffle skeleton as } v_i
\text{ for all }i\ne j
\end{array}
\right\}
\subset \mu(\widetilde{\m_{b_1}},\ldots,\widetilde{\m_{b_r}})_{Sh}.
$$
The number of elements of arity $n$ in the set $\m_{b\mapsto b'}$
is less than or equal to the number of elements of the same
arity in $\widetilde{\m_b}$. The detailed counting of the elements
in $\m_{b\mapsto b'}$ using Lemma~\ref{l:hilbert_shaf_com}
provides the  inequality
\begin{equation*}
\begin{split}
 \binom{n-1}{d_1-1}\times\ldots\times & \binom{n-\sum_{i\leq j-2} d_i -1}{d_{j-1}-1}
\times \binom{n- \sum_{i\leq j-1} d_i -1}{\sum_{i>j} d_i}\times  \\
& \times \binom{\sum_{i\geq j} d_i -1}{d_{j+1}-1}
\times \ldots \times \binom{d_r-1}{d_r-1} y_{b',n-\sum_{i\neq j} d_i}
\end{split}
\leq y_{b,n},
\end{equation*}
which shows the existence of $d$ and $a(n)$ as prescribed in the
lemma. Here each binomial coefficient in the left hand side  is
equal to number of shuffle monomials in the corresponding set
$\widetilde{\m_{b_i}}$ with shuffle skeleton $v_i$. Thus,  if the
dependence $b\rightarrow b'$ is nonlinear, then there exists a
collection of monomials $v_i$
such that the corresponding product of binomial coefficients is
different from constant.
\end{proof}

Lemma~\ref{lem::edge_growth} has a simple corollary for operads with small growth of dimensions:
\begin{corollary}
\label{cor::wheel_growth}
 If the growth of the dimensions $\p(n)$ is subfactorial then any loop
$b_1\rightarrow b_2 \rightarrow \ldots \rightarrow b_l \rightarrow b_1$
in the dependence graph $\Gamma(\p)$
 does not contain nonlinear dependencies.
\end{corollary}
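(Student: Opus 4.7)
The plan is to argue by contradiction. Suppose some loop $b_1 \to b_2 \to \cdots \to b_l \to b_1$ in $\Gamma(\p)$ contains a nonlinear dependency, say on the edge $b_i \to b_{i+1}$. The first step is to apply Lemma~\ref{lem::edge_growth} to every edge of the loop: this produces polynomials $a_j(n)$ and positive integers $d_j$ such that, for $n$ large enough, one has the chain
\[
y_{b_1, n} \geq a_1(n)\,y_{b_2,n-d_1} \geq a_1(n)\,a_2(n-d_1)\,y_{b_3,n-d_1-d_2} \geq \cdots,
\]
which, after traversing the full loop, telescopes to
\[
y_{b_1, n} \;\geq\; Q(n) \cdot y_{b_1, n - D}, \qquad D := d_1 + \cdots + d_l,
\]
with $Q(n) = \prod_{j=1}^l a_j(n - d_1 - \cdots - d_{j-1})$. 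Since the edge $b_i \to b_{i+1}$ is nonlinear, Lemma~\ref{lem::edge_growth} lets us take $a_i$ non-constant; consequently $Q$ has some positive degree $e \geq 1$ and positive leading coefficient.

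The second step is to iterate this one-step inequality $k$ times and evaluate at $n = n_0 + kD$, where $n_0$ is a fixed integer with $y_{b_1, n_0} > 0$. Such an $n_0$ exists because the vertex $b_1$ arises as the stump of at least one nonzero monomial of $\p$, so $y_{b_1}$ is a nontrivial formal series. This yields
\[
y_{b_1, n_0 + kD} \;\geq\; \Bigl(\prod_{j=0}^{k-1} Q(n_0 + (k-j)D)\Bigr)\, y_{b_1, n_0}.
\]
Using $Q(m) \geq c\, m^e$ for $m$ sufficiently large, the product on the right is bounded below by $C^k (k!)^e$ for positive constants $C, c$, so by Stirling $\log y_{b_1, n} \geq (e/D)\, n \log n - O(n)$ along the arithmetic progression $n = n_0 + kD$.

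The final step is to combine this with the inclusion $\widetilde{\m_{b_1}} \subset$ (monomial basis of $\p$), giving $y_{b_1, n} \leq \dim \p(n)$, and with the subfactorial hypothesis on $\dim\p(n)$, which forces $\log \dim \p(n) \leq (1/B)\, n \log n + O(n)$ for every $B > 0$. Taking any $B > D/e$ contradicts the lower bound established in the previous step, and the argument is complete. The main obstacle, in my view, is purely bookkeeping: tracking the shifts $d_1 + \cdots + d_{j-1}$ around the loop and verifying that a single nonlinear factor $a_i$ suffices to make the telescoped polynomial $Q$ grow at least linearly in $n$, regardless of whether the other $a_j$ ($j \neq i$) are themselves constant.
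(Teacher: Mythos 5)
Your argument is correct and is essentially the paper's own proof: telescope Lemma~\ref{lem::edge_growth} around the loop, observe that one nonlinear edge makes the product $Q(n)$ of the polynomials $a_j$ non-constant, and iterate along an arithmetic progression starting from an index where $y_{b_1,n_0}>0$ to force factorial-type growth contradicting subfactoriality. The only cosmetic difference is that the paper phrases the iteration as $m=\lfloor n/(2d)\rfloor$ backward steps rather than $k$ forward steps, which changes nothing of substance.
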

\begin{proof}
Consider a collection of pairs $[(d_1,a_1(n)),\ldots,(d_l,a_l(n))]$
satisfying the following inequalities:
$$
y_{b_1,n} \geq a_1(n) y_{b_2,n-d_1} \geq a_1(n) a_2(n-d_1) y_{b_3,n-d} \geq \ldots
\geq \left( \prod_{j=1}^{l} a_j(n-\sum_{i=1}^{j-1}d_i) \right)
y_{b_1,n-(d_1+\ldots+d_n)}.
$$
Suppose that there is a nonlinear dependence in the given loop.
Then the degree of the polynomial $a(n):= \left( \prod_{j=1}^{l}
a_j(n-\sum_{i=1}^{j-1}d_i) \right)$ is positive, so that $a(n) > C
n^k$ for some $k\ge 1, C>0$ and for all  $n$ sufficiently large .
Therefore, $y_{b_1,n} \geq C n^k y_{b_1,n-d}
 \geq C^{m} n^k
(n-d)^k \ldots (n-(m+1)d)^k y_{b_1,n-md}$ for each $m\le n/d$.
In particular, if the series $y_{b_1}$ is different from zero there exists an infinite 
arithmetic progression of indices $\{n_0,n_0+d,n_0+2d,\ldots\}$ 
such that the corresponding sequence of coefficients $\{y_{b_1,n_0},y_{b_1,n+0+d},\ldots\}$ 
does not contain zeros.
 Put $m
=\lfloor\frac{n}{2d}\rfloor$. Then
$$
y_{b_1,n} \geq C^{\lfloor \frac{n}{2d}\rfloor}
\left(\frac{n}{2}\right)^{\lfloor \frac{n}{2d}\rfloor}
y_{b_1,n-{\lfloor \frac{n}{2d}\rfloor}d} \geq \left(\frac{n}{A}
\right)^{\frac{n}{B}}, \text{ if } y_{b_1,n-{\lfloor \frac{n}{2d}\rfloor}d}\neq 0
$$
where $A=2/C$ and $B={2d}+1$. Therefore, the only chance for $y_{b_1}$
to have a subfactorial growth is to have all polynomials $a_i(n)$
equal to positive constants. In particular,  all dependencies
should be linear.
\end{proof}

\begin{proof}[Proof of Corollary~\ref{cor::sym_constant_growth}]
The proof is by induction on the number of possible stumps or by
the number of vertices in the dependence graph $\Gamma(\p)$. The
induction base easily follows from
Corollary~\ref{cor::wheel_growth}
because any arrow in a graph with
one vertex is a loop.
 Therefore, the recursive equation on the
coefficients of generating series is linear.

\emph{Induction step.} Let $V$ be a maximal proper subset of
vertices in the graph $\Gamma(\p)$ such that there is no outgoing
arrows to the remaining set of vertices $\bar{V}$. 
Let $G$, $\bar{G}$ be a maximal subgraph spanned by $V$, 
respectively $\bar{V}$ and all arrows between them.
(We omit arrows coming from $\bar{V}$ to $V$.) Notice that the
subgraph $G$ may be empty and, on the contrary, $\bar{G}$ 
contains at least one vertex. Moreover, the subgraph $G$ is
a dependence graph for the subset of stumps numbered by vertices
in $V$. From the induction hypothesis it follows that for each
$b\in V$ the corresponding usual generating series $G_{b}=
\sum_{n\geq 0} y_{b,n} z^n$ is rational.
Lemma~\ref{lem::exp_to_rational} given below shows that any
summand of the form $C(y_{b_1},\ldots,y_{b_l})$ where all $b_i$
belong to $V$ is an exponential generating series of a
sequence such that the corresponding ordinary generating function
is  rational. On the other hand, the maximality property of $V$
implies that each two vertices from $\bar{V}$ are connected
by a directed path. Therefore, any arrow $b\rightarrow b' \subset
\bar{G}$ belongs to a loop where the remaining part of the
loop is a directed path from $b'$ to $b$.
Corollary~\ref{cor::wheel_growth} implies that all dependencies in
this wheel are linear. Hence, the system for the usual
generating series $G_{b}$ with $b\in \bar {V}$ reduces to a system
of linear equations with rational coefficients. As we have seen at
the end of Section~\ref{sec::nonsum_const_growth}, this implies
that all ordinary generating series are rational.
\end{proof}

The following Lemma is well known. We include its simple proof for
completeness.

\begin{lemma}
\label{lem:rat=dif-lin}
Let $G_a(z) = \sum_{i\ge 1} a_n z^n $ and
$E_a(z) = \sum_{i\ge 1} \frac{a_n}{n!} z^n $ be the ordinary and
exponential generating functions of the same sequence of complex
numbers $\{a_n\}_{n\ge 1}$. Then the function $G_a(z)$ is rational
if and only if the function $E_a(z)$ satisfies a non-trivial linear
differential equation with scalar coefficients.
\end{lemma}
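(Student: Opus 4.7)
The plan is to use the standard characterizations of each side: $G_a(z)$ is rational exactly when the sequence $\{a_n\}$ eventually satisfies a linear recurrence with constant coefficients, and $E_a(z)$ satisfies a linear ODE with constant scalar coefficients exactly when $E_a(z)$ is a finite sum of quasi-polynomials $p(z) e^{\alpha z}$. So the lemma reduces to matching these two normal forms coefficient-by-coefficient.

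For the $(\Rightarrow)$ direction, suppose $G_a(z)$ is rational. By partial fractions over $\mathbb{C}$, we may write $G_a(z) = P(z) + \sum_{k} \frac{c_{k}}{(1 - \alpha_{k} z)^{m_{k}}}$ with $P(z)$ a polynomial. Expanding $(1-\alpha z)^{-m}$ shows that, for all $n$ larger than some $n_{0}$, the coefficient $a_{n}$ has the form $\sum_{k} R_{k}(n)\, \alpha_{k}^{n}$ for polynomials $R_{k}$. Using the identity $(z\partial_{z})^{j} e^{\alpha z} = q_{j}(\alpha z) e^{\alpha z}$ with $q_{j}$ an explicit polynomial, each monomial series $\sum_{n\ge 0} \frac{n^{j}\alpha^{n}}{n!} z^{n}$ equals a polynomial in $z$ times $e^{\alpha z}$. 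Hence $E_{a}(z)$ is a polynomial (from the initial terms $n\le n_0$) plus a finite sum of quasi-polynomials $p_{k}(z) e^{\alpha_{k} z}$, which is annihilated by the product of the differential operators $(\partial_{z} - \alpha_{k})^{\deg p_{k}+1}$ and enough powers of $\partial_{z}$ to kill the polynomial part. Thus $E_{a}$ satisfies a linear ODE with constant coefficients.

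For the $(\Leftarrow)$ direction, suppose $L E_{a} = 0$ for some nonzero $L = \sum_{i=0}^{N} c_{i} \partial_{z}^{i}$ with $c_{i}\in\mathbb{C}$. Standard ODE theory tells us that $E_{a}(z) = \sum_{k} p_{k}(z) e^{\alpha_{k} z}$, where $\alpha_{k}$ are (complex) roots of the characteristic polynomial. Expanding each term via $p_{k}(z) e^{\alpha_{k} z} = \sum_{n\ge 0} \frac{b_{n}^{(k)}}{n!} z^{n}$ and inverting the identity above, we obtain $b_{n}^{(k)} = S_{k}(n)\,\alpha_{k}^{n}$ for an explicit polynomial $S_{k}$. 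Summing, $a_{n} = \sum_{k} S_{k}(n)\,\alpha_{k}^{n}$, so the sequence $\{a_{n}\}$ satisfies a linear recurrence with constant coefficients (given, e.g., by the product $\prod_{k}(T-\alpha_{k})^{\deg S_{k}+1}$), which is equivalent to saying that $G_{a}(z)$ is a rational function. The main point to be careful about is keeping track of multiplicities of the $\alpha_k$ to pair quasi-polynomials with the corresponding repeated roots of the characteristic polynomial; once this bookkeeping is done, both directions are immediate from the normal forms.
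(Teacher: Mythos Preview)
Your argument is correct, but it takes a genuinely different route from the paper's proof. The paper observes directly that the $j$-th derivative $E_a^{(j)}(z)$ has $\frac{(n+j)!}{n!}\,\frac{a_{n+j}}{(n+j)!}=\frac{a_{n+j}}{n!}$ as the coefficient of $z^n$; hence the differential relation $E_a^{(k)}=\sum_{j=0}^{k-1} c_j E_a^{(j)}$ is, coefficient by coefficient, \emph{identical} to the linear recurrence $a_{n+k}=\sum_{j=0}^{k-1} c_j a_{n+j}$, which is the standard characterization of rationality of $G_a$. No partial fractions, no explicit exponential-polynomial solutions, no case analysis on multiplicities are needed. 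Your approach instead passes through the closed forms on both sides (partial fraction decomposition of $G_a$, quasi-polynomial basis of solutions of the constant-coefficient ODE for $E_a$) and matches them up. This is perfectly valid and has the advantage of exhibiting explicitly what the sequences look like, but it is heavier machinery for the task: the bookkeeping of multiplicities and the special handling of $\alpha_k=0$ that you flag at the end are precisely the complications that the paper's one-line coefficient comparison sidesteps.
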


\begin{proof}
The condition ``$G_a(z)$ is rational'' means that a recurrent
equation
$$
 a_{n+k} = \sum_{j=0}^{k-1} c_j a_{n+j}
$$
holds for all $n\ge 1$. It is equivalent to the recurrent relation
$$
 \frac{(n+k)!}{n!} b_{n+k} = \sum_{j=0}^{k-1} c_j \frac{(n+j)!}{n!} b_{n+j}
$$
for the numbers $b_n = a_n /n!$. This is equivalent to the
differential relation
$$
E_a(z)^{(k)} = \sum_{j=0}^{k-1} c_j E_a(z)^{(j)}
$$
for the exponential generating function $E_a(z)$.
\end{proof}

Let $\mathcal{E}$ be the set of all exponential generating series
such that the corresponding ordinary generating series are
rational functions. In other words, $\mathcal{E}$ is the set of
all exponential generating functions which are  solutions  of
non-trivial linear differential equations with scalar
coefficients. Now, the next Lemma is obvious.

\begin{lemma}
\label{lem::exp_to_rational} The set $\mathcal{E}$ is closed under
multiplication, differentiation and integration. In particular if
$E_f,E_g\in \mathcal{E}$ then $C(E_f,E_g) = \int_{0}^{z} E_f'(w)
E_g(w) dw$ also belongs to $\mathcal{E}$, that is,  the
corresponding ordinary generating series is also rational.
\end{lemma}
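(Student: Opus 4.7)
The plan is to use the characterization of $\mathcal{E}$ provided by the preceding Lemma~\ref{lem:rat=dif-lin}: a formal power series belongs to $\mathcal{E}$ if and only if it satisfies a nontrivial linear ODE $P(D)E=0$ with constant (scalar) coefficients, where $D=d/dz$ and $P\in\QQ[D]$. Closure under the three operations can then be read off from the algebra of such operators (equivalently, from the fact that $\mathcal{E}$ consists exactly of the exponential-polynomial formal power series, namely finite $\QQ$-linear combinations of $z^{k}e^{\lambda z}$, obtained by factoring the characteristic polynomial).

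Closure under differentiation and integration is essentially formal. If $P(D)E=0$, then $P(D)(DE)=D\bigl(P(D)E\bigr)=0$, so $E'\in\mathcal{E}$. For the integral $F(z)=\int_0^z E(w)\,dw$, one has $DF=E$ and therefore $\bigl(P(D)\cdot D\bigr)F=0$; since $P(D)\cdot D$ is again a nontrivial polynomial in $D$, we get $F\in\mathcal{E}$. Closure under multiplication is the step that actually requires the exponential-polynomial picture: writing $E_f=\sum_i p_i(z)e^{\lambda_i z}$ and $E_g=\sum_j q_j(z)e^{\mu_j z}$ with $p_i,q_j\in\QQ[z]$, one computes
\[
E_f\cdot E_g \;=\; \sum_{i,j} p_i(z)q_j(z)\,e^{(\lambda_i+\mu_j)z},
\]
which is again exponential-polynomial and hence in $\mathcal{E}$.

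Given these three closure properties, the statement about $C(E_f,E_g)$ follows immediately from its defining formula $C(E_f,E_g)(z)=\int_0^z E_f'(w)E_g(w)\,dw$: differentiation gives $E_f'\in\mathcal{E}$, multiplication gives $E_f'\cdot E_g\in\mathcal{E}$, and integration gives $C(E_f,E_g)\in\mathcal{E}$. The only step that carries any real content is closure under multiplication, and the mild obstacle there is justifying the passage to the exponential-polynomial normal form; this is standard, but it is the one place where one cannot simply manipulate the differential operator $P(D)$ annihilating a single series.
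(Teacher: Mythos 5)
Your proposal is correct. For comparison: the paper offers no argument at all here --- it introduces $\mathcal{E}$, recalls via Lemma~\ref{lem:rat=dif-lin} that membership in $\mathcal{E}$ is equivalent to satisfying a non-trivial linear ODE with scalar coefficients, and then declares the present lemma ``obvious.'' Your write-up is a faithful filling-in of exactly the argument the authors have in mind: differentiation and integration are handled by multiplying the annihilating operator $P(D)$ by $D$ on the appropriate side, and the composite claim about $C(E_f,E_g)$ is just the three closures chained together. Two small remarks on the multiplication step, which you rightly identify as the only step with content. First, if you insist on the exponential-polynomial normal form, the exponents $\lambda_i$ and the coefficients of the $p_i$ a priori live in $\overline{\QQ}$, so to land back in $\mathcal{E}$ (defined over $\QQ$) you should add one line: a power series with rational coefficients annihilated by some non-zero constant-coefficient operator over $\mathbb{C}$ is annihilated by one over $\QQ$ (decompose the operator along a $\QQ$-basis of the $\mathbb{C}$-span of its coefficients). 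Second, this detour is avoidable: since the derivatives of $E_f$ span a $\QQ$-space of dimension at most $k$ and those of $E_g$ one of dimension at most $l$, Leibniz's rule places all derivatives of $E_fE_g$ in the $kl$-dimensional span of the products $E_f^{(i)}E_g^{(j)}$, so $E_fE_g$ satisfies a constant-coefficient ODE of order at most $kl$ over $\QQ$ directly; equivalently, on the level of coefficients, the binomial convolution of two exponential-polynomial sequences is exponential-polynomial. Either variant is fine, and your proof as written is complete modulo the one-line rationality remark.
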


\subsection{Examples for symmetric operads}
\label{sec::examples:sym}
Unfortunately, the list of known operads is not so big and most of examples where a Gr\"obner basis has been computed
are quadratic operads. (See~\cite{zin} for an incompleted list of quadratic operads.) 
Most of them admit an ordering such that the corresponding Gr\"obner basis is shuffle regular.
We present below a couple of examples which illustrate the theory and possible orderings.

\subsubsection{Examples of shuffle regular PBW operads}

\begin{example}
\label{ex:alia}
Consider the class of so-called alia algebras introduced by Dzhumadil'daev~\cite{dzh},
that is, the algebras with one binary operation (multiplication) and satisfying the identity
$$
\{[x_1, x_2], x_3\} + \{[x_2, x_3], x_1\} + \{[x_3, x_1], x_2\} = 0,
$$
where $[x_1,x_2] = x_1x_2-x_2x_1$ and $\{x_1,x_2\} = x_1x_2+x_2x_1$ (these algebras are also referred in~\cite{dzh}
as 1-alia algebras).

Let us choose the generators $\alpha: (x_1,x_2)\mapsto [x_1,x_2]$ and $\beta: (x_1,x_2)\mapsto \{x_1,x_2\}$
for the corresponding symmetric/shuffle operad $\alia$.
Then the leading term of the relation corresponding to the above identity (with respect to the path-lex order with $\beta>\alpha$)
 is $\beta(x_1,\alpha(x_2, x_3))$, that is, the only shuffle monomial corresponding to the shuffle skeleton $\beta(-,\alpha(-, -))$.

Obviously,  there is no overlapping of the leading term
$\beta(x_1,\alpha(x_2, x_3))$ of the relation with itself, hence
the relation is the unique element of the Gr\"obner basis of the
relations of $\alia$. Then we have the following three elements of
the set of stumps $B$ (in terms of the proof of
Theorem~\ref{th:sym_mon_difurs})):
$$
B_{0} = \Id, B_1 = \alpha, B_2 = \beta.
$$
We get the relations
$$
\left\{
\begin{array}{l}
y_0 = z,\\
y_1 = C(E_{\alia},E_{\alia}),\\
y_2 = C(E_{\alia}, y_0+y_2),\\
E_{\alia} = y_0+y_1+y_2
\end{array}
\right.
$$
(here we use the linearity of the operation $C$ to shorten the summation in the right hand sides).
Using the linearity of $C$ and Proposition~\ref{popr::C_1st_prop}, we get
the system
$$
\left\{
\begin{array}{l}
y_0 = z,\\
y_1 = E_{\alia}^2/2,\\
y_2 = E_{\alia}^2/2 - C(E_{\alia},y_1),\\
E_{\alia} = y_0+y_1+y_2,
\end{array}
\right.
$$
which leads to the equation
$$
    C(y, y^2/2 ) =z-y+y^2
$$
 for $y = E_{\alia}(z)$.
After differentiation, we get
the equation
$$
    y' y^2/2 = 1-y'+2yy'.
$$
Using the initial condition $y(0) = 0$, we get
the algebraic equation
$$
  y^3/6-y^2+y = z
$$
for $y$. In particular, the  function $y = E_{\alia}(z)$ is algebraic.

Since the operad $\alia$ has a quadratic Gr\"obner basis of relations,
it follows from~\cite{hof} (see also \cite[Cor.~3]{dk}) that the operad $\alia$ is Koszul.
By the Ginzburg--Kapranov relation, its exponential generating series $y$ satisfies the relation
$$
   f(-y) = -z,
$$
where $f(z)$ is the exponential generating series of the quadratic dual operad $\alia^!$.  It follows from the
equation above that
$$E_{\alia^!}(z) = z+ z^2+z^3/6.
$$
This polynomial coincides with the result of a direct calculation
given in the concluding remark of~\cite{dzh}.

Recall now two other classes of algebras from~\cite{dzh}. A
nonassociative algebra is called {\em left}  (respectively, {\em
right}) alia
 if it satisfies the identity
$$
l(x_1,x_2, x_3) = [x_1, x_2]x_3 + [x_2, x_3]x_1 + [x_3, x_1]x_2 = 0,
$$
or, respectively, the identity
$$
r(x_1,x_2, x_3) = x_1[x_2, x_3] + x_2[x_3, x_1] + x_3[x_1, x_2] = 0.
$$
Consider the left hand side
$
r(x_1,x_2, x_3)
$
of the last identity.
Using the above generators $\alpha$ and $\beta$ with the substitution $2x_1x_2 = \alpha(x_1,x_2)+\beta(x_1,x_2)$,
we see that the leading monomial of $r(x_1,x_2, x_3)$
with respect to the same path-lex order with $\beta>\alpha$
is the same monomial $\beta(x_1,\alpha(x_2, x_3))$ as for alia algebras.
By the same reasons as  above, we see that the operad of right alia algebras is PBW and Koszul
with the same generating series as the operad $\alia$.
By the right-left symmetry, the same is true for left alia algebras.
Thus, we get
\begin{prop}
The three operads for alia algebras, left alia algebras and right alia algebras
are Koszul with the same  exponential generating series $ y = E_{\p}(z)$ satisfying the equation
$$
y^3/6-y^2+y = z,
$$
that is,
$$
y(z) = z+z^2+\frac{11}{6}z^3+\frac{25}{6}z^4+\frac{127}{12}z^5+
\frac{259}{9}z^6+\frac{1475}{18}z^7+\frac{17369}{72}z^8+\frac{943855}{1296}z^9+\frac{2906189}{1296}z^{10}+O(z^{11}).
$$
Each of their three Koszul dual operads is finite-dimensional and has exponential
generating series
$$
E_{\p^!}(z) = z+z^2+z^3/6.
$$
\end{prop}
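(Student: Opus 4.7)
My plan is to verify the three claims of the proposition (Koszulness, the algebraic equation for $y$, and the generating series of the Koszul dual) in parallel for all three operads, by reducing the last two cases to the already-treated case of $\alia$ via an identification of leading terms under the same path-lex order.

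For the alia operad $\alia$ itself, the proof is essentially already written out in the discussion preceding the proposition: the single defining identity has leading term $\beta(x_1,\alpha(x_2,x_3))$ under the path-lex order with $\beta>\alpha$, there are no overlaps of this monomial with itself, so it forms a one-element Gr\"obner basis; the stumps are $\mathrm{Id},\alpha,\beta$, and the system produced by Theorem~\ref{th:sym_mon_difurs} collapses (using Proposition~\ref{popr::C_1st_prop}) to the single algebraic equation $y^3/6-y^2+y=z$. Koszulness is immediate from~\cite{hof}/\cite[Cor.~3]{dk} since the basis is quadratic. Then Ginzburg--Kapranov gives $E_{\alia^!}(z)=z+z^2+z^3/6$, matching the direct calculation of~\cite{dzh}.

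For the right alia operad, the plan is to expand the identity $r(x_1,x_2,x_3)=0$ using the substitution $x_ix_j=\tfrac12(\alpha(x_i,x_j)+\beta(x_i,x_j))$ and then identify the leading shuffle monomial under the same order with $\beta>\alpha$. Each term $x_i[x_j,x_k]$ produces two shuffle monomials of the form $\beta(-,\alpha(-,-))$ and $\alpha(-,\alpha(-,-))$; under the path-lex order, the six $\beta$-rooted summands dominate, and one checks that after summing they do not cancel and the leading monomial is again $\beta(x_1,\alpha(x_2,x_3))$. This is the single step that actually needs verification; once it is done, the Gr\"obner basis consists of this one quadratic element exactly as for $\alia$, so the stump algorithm produces the identical system of equations and therefore the identical generating series. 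Koszulness and the formula for the dual then follow verbatim.

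For left alia, the plan is to observe that the opposite operad construction (replacing every generating operation $\mu(x_1,x_2)$ by $\mu^{\mathrm{op}}(x_1,x_2):=\mu(x_2,x_1)$) sends the left alia identity $l(x_1,x_2,x_3)=0$ to the right alia identity $r(x_1,x_2,x_3)=0$, since $[x_1,x_2]$ is skew-symmetric. This gives an isomorphism between the two operads, so all invariants, including the generating series and the Koszul dual, coincide. The main obstacle I anticipate is the bookkeeping in the expansion for the right alia case: one must carefully track the six terms arising from $r(x_1,x_2,x_3)$ after the substitution, verify non-cancellation of the leading $\beta(x_1,\alpha(x_2,x_3))$-coefficient, and confirm that no ``ghost'' relation of smaller leading term enters the Gr\"obner basis when one runs Buchberger's algorithm (which here is trivial since the single quadratic has no self-overlap). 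Once that bookkeeping step is settled, the three cases fuse into one and the proposition follows.
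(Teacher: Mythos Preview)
Your proposal is correct and follows essentially the same route as the paper: the paper treats $\alia$ exactly as you describe, then for right alia performs the same substitution $2x_1x_2=\alpha(x_1,x_2)+\beta(x_1,x_2)$ and observes the leading monomial is again $\beta(x_1,\alpha(x_2,x_3))$, and for left alia invokes ``right-left symmetry,'' which is precisely your opposite-operad argument. The only difference is that you spell out the opposite-operad isomorphism explicitly where the paper leaves it as a one-phrase remark.
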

\end{example}

\subsubsection{Examples of symmetric regular operads}

In the next two examples, we consider the operad of upper triangular matrices over non-associative commutative  rings.

\begin{example}
\label{ex::2x2-upper-matrices}
Let $R$ be a commutative non-associative ring (or a $\kk$-algebra).
Then it is easy to
see that  the algebra $UT_2(R)$ of upper triangular $2\times2$-matrices
 over $R$ satisfies the identity
$$
   [x_1,x_2][x_3,x_4] = 0,
$$
where  $[a,b] = ab-ba$.

Let us denote by  ${{NU}}_2$ the operad generated by the operation
of non-symmetric multiplication $\mu: (x_1,x_2)\mapsto x_1x_2$
(i.~e., the arity two component ${{NU}}_2$ is spanned by $\mu$ and
$\mu': (x_1,x_2)\mapsto x_2x_1$) subject to this identity.
Consider the corresponding shuffle operad ${\mathcal {NU}}_2$
generated by two binary generators, namely,
 the operations $\mu$ and
$\alpha: (x_1,x_2) \mapsto [x_1,x_2]$. Then the above identity is
equivalent to the pair of shuffle regular monomial identities
$$
f_1 = \mu(\alpha(\textrm{-},\textrm{-}), \alpha(\textrm{-},\textrm{-}) )=0
\quad \text{ and } \quad
f_2 = \alpha(\alpha(\textrm{-},\textrm{-}), \alpha(\textrm{-},\textrm{-}) )=0.
$$
Therefore, the ideal of relations of the shuffle operad ${\mathcal {NU}}_2$
is generated by the following six shuffle monomials obtained from $f_1$ and
$f_2$ by substituting all shuffle compositions of four variables
(which we denote for simplicity by 1,2,3,4):
$$
\begin{array}{lll}
m_1 = \mu(\alpha(1,2), \alpha(3,4) ),
 & m_2 = \mu(\alpha(1,3),
\alpha(2,4) ),
  & m_3 = \mu(\alpha(1,4), \alpha(2,3) ),\\
m_4 = \alpha(\alpha(1,2), \alpha(3,4) ),
 & m_5 = \alpha(\alpha(1,3),
\alpha(2,4) ),
  & m_6 = \alpha(\alpha(1,4), \alpha(2,3) ).\\
\end{array}
$$
Let us describe the set $B$ of all stumps of all nonzero monomials in
$\NU_2$. Since the relations have their leaves at level 2, $B$
includes all monomials of level at most one, that is, the
monomials
$$
B_0 = \Id, B_1 = \mu(\textrm{-},\textrm{-}), B_2 =\alpha(\textrm{-},\textrm{-}).
$$
For the corresponding generating series $y_i = y_i(z)$ with
$i=0,1,2$ we have
$$
\left\{
 \begin{array}{l} y_0 = z, \\
  y_1 = C(y_0,y_0) + C(y_1,z)+C(z,y_1) +C(y_2,z)+C(z,y_2) +
  C(y_1,y_1)+   C(y_1,y_2)+  C(y_2,y_1), \\
  y_2 = C(y_0,y_0) + C(y_1,z)+C(z,y_1) +C(y_2,z)+C(z,y_2) +
  C(y_1,y_1)+   C(y_1,y_2)+  C(y_2,y_1).
  \end{array}
 \right.
$$
We see that $y_1(z) = y_2(z)$  and $E_{\NU_2}(z) = y(z) =
y_0(z)+y_1(z)+y_2(z) = z+2y_1(z)$. The second equation of the
above system gives, after differentiation, the ordinary
differential equation (ODE)
$$
y_1' = z  + 2zy_1+2zy_1' + 3y_1y_1'
$$
with the initial condition $y_1(0)=0$, which is equivalent to the
ODE
$$
(y'(z)-1)(2-z-3y(z)) = 4y(z)
$$
on $y(z)$, again with the initial condition $y(0) = 0$. It follows
that
$$
\begin{array}{l}
E_{{\mathcal {NU}}_2}(z) = y(z) = \frac{1}{3}\left( 2-z-2 \sqrt {1-4\,z+{z}^{2}} \right) \\
= z+{z}^{2}+2\,{z}^{3}+{\frac {19}{4}}{z}^{4}+{\frac
{25}{2}}{z}^{5}+{ \frac {281}{8}}{z}^{6}+{\frac
{413}{4}}{z}^{7}+{\frac {20071}{64}}{z}^ {8}+{\frac
{31249}{32}}{z}^{9}+{\frac {396887}{128}}{z}^{10} + o(z^{10})
 \end{array}
 $$
\end{example}

Let us generalize Example~\ref{ex::2x2-upper-matrices} to the case
of matrices of order $n$. The following description of identities
easily follows from the fact that the diagonal elements of the
commutator of two upper triangular matrices are zero.

\begin{lemma}
\label{lem:upper-triangle-define}
Let  $U_n(R)$ be the algebra of upper triangular matrices of order $n$ over a (non-associative)
commutative ring $R$.
Then for each $n$-ary multiple composition $f$ of multiplications of matrices,
the  identity of $2n$ arguments
$$
   f([x_1, x_2], \dots, [x_{n-1}, x_n]) = 0,
$$
holds in $U_n(R)$, where, as usual,  $[a,b]$ stands for $ab-ba$.
\end{lemma}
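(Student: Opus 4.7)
The plan is to reduce the identity to a standard nilpotency phenomenon for strictly upper triangular matrices, adapted to the non-associative setting. The first step is to check that each commutator $[A,B]$ of matrices in $U_n(R)$ is strictly upper triangular (zero on and below the main diagonal). The below-diagonal entries vanish by upper triangularity of $A$ and $B$ alone; for the $(i,i)$-entry one computes $[A,B]_{ii} = \sum_k (A_{ik}B_{ki} - B_{ik}A_{ki})$, in which every $k\neq i$ summand is zero because one factor lies strictly above and the other strictly below the diagonal, while the $k=i$ contribution $A_{ii}B_{ii} - B_{ii}A_{ii}$ vanishes by the commutativity (but not associativity) of $R$.

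The second step is to prove that any $n$-ary composition $f(M_1,\dots,M_n)$ of matrix multiplications applied to strictly upper triangular matrices $M_1,\dots,M_n\in M_n(R)$ vanishes, regardless of how $f$ is bracketed. I would argue by induction on the depth of the tree $f$ to establish an entrywise formula of the form
\[
f(M_1,\dots,M_n)_{ij} \;=\; \sum_{k_0=i,\,k_n=j}\bigl[(M_1)_{k_0 k_1}(M_2)_{k_1 k_2}\cdots(M_n)_{k_{n-1}k_n}\bigr]_f,
\]
where $[\,\cdots\,]_f$ denotes the product in $R$ bracketed according to the tree $f$ and the summation ranges over all choices of intermediate indices $k_1,\dots,k_{n-1}$. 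Since each $M_s$ is strictly upper triangular, every scalar $(M_s)_{k_{s-1}k_s}$ vanishes unless $k_{s-1}<k_s$, so any nonzero summand requires $j=k_n\geq k_0+n=i+n$. But $i,j\in\{1,\dots,n\}$ forces $j-i\leq n-1$, a contradiction; hence the whole sum vanishes.

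The main obstacle is the failure of associativity: in the classical associative case the conclusion is just the identity $N^n=0$ for a strictly upper triangular $N\in M_n(R)$, but here different bracketings of $f$ produce formally different elements of $R$, so one must verify that the support constraint $k_{s-1}<k_s$ propagates through every binary composition independently of how subsequent products in $R$ are associated. This is what the entrywise formula above makes precise; once it is in place, the vanishing is a purely combinatorial consequence of the fact that a strictly increasing chain in $\{1,\dots,n\}$ cannot have length $n+1$. Combining the two steps yields the identity $f([x_1,x_2],\dots,[x_{2n-1},x_{2n}])=0$ in $U_n(R)$.
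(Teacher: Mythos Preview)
Your proposal is correct and matches the paper's own justification, which is the single sentence that the lemma ``easily follows from the fact that the diagonal elements of the commutator of two upper triangular matrices are zero.'' Your Step~1 is exactly that observation, and your Step~2 is a valid fleshing-out of the implicit nilpotency argument; the entrywise formula you state does hold by a straightforward induction on the number of leaves of $f$ (which is a slightly cleaner induction parameter than the depth), using bilinearity of the product in $R$ to distribute the matrix-product sums.
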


For example, for $n=2$ we get the single identity $[x_1,x_2][x_3,
x_4]$ as above. For $n=3$  we have the identities $f_i([x_1, x_2],
[x_3, x_4], [x_{5}, x_6])$ with $i=1,2$, where $f_1(a_1,a_2,a_3) =
(a_1a_2)a_3$ and  $f_2(a_1,a_2,a_3) = a_1(a_2a_3)$ (all other
identities are obtained from these two by permutations of
variables).

Consider the operad ${\mathcal {NU}}_n$ of upper triangular
matrices generated by the non-symmetric operation of
multiplication $\mu: (x_1,x_2)\mapsto x_1x_2$ subject to all these
identities. Consider natural generators of the corresponding
shuffle operad $\alpha : (x_1,x_2)\mapsto [x_1,x_2]$ and $\beta:
(x_1,x_2) \mapsto x_1x_2+x_2x_1$. We immediately see that the set
of shuffle relations of this operad is spanned by monomials and is
symmetric regular.

\begin{cor}
For all $n\ge 2$ the exponential generating series $ E_{{\mathcal {NU}}_n}$ is algebraic.
\end{cor}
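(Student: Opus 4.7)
The plan is to apply Corollary~\ref{cor::bosquet} directly, by exhibiting a finite symmetric regular Gr\"obner basis for the ideal of relations of $\NU_n$ in the shuffle operad generated by $\alpha$ and $\beta$.

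First, I would translate the identities of Lemma~\ref{lem:upper-triangle-define} into $\f(\alpha,\beta)$. The arity-$2$ space spanned by the (non-shuffle) operations $\mu,\mu'$ with $\mu'(x,y):=\mu(y,x)$ equals that spanned by the shuffle generators $\alpha,\beta$, via $\mu=\tfrac12(\alpha+\beta)$ and $\mu'=\tfrac12(\beta-\alpha)$. The defining identities assert that $f([x_1,x_2],\ldots,[x_{2n-1},x_{2n}])=0$ for every $n$-ary composition $f$ of $\mu$. Because this family is invariant under arbitrary permutations of the $n$ commutator slots $[x_{2i-1},x_{2i}]$, precomposing $f$ with a slot-permutation yields a composition of $\mu$ and $\mu'$ in the slot-arguments; running over all $f$ and all such permutations, we obtain every $n$-leaf tree in $\{\alpha,\beta\}$ substituted with $n$ commutators at its leaves. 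Hence every shuffle monomial of the form $T(\alpha,\ldots,\alpha)$, where $T$ is an $n$-leaf tree with internal vertices labelled by $\alpha$ or $\beta$, lies individually in the relation ideal; in particular the ideal is monomial.

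Next, I would verify symmetric regularity. For each tree skeleton of the form $T(\alpha,\ldots,\alpha)$ above, every external labeling of its $2n$ leaves also yields a relation, because the identity is preserved under all permutations of $x_1,\ldots,x_{2n}$ — the swap $x_{2i-1}\leftrightarrow x_{2i}$ only contributes a sign through the antisymmetry of $\alpha$, which is absorbed by the ideal. Thus for every generating relation monomial, all shuffle monomials sharing its tree skeleton belong to the relation set, and this is exactly Definition~\ref{def::sym_regular} of a symmetric regular subset.

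Finally, since the ideal is monomial, the spanning set of monomials just described is automatically a Gr\"obner basis with respect to any admissible order (all $s$-polynomials between monomials vanish), and the set is finite because all its elements have fixed arity $2n$. Corollary~\ref{cor::bosquet} then applies and gives that $E_{\NU_n}(z)$ is algebraic. The main subtlety I anticipate is in the first step: the passage from compositions of $\mu$ alone to all $\{\alpha,\beta\}$-trees relies crucially on the $S_n$-symmetry of the generating family of identities in the commutator slots; without this symmetry one would only obtain a partial (non-monomial) description of the ideal.
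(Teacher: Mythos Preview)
Your proposal is correct and follows essentially the same approach as the paper: change generators to $\alpha,\beta$, observe that the defining identities become a finite set of monomial relations forming a symmetric regular set, and invoke Corollary~\ref{cor::bosquet}. The paper simply asserts ``we immediately see that the set of shuffle relations of this operad is spanned by monomials and is symmetric regular'' without further detail, whereas you spell out why the span of the $\mu$-compositions (together with slot permutations) yields every $\{\alpha,\beta\}$-tree monomial with $\alpha$-leaves---your identification of the $S_n$-symmetry in the commutator slots as the crucial ingredient is exactly the point that makes this work.
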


\begin{exam}
\label{exam::NU3}
Let us find a relation for the
exponential generating series for the operad
$\p={\mathcal {NU}}_3$. To do this, we use the appropriate version of the method used in Subsection~\ref{sec::few_relations}.

The minimal set of the monomial relations
of this operad $\p={\mathcal {NU}}_3$ consists of the monomials with one of the following 4 tree skeletons:
$$
 \xi(\zeta(\alpha(\textrm{-},\textrm{-}),\alpha(\textrm{-},\textrm{-})), \alpha(\textrm{-},\textrm{-})),
\text{ where } \xi,\zeta \in \{\alpha,\beta\}.
$$
Then the set $T(\p)$ from  Lemma~\ref{lem::buts_few_relations} consists of the following five tree skeletons:
$$
I  \text{ (the identical operation)}, a := \alpha(\textrm{-},\textrm{-}), b := \beta(\textrm{-},\textrm{-}), A := \alpha(\alpha(\textrm{-},\textrm{-}), \alpha(\textrm{-},\textrm{-})),
B:= \beta(\alpha(\textrm{-},\textrm{-}), \alpha(\textrm{-},\textrm{-})).
$$
We get the following equations for the corresponding generating functions $ y_{a}$,  $y_{b}, y_A, y_B$, and $y_I = E_{\p}$:
$$
\left\{
\begin{array}{l}
y_I = z+ y_{a} + y_b,\\
y_{a} = y_b = \frac{1}{2}y_I^2 - \frac{1}{2}( y_A y_a + y_B y_a + y_a y_A +y_a y_B) +\frac{1}{2} y_A^2, \\
y_A = y_B = \frac{1}{2} y_a^2 - \frac{1}{2}(y_A y_a + y_a y_A) + \frac{1}{2} y_A^2.
\end{array}
\right.
\Leftrightarrow
\left\{
\begin{array}{l}
y_I = z+ 2y_{a},\\
y_{a} = \frac{1}{2}y_I^2 - 2 y_a y_A +\frac{1}{2} y_A^2, \\
y_A =  \frac{1}{2} y_a^2 - y_a y_A + \frac{1}{2} y_A^2.
\end{array}
\right.
$$
After elimination of the variables $y_A$ and $y_a$, we obtain the following algebraic equation for $y_I = E_{{\mathcal {NU}}_3}$:
$$
{y_I}^{4}+ \left( 12\,z-24 \right) {y_I}^{3}+ \left( 30\,{z}^{2}+8\,z+80 \right) {y_I}^{2}+ \left(-36\,{z}^{3}+ 24
\,{z}^{2}-32\,z -64 \right) y_I+9\,{z}^{4
}-8\,{z}^{3}+16\,{z}^{2}+64\,z  = 0.
$$
It follows that
$$
E_{{\mathcal {NU}}_3} = z+{z}^{2}+2\,{z}^{3}+5\,{z}^{4}+14\,{z}^{5}+{\frac {167}{4}}{z}^{6}+
130\,{z}^{7}+{\frac {26745}{64}}{z}^{8}+{\frac {44045}{32}}{z}^{9}+{
\frac {36969}{8}}{z}^{10}+O \left( {z}^{11} \right) .
$$
\end{exam}

\subsubsection{Examples of computations via homology}

\begin{exam}
\label{ex::sym_homol_exam}
Let us define a class of operads where all relations are relations on the commutators.

Namely, any given finite set of operations $\Upsilon$ and
finite set of linearly independent elements $\Phi\subset\f(\Upsilon)$
defines an operad
$$
\q_\Phi := \f(\Upsilon\cup \{[\textrm{-},\textrm{-}]\})  /
\left(
\begin{array}{c}
[[x_1,x_2],x_3] +[[x_2,x_3],x_1] +[x_3,x_1],x_2] = 0  \\
 g([\textrm{-},\textrm{-}],\ldots,[\textrm{-},\textrm{-}]) \text{ for all } g\in \Phi
\end{array}
\right)
$$
In other words, the operad $\q_{\Phi}$ is generated by the union of the set $\Upsilon$ and a Lie bracket $[\textrm{-},\textrm{-}]$.
The first identity is the Jacobi identity for the Lie bracket.
 The set of additional identities in the lower line consists of substitutions of the Lie bracket into
the arguments
of the operations in $\Phi$. (I.~e., we put the bracket $[\textrm{-},\textrm{-}]$ in each leaf of $g$).

\begin{proposition}
\label{prop::hom_exam}
 The above set of  relations of the operad $\q_\Phi$  forms
a shuffle regular Gr\"obner basis
with respect to the dual path-lexicographical ordering.
(The ordering of generators does not matter.)
\end{proposition}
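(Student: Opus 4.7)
The plan is to identify the leading monomials of the defining relations under the dual path-lex order, verify that they form a shuffle regular set, and then check that all S-polynomials reduce to zero modulo the given generators.

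First, I would rewrite the Jacobi identity in shuffle canonical form using antisymmetry of the bracket where needed, obtaining $[[x_1,x_2],x_3] - [x_1,[x_2,x_3]] - [[x_1,x_3],x_2] = 0$. Under dual path-lex the right-combed monomial $[x_1,[x_2,x_3]]$ is selected as the leading term. Since this is the unique shuffle monomial with planar skeleton $[\textrm{-},[\textrm{-},\textrm{-}]]$, its contribution to the leading-term set is trivially shuffle regular. For each $g\in\Phi$, the leading monomial of $g([\textrm{-},\textrm{-}],\ldots,[\textrm{-},\textrm{-}])$ is $\widehat{g}([\textrm{-},\textrm{-}],\ldots,[\textrm{-},\textrm{-}])$, where $\widehat{g}$ denotes the leading monomial of $g$ in $\f(\Upsilon)$. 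Varying the admissible shuffle labellings of the $2\operatorname{ar}(g)$ input slots produces all shuffle monomials with this common planar skeleton as leading terms of the various shuffle versions of the relation, so the full leading-term set is shuffle regular.

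Second, to verify the Gr\"obner basis property I would classify S-polynomials into three types. (a) Jacobi-with-Jacobi overlaps reduce to zero, as Jacobi alone is known to form a Gr\"obner basis of the Lie operad under dual path-lex (an instance of the classical PBW/Koszul behavior of Lie). (b) No overlaps arise between Jacobi and a $g$-relation: the leading monomial of Jacobi consists solely of bracket vertices and contains a bracket directly above another bracket, whereas the leading monomial of $g([\textrm{-},\textrm{-}],\ldots,[\textrm{-},\textrm{-}])$ places $\Upsilon$-operations strictly above its bracket leaves, never chaining two brackets, and has non-bracket root; hence neither leading monomial is a subtree of the other, and a root-at-root overlap is impossible. (c) Overlaps between two $g$-relations ($g,g'\in\Phi$) occur entirely within their upper $\Upsilon$-parts. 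The corresponding S-polynomial is the image under the bracket substitution $h\mapsto h([\textrm{-},\textrm{-}],\ldots,[\textrm{-},\textrm{-}])$ of the usual S-polynomial $S(g,g')\in\f(\Upsilon)$; this image lies in the ideal generated by $\{g([\textrm{-},\textrm{-}],\ldots)\colon g\in\Phi\}$ and reduces to zero modulo the given set together with the Jacobi generator.

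The main obstacle is case (c): one must check that every reduction step applied to $S(g,g')$ in $\f(\Upsilon)$ lifts to a valid reduction in $\f(\Upsilon\cup\{[\textrm{-},\textrm{-}]\})$ using only the given generators. This rests on the observation that the bracket substitution is compatible with operadic composition and with the chosen orderings: leading monomials of $\f(\Upsilon)$-elements are sent to leading monomials of their substitutions, and the $\Phi$-generated ideal is mapped into the ideal generated by the $g$-relations. Combined with the resolutions of cases (a) and (b), this closes the argument.
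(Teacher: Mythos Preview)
Your classification of overlaps is inverted relative to what actually happens, and this creates a genuine gap.

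In case (b) you assert that there are no overlaps between the Jacobi leading term $[x_1,[x_2,x_3]]$ and a leading term $\widehat g([\textrm{-},\textrm{-}],\ldots,[\textrm{-},\textrm{-}])$. That is false. For each bracket leaf of $\widehat g([\textrm{-},\textrm{-}],\ldots,[\textrm{-},\textrm{-}])$ there is a small common multiple obtained by grafting a second bracket under that leaf: the resulting tree contains $\widehat g([\textrm{-},\textrm{-}],\ldots,[\textrm{-},\textrm{-}])$ as a left divisor and $[x_i,[x_j,x_k]]$ as a right divisor, and the two share the bracket vertex at that leaf. Your criterion ``neither is a subtree of the other and a root-at-root match is impossible'' does not exhaust the notion of a small common multiple in the operadic Buchberger criterion; a shared internal vertex at the boundary is enough. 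These Jacobi--$g$ overlaps are precisely the ones the paper analyses: the corresponding $S$-polynomial is $g(\ldots,[x_i,[x_j,x_k]],\ldots) - \widehat g(\ldots,J(x_i,x_j,x_k),\ldots)$, and one reduces it to zero by applying the $g$-relation twice more (with $[[x_i,x_j],x_k]$ and $[[x_i,x_k],x_j]$ in the $i$-th slot) and then cancelling against the Jacobiator expansion.

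Conversely, your case (c) is vacuous: two leading terms $\widehat g([\textrm{-},\textrm{-}],\ldots)$ and $\widehat{g'}([\textrm{-},\textrm{-}],\ldots)$ cannot properly overlap, because any attempt to nest the $\Upsilon$-part of one inside the other forces a $\Upsilon$-vertex of one to coincide with a bracket vertex of the other. Even if such overlaps existed, your reduction argument presupposes that $S(g,g')$ reduces to zero modulo $\Phi$ in $\f(\Upsilon)$, i.e.\ that $\Phi$ is already a Gr\"obner basis there---but the hypothesis is only that $\Phi$ is linearly independent.
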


\begin{proof}
There are two types of nontrivial intersections of leading terms of relations in $\q_\Phi$:
\begin{itemize}
 \item The leading term $[x_1,[x_2,x_3]]$ of the Jacobi identity intersects with
 itself.
The corresponding $s$-polynomial written for the monomial $[x_1,[x_2,[x_3,x_4]]]$
reduces to zero because the operad Lie is PBW according to the dual path-lexicographical ordering.
 \item For any given $g\in\Phi$
the leading term $\widehat g([,],\ldots,[,])$ of the relation $g([,],\ldots,[,])$ of arity $m$
has $m$ different intersections with the leading term of the Jacobi identity.
\end{itemize}
The typical $s$-polynomial of the second kind looks as follows:
$$
S:= g(\ldots,[x_i,[x_j,x_k]],\ldots) - \hat{g}(\ldots,J(x_i,x_j,x_k),\ldots),$$
 where  $J(x_i,x_j,x_k):= [x_i,[x_j,x_k]] -[[x_i,x_j],x_k]+[[x_i,x_k],x_j]$ is a notation for Jacobiator.
The following reductions reduces this $s$-polynomial to zero:
$$
S + g(\ldots,[[x_i,x_j],x_k],\ldots) - g(\ldots,[[x_i,x_k],x_j],\ldots)
 -(g-\hat{g})(\ldots,J(x_i,x_j,x_k),\ldots) = 0.
$$
\end{proof}
Starting from a Gr\"obner basis one can define a resolution (also
called a DG  model) for the monomial replacement as it is
prescribed in~\cite{dk_resolutions} and then deform the
differential to get a resolution for $\q_\Phi$. In order to
compute the generating series of the operad it is enough to
describe the generators in the last resolution. This is given in
the following
\begin{proposition}
 There exists a resolution of $\q_\Phi$  generated by the union of the following sets:
\begin{itemize}
 \item the set of generators $\Upsilon$ with homological degree zero,
 \item the set of right-normalized commutators $L:=\{[x_1,[x_2,\ldots,[x_{k-1},x_{k}]\ldots]]|k\geq 2\}$,
 (The homological degree of the commutator on $k$-letters is equal to $(k-2)$.)
 \item the union of sets $\{ g(L,\ldots,L)_{Sh} \}$ for all $g\in \Phi$
(The homological degree of $g(l_1,\ldots,l_k)_{Sh}$ is equal to $\sum \deg (l_i) +1$.)
\end{itemize}
\end{proposition}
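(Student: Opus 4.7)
The plan is to apply the monomial-resolution construction of \cite{dk_resolutions} to the monomial replacement $\hat{\q}_\Phi$, which by Proposition~\ref{prop::hom_exam} has a shuffle regular finite Gr\"obner basis whose leading monomials are the Jacobi term $[x_1,[x_2,x_3]]$ and the monomials $\widehat{g}([\textrm{-},\textrm{-}],\ldots,[\textrm{-},\textrm{-}])$ for $g\in\Phi$; afterwards I would deform the resulting differential to compute $\q_\Phi$ itself. By the description of the monomial resolution recalled at the start of Section~\ref{sec::homology_algorithm}, its generators are exactly the pairs $(v,\{w_1,\ldots,w_n\})$ whose $w_i$'s are submonomials of $v$ isomorphic to one of the two leading-term shapes and which satisfy $(h1)$ and $(h2)$. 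The bulk of the proof consists in classifying such pairs and matching them with the three families in the statement.

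To organize the classification I would call a vertex of $v$ \emph{bracket-type} if it is labelled by $[\textrm{-},\textrm{-}]$ and \emph{$\Upsilon$-type} otherwise. The Jacobi leading term is purely bracket-type and covers a single internal edge whose lower endpoint is a right child, while each $\widehat{g}([\textrm{-},\textrm{-}],\ldots,[\textrm{-},\textrm{-}])$ has an $\Upsilon$-type top piece (a copy of the leading monomial of $g$) with a bracket-type vertex attached below each of its leaves. This dictates three forced behaviors for the internal edges of $v$: a bracket/bracket edge can be covered only by a Jacobi pattern; a $\Upsilon$/bracket edge can be covered only inside a $\widehat{g}$-pattern with the $\Upsilon$-vertex on top; and a $\Upsilon$/$\Upsilon$ edge can be covered only as an internal edge of the $\Upsilon$-part of some $\widehat{g}$-pattern.

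Two mutually exclusive cases then arise. If $v$ has only bracket-type vertices, the "right-child" constraint on Jacobi pairs forces $v$ to be right-combed, hence a right-normalized commutator $l\in L$ on $k\ge 2$ letters; its $k-2$ Jacobi coverings are then uniquely determined, yielding homological degree $k-2$ as claimed. If $v$ has at least one $\Upsilon$-type vertex, then since no covering can cross an edge going downward from a bracket into an $\Upsilon$-type vertex, all $\Upsilon$-type vertices must form a single connected subtree containing the root of $v$; condition $(h2)$ together with minimality of the generators $\Phi$ then forces this subtree to coincide with $\widehat{g}$ for exactly one $g\in\Phi$, and each of its leaf-arguments extends inside $v$ into a right-normalized commutator $l_i\in L$ covered by its own forced Jacobi pairs. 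The total number of coverings is $1+\sum_i\deg(l_i)$, matching the asserted homological degree of $g(l_1,\ldots,l_m)_{\sh}$. A direct check then shows that every such pair indeed satisfies $(h1)$ and $(h2)$.

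The last step is to deform the canonical differential on the monomial resolution into a differential on the same free graded operad whose homology is $\q_\Phi$. I would use the standard homological perturbation argument: the leading-monomial filtration on $\f(\Upsilon\cup\{[\textrm{-},\textrm{-}]\})$ lifts to a filtration on the resolution whose associated graded is the monomial case, and the Gr\"obner-basis property proved in Proposition~\ref{prop::hom_exam} guarantees that the obstruction to squaring to zero vanishes because every $s$-polynomial reduces to zero. The hard part of the whole argument is the combinatorial classification of the generators carried out in the previous two paragraphs, in particular the uniqueness of the $\widehat{g}$-pattern in the mixed case; once this is in place the deformation step is formal and the three families listed in the statement exhaust the generators of the resolution.
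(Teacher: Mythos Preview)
Your approach is precisely the one the paper indicates but does not carry out: the paper merely states that one applies the monomial-resolution construction of \cite{dk_resolutions} to the Gr\"obner basis from Proposition~\ref{prop::hom_exam} and then deforms the differential, and immediately passes to the generating-series consequence~\eqref{eq::series::Lie::Phi} without classifying the generators. Your classification of the pairs $(v,\{w_1,\dots,w_n\})$ via the bracket-type/$\Upsilon$-type dichotomy is the correct way to fill this gap, and the degree counts match.

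One small point worth tightening: your appeal to ``minimality of the generators $\Phi$'' in the mixed case is not quite the right hypothesis. What actually forces the $\Upsilon$-subtree $T$ to equal a single $\widehat g$ is the observation you essentially already made: any $\widehat g([\textrm{-},\textrm{-}],\dots,[\textrm{-},\textrm{-}])$-pattern inside $v$ must have all its leaf-brackets landing on genuine bracket vertices of $v$, hence on leaves of $T$; together with connectedness this forces the $\Upsilon$-part of the pattern to be a subtree of $T$ reaching from some vertex down to \emph{all} leaves of $T$ below it. If its root were strictly below the root of $T$, the edge above it would be an $\Upsilon/\Upsilon$ edge uncoverable by any other pattern. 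So the root of the pattern is the root of $T$, whence the pattern's $\Upsilon$-part equals $T$. No minimality assumption on $\Phi$ is needed beyond the distinctness of leading terms implicit in the Gr\"obner-basis statement.
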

The generating series of the right normalized commutators $L$ is
$( e^{-z}+z-1)$. Multiplying the generating series of each of the
above sets by $(-1)^h$, where $h$ is the homological degree, we
get the equation \begin{equation} \label{eq::series::Lie::Phi}
E_{\q_{\Phi}}^{-1}(z)  =1 -  e^{-z} - E_{\Upsilon}(z) - E_{\Phi}(
e^{-z}+z-1)
\end{equation}
for  the functional inverse $E_{\q_{\Phi}}^{-1}$ of the
exponential generating series of the operad $\q_{\Phi}$.
\end{exam}

\begin{exam}
\label{ex::hom_exam_3}
 Consider a particular example of the above construction consisting of
the class of Lie-admissible non-associative algebras. Let $\p$ be
an operad generated by one binary non-symmetric operation
(multiplication) subject to the following identities:
\begin{gather*}
[a,[b,c]]+[b,[c,a]]+[c,[a,b]] = 0 , \\
[a,b][c,d] +[c,d][a,b]=0
\end{gather*}
where, as usual,  $[a,b] = ab-ba$. The operad $\p$  satisfies
the conditions of Proposition~\ref{prop::hom_exam}
 with $\Upsilon = \Phi = \{ \beta: (x_1,x_2)\mapsto x_1x_2+x_2x_1 \}$.
It follows that $E_{\Upsilon}(z) = E_{\Phi}(z) = z^2/2$. The
identity~\eqref{eq::series::Lie::Phi} gives the following equation
on the generating series $E_{\p}(z)$ after appropriate
substitutions:
$$
 E_{\p}^{-1}(z) = 1-  e^{-z} -\frac{z^2}{2} - \frac{( e^{-z} +z-1)^2}{2}
\Leftrightarrow 3-2\,{ E_{\p}}+2\,{  E_{\p}}\,{ e^{{-  E_{\p}}}}+{ e^{{-2  E_{\p}}}}-4\,{ e^{- E_{\p}}}
= 2z.
$$
It follows that
$$
E_{\p} (z) = z+{z}^{2}+{\frac {11}{6}}{z}^{3}+{\frac {
49}{12}}{z}^{4}+{\frac {1219}{120}}{z}^{5}+{\frac {811}{30}}{z}^{6}+{
\frac {75919}{1008}}{z}^{7}+{\frac {97175}{448}}{z}^{8}+{\frac {
25827439}{40320}}{z}^{9}+{\frac {116679221}{60480}}{z}^{10}+O \left( {
z}^{11} \right).
$$

\end{exam}

\section{Remarks and conjectures}

\label{sec::final_remarks}

\subsection{An analogy with graded associative algebras}
\label{subs:motiv}

Many results in the theory of operads have an analogy in the
theory of graded associative algebras. We now  review some particular cases.

 For a graded finitely generated associative
algebra $A = A_0\oplus A_1 \oplus A_2 \oplus \dots$ over a field
$\kk$, its {\em Hilbert series} is defined as the generating function
for the dimensions of the graded homogeneous components:
$$
               H_A(z) = \sum_{n\ge 0} (\dim_\kk A_n) z^n.
$$
Whereas in general the series $H_A(z)$ can be very complicated
(e.g.,  a   transcendental function), for a number of important classes of
algebras it is a rational function
$$
              \frac{p(z)}{q(z)}, \mbox{ where } p(z), q(z)  \in
              \Z[z].
$$
Indeed, $H_A(z)$ is rational if  $A$ is
 commutative, or Noetherian with a polynomial identity, or  relatively free, or Koszul (according to a conjecture of~\cite{pp}),
 etc.
 A general class of algebras with
 rational Hilbert series is the class of algebras with
 a finite Gr\"obner basis of  relations.
 Finitely presented monomial algebras, PBW algebras,
 and many other common types of algebras are particular examples of this class.
 For a survey of these and
  other results on Hilbert series of associative algebras,
 we refer the reader to~\cite{ufn} and references therein.

We infomally conjecture that as each common algebra has a rational Hilbert series, 
for  each common operad the generating function will also belong to  a particular  identifiable set $M$.
Let us try to bound this hypothetical set $M$.

Certainly,  the generating series of the most useful operads such
as operads of commutative, Lie, and associative algebras should
belong to $M$. Since
we have
$$
E_{\com} =  e^z-1, E_{\ass} = \frac{z}{1-z}, E_{\lie} = -\ln(1-z),
$$
we conclude that, at least, the
generating series of a generic symmetric operad
may be non-rational and may be exponential or logarithmic.
In addition, finite dimensional operads seem
to be simple enough to have ``general'' generating series.
Hence $M$ should also include the polynomials with
rational coefficients.
Free finitely generated operads are also
general; if the generating series of the vector space (more
precisely, the symmetric module) $V$ of generators of such an operad
$\F$ is a polynomial $p(z) = E_V$, then the generating series of
$\F$ is
$$
E_{\F} = (f_V^{-1})(z),
$$
where $f_V(z) = z - p(z)$ and $\cdot^{-1}$ stands for the inverse function.
 This means that $M$ also
include algebraic (over $\QQ$) functions.

Note that  the sets of quadratic,
finitely presented, and the other main types of operads are closed
under direct sum with
common identity component and composition. Thus,
it seems reasonable to assume that $M$ is closed under the corresponding
formal power series operations, that is, the operations that send
the pair of formal power series $f(z) = E_{\p}(z)$ and $g(z) =
E_{\q}(z)$ to
$$
E_{\p \oplus \q} =  f(z) + g(z) - z
$$
and
$$
E_{\p(\q)} = f(g(z)).
$$

 Finally, since the Koszul duality
plays an important role in the theory of operads and its applications,
one would want to have that if a generating series of a Koszul operad $\P$
belongs to $M$, then the series of its Koszul dual $\P^!$
should also belong to $M$. To ensure this, we assume that $M$ is closed under the
operation which sends $f(z)$ to
$$
   -(f^{-1})(-z).
$$

Note that while the generating series of $\com$ and $\lie$ are neither
rational nor algebraic, both these series satisfy simple first-order
differential equations. In particular, these functions
 are differential algebraic. In view of the
consideration above, we state the following claim.

\begin{thesis}
\label{main_thesys}
 The exponential generating series of a generic finitely
presented symmetric operad is differential algebraic.
\end{thesis}

As for the ordinary  generating series of non-symmetric operads (i.e.,
the exponential generating series of symmetric operads which are
symmetrizations of the non-symmetric ones), the class of
such general functions should include, at least,  the above generating
function of $\ass$, polynomials over nonnegative integers (for
finite dimensional operads) and formal power series which are
roots of polynomials with integer coefficients (for free operads). In
particular, such generating series are algebraic, that is, they
satisfy some non-trivial algebraic equations over $\mathbb{Z}[z]$.

Some particular examples of differential equations for generating series of symmetric operads 
is collected in~\cite{zin} and by Chapoton in~\cite{Chapot_encycl}, as he kindly pointed out to us.

\begin{thesis}
\label{nonsym_thesys}
 The generating series of a generic finitely
presented non-symmetric operads is algebraic over $\mathbb{Z}[z]$.
\end{thesis}

Suppose that the sequence of dimensions  $\dim \p_n$ of the components of an operad
has slow growth (e.g., $\dim \p_n$ is  bounded by some polynomial of $n$). Then in many  examples
the ordinary generating series $G_{\p}$ of
the operad $\p$ is quite simple. For example, the operad $\com $
has a rational generating series:
$$
G_{\com} (z) = z+z^2+z^3 +\dots = \frac{z}{1-z}.
$$
Such examples suggest the following claim.

\begin{thesis}
\label{thesys_subexp}
The ordinary
generating series $G_\p$ of a generic symmetric or non-symmetric operad $\p$ is rational
if the sequence $\{ \dim \p_n\}$ is
bounded by a polynomial in $n$.
\end{thesis}

In particular, if the sequence $\{ \dim \p_n\}$ is bounded by a
constant, then it is eventually periodic.

 To support our claims, we
consider operads with  finite Gr\"obner bases of relations.
The analogy with the theory of associative algebras leads us to expect
that the generating series of operads with finite Gr\"obner bases
are in our class.
This expectation is suggested by the results of this article.

However, the class of  operads which are generic in the sense of our Expectations 
do not cover all quotients of the associative operad. We discuss this later in Subsection~\ref{subs:ass}.

\subsection{Operads with quadratic Gr\"obner bases}

\label{subs:conj_PBW}

Let us formulate a couple of conjectures about PBW operads (i.e. the operads which admits a quadratic Gr\"obner basis).

\begin{conj}\label{conj::PBW::series}
The exponential generating series of a symmetric PBW operad  with binary generators is differential algebraic.
\end{conj}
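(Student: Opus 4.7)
The plan is to deduce this conjecture from Theorem~\ref{th:main_sym_intro}, which already delivers differential algebraicity of $E_{\p}$ for any symmetric operad $\p$ carrying a finite \emph{shuffle regular} Gr\"obner basis of relations. A PBW operad has, by definition, a finite quadratic Gr\"obner basis, so the only missing hypothesis is shuffle regularity. Accordingly, I would try to establish Conjecture~\ref{conj::PBW} in the binary case -- that is, to construct, for each binary PBW operad $\p$, a shuffle regular monomial operad $\tilde{\p}$ with $\dim\tilde{\p}(n)=\dim\p(n)$ for every $n$ -- and then apply Corollary~\ref{cor::main_cor} to $\tilde{\p}$ to conclude that $E_{\p}=E_{\tilde{\p}}$ is differential algebraic.

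My first tactic for producing $\tilde{\p}$ is to search for an admissible ordering on shuffle monomials whose associated quadratic Gr\"obner basis of $\p$ has leading terms forming a shuffle regular set; in that case one can simply take $\tilde{\p}$ to be the monomial replacement $\widehat\p$. This is motivated by Example~\ref{ex:reg_assoc} and the surrounding discussion, where the non-shuffle-regular operads Lie and PreLie become shuffle regular after replacing path-lex by dual path-lex. With binary generators $\Upsilon=\{\alpha_1,\dots,\alpha_k\}$ the quadratic leading monomials live in the finite collection of $2k^2$ shuffle skeletons of the form $\alpha_i(\alpha_j(-,-),-)$ or $\alpha_i(-,\alpha_j(-,-))$, so the search over admissible orderings is combinatorially constrained. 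Shuffle regularity at the quadratic level is equivalent to the leading-term set being a disjoint union of complete leaf-shuffling orbits within these skeletons, and $\dim\p(3)$ alone determines how many such orbits are forced to appear.

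If no single ordering achieves shuffle regularity, my second tactic is to construct $\tilde{\p}$ intrinsically as a monomial operad whose forbidden quadratic monomials are chosen to be shuffle regular with the same orbit multiplicities as the forbidden set of $\p$. A third possible route uses Koszul duality: PBW operads are Koszul, the Koszul dual $\p^!$ of a binary PBW operad is again binary PBW, and the Ginzburg--Kapranov functional equation $E_{\p^!}(-E_{\p}(-z))=z$ transfers differential algebraicity between $\p$ and $\p^!$, so it would suffice to handle whichever of the two admits a shuffle regular presentation.

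The main obstacle will be matching dimensions in arities $n\ge 4$. At arity $3$ the dimension is pinned down by the orbit multiplicities per shuffle skeleton, but at higher arities $\dim\p(n)$ depends on \emph{which} representative of each quadratic shuffle skeleton is forbidden at every sub-configuration of a shuffle tree; swapping one representative for another within the same orbit can change the number of arity-$n$ normal forms in a non-obvious way. Overcoming this will likely require new structural input -- perhaps a closer analysis of the symmetric group actions on $\p(n)$ forced by the PBW property, or a formula for the Hilbert series of the right ideals $\m_{b_i}$ from the proof of Theorem~\ref{th:sym_mon_difurs} that depends only on the orbit data of the Gr\"obner basis. This is precisely the content of Conjecture~\ref{conj::PBW}, and is the reason that the present statement remains a conjecture rather than a theorem.
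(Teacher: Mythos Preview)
The statement is a \emph{conjecture}, and the paper does not prove it. What the paper offers in Subsection~\ref{subs:conj_PBW} is exactly the strategy you outline: reduce Conjecture~\ref{conj::PBW::series} to the stronger Conjecture~\ref{conj::PBW} (existence of a shuffle regular monomial operad $\q$ with $\dim\q(n)=\dim\p(n)$), and then invoke Corollary~\ref{cor::main_cor}. The paper's heuristic for why Conjecture~\ref{conj::PBW} might hold is the $\Sigma_3$-representation-theoretic analysis you allude to in your first tactic---decomposing the quadratic part into one- and two-dimensional irreducibles and matching each to one of the two shuffle-regular orbits of quadratic monomials. Your proposal is thus not a proof but a faithful reconstruction of the paper's own conjectural road map, and you correctly identify the genuine obstruction (matching dimensions for $n\ge 4$) that keeps this a conjecture.

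Your second and third tactics (building $\tilde\p$ intrinsically, or passing to the Koszul dual) go slightly beyond what the paper sketches, but neither circumvents the core difficulty: in all three approaches the unresolved step is precisely Conjecture~\ref{conj::PBW} or an equivalent statement. So there is no gap in your reasoning relative to the paper---both stop at the same open problem.
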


This conjecture is motivated by the following stronger conjecture that has been checked in all known examples.

\begin{conj}\label{conj::PBW}
 Let $\p$ be a symmetric operad generated by binary operations and with quadratic relations that
form a Gr\"obner basis according to one of the admissible
orderings of shuffle monomials. Then there exists a monomial
shuffle regular operad $\q$ with the same  dimensions of
operations ($\dim\p(n)=\dim\q(n)$ for all $n$).
\end{conj}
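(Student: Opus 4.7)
The plan is to construct the monomial shuffle regular operad $\q$ by carefully modifying the leading-term set $\hat\Phi$ of the quadratic Gr\"obner basis of $\p$. The key observation is that, since $\p$ is symmetric, the space of quadratic relations $R \subset \f(\Upsilon)(3)$ is an $S_3$-submodule of the free operad component, and hence the multiplicities of the $S_3$-irreducibles in $R$ are intrinsic numerical invariants of $\p$. By contrast, the set $\hat\Phi$ of leading monomials chosen by an admissible order need not be $S_3$-stable, which is precisely why it may fail to be shuffle regular.

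First I would analyze the quadratic part skeleton by skeleton. For each ordered pair $(\mu,\nu)$ of binary generators, the subspace of $\f(\Upsilon)(3)$ built from $\mu$ and $\nu$ decomposes over the two planar internally labeled skeletons (the left comb $\mu(\nu(-,-),-)$ and the right comb $\mu(-,\nu(-,-))$), carrying $2$ and $1$ shuffle monomials respectively. The strategy is: given the $S_3$-character of $R$, produce a shuffle regular candidate $\hat\Phi'$ of the same total cardinality whose ``per-skeleton count'' is either $0$ or the maximal number of shuffle monomials with that skeleton. This amounts to a packing problem, and one has to argue that the multiplicity data of $R$ as an $S_3$-module always admits such an all-or-nothing presentation, using that the characters of the ``skeleton strata'' span a sufficiently rich sublattice of the character lattice of $S_3$.

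Once a quadratic set $\hat\Phi'$ is chosen, one has to verify that the monomial operad $\q:=\f(\Upsilon)/(\hat\Phi')$ has the same dimensions as $\p$ in every arity. By the PBW property, $\dim\p(n)$ is the number of normal-form shuffle trees that avoid $\hat\Phi$ as a quadratic divisor, and similarly for $\q$ with $\hat\Phi'$. I would propose to prove the dimension equality by producing a bijection between the normal forms of $\hat\p$ and those of $\q$, constructed recursively from the quadratic identification via the rewriting structure of PBW operads, in the spirit of the diamond lemma.

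The main obstacle I expect is this last step: even when the two quadratic components agree in dimension and as $S_3$-modules, higher-arity components depend in a delicate combinatorial way on \emph{which} shuffle monomials are leading, not just on their count. A reasonable first reduction is to try to prove the sharper statement that for every PBW operad some admissible ordering already makes the Gr\"obner basis shuffle regular, so that one may simply take $\q=\hat\p$ for that order; the operads \lie\ and \emph{PreLie}, which are not shuffle regular under the path-lexicographic order but become shuffle regular under the dual path-lex order (Example~\ref{ex:reg_assoc}), are the motivating evidence for this. If this sharper statement fails, then $\q$ must be an external deformation of $\hat\p$ and the higher-arity bijection is the genuinely hard point, requiring a deeper understanding of the interaction between an admissible order and the symmetric group action on quadratic relations.
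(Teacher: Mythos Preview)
The statement you are attempting to prove is labeled \textbf{Conjecture}~\ref{conj::PBW} in the paper, and the paper does \emph{not} prove it. What follows the statement in the paper is a paragraph of heuristic motivation (``Why this conjecture might be true?''), not a proof. So there is no proof in the paper for your attempt to be compared against.

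That said, your plan and the paper's heuristic are very close in spirit. Both exploit that the quadratic relation space $R\subset\f(\Upsilon)(3)$ is an $S_3$-module, decompose it along the $1$- and $2$-dimensional irreducibles sitting inside each $3$-dimensional induced block $\alpha\circ\alpha'$, and then propose to assemble a shuffle regular monomial set $\hat\Phi'$ by matching the $2$-dimensional pieces of $R$ with the two-element skeleton class $\{\alpha(\alpha'(x_1,x_2),x_3),\,\alpha(\alpha'(x_1,x_3),x_2)\}$ and the $1$-dimensional pieces with the singleton $\{\alpha(x_1,\alpha'(x_2,x_3))\}$. Your ``packing problem'' is exactly the paper's ``hope''.

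You have correctly located the genuine gap, and it is the same gap the paper leaves open: even if one succeeds in constructing a shuffle regular $\hat\Phi'$ with $|\hat\Phi'|=\dim R$ (so that $\dim\q(3)=\dim\p(3)$), nothing in either argument controls $\dim\q(n)$ for $n\ge 4$. The paper's heuristic stops at the construction of the quadratic part and says only ``The hope is that the monomial operad $\q$ can be constructed by\ldots''; your proposed recursive bijection between normal forms is the honest place where a proof would have to do real work, and neither you nor the paper supplies it. Your alternative reduction --- try to show that some admissible order already makes the Gr\"obner basis shuffle regular --- is a reasonable sharpening, but it too is open. In short: your proposal is a faithful elaboration of the paper's own heuristic, and it correctly identifies why the statement remains a conjecture.
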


Why might this conjecture be true? The most important assumption
is that the operad $\p$ is symmetric. Therefore, one should use
the representation theory of the permutation groups $\Sigma_n$.
First, using an upper-triangular change of basis one can choose
the set of generators $\Upsilon$ of an operad $\p$ such that the
transposition element $(12)\in \Sigma_2$ preserves this set. This
means that $\forall \alpha\in \Upsilon$ $(12)\cdot \alpha =
(-1)^{\epsilon} \alpha'$ where $\alpha'\in \Upsilon$. Suppose, for
simplicity, that $(12)\cdot \alpha = \pm \alpha$ for all
$\alpha\in \Upsilon$. 
For any given pair of generators $\alpha,\alpha'$ the set of all operations of the form
$\alpha\circ \alpha'$ form a 3-dimensional representation of
$\Sigma_3$ which is  induced from a one-dimensional
$\Sigma_2$-representation. Therefore, it decomposes into the sum
of one-dimensional and two-dimensional irreducible representation.
 The relations for the two-dimensional
representation are
$$
\alpha(\alpha'(x_1,x_2),x_3) + \alpha(\alpha'(x_2,x_3),x_1) + \alpha(\alpha'(x_3,x_1),x_2)
$$
The relations for the one-dimensional representation are
$$
\pm\alpha(\alpha'(x_1,x_2),x_3) =\pm \alpha(\alpha'(x_2,x_3),x_1) =\pm \alpha(\alpha'(x_3,x_1),x_2)
$$
On the other hand, there are two shuffle regular sets of quadratic
monomials:
$$ \{\alpha(\alpha'(x_1,x_2),x_3),\alpha(\alpha'(x_1,x_3),x_2)\}
\text{ and } \{\alpha(x_1,\alpha'(x_2,x_3))\} $$
We conjecture that 
the monomial operad $\q$ can be constructed by taking the leading
term $\alpha(x_1,\alpha'(x_2,x_3))$ of the relation that
nontrivially projects onto the corresponding one-dimensional
$\Sigma_3$-irreducible subspace and the set
$\{\alpha(\alpha'(x_1,x_2),x_3),\alpha(\alpha'(x_1,x_3),x_2)\}$
for those relations which project onto the two-dimensional
$\Sigma_3$-irreducible subspace.

\subsection{Varieties of associative algebras}

\label{subs:ass}
\label{subs:conj_ass}

The symmetric quotient operads of the operad $\ass$ (i.e., the
operads corresponding to the varieties of associative algebras
with polynomial identities; called here {\em associative operads})
form, probably, the widest extensively classified and studied
class of operads. Note that the traditional terminology for these
operads differs from the one we use. In particular, the dimensions
$\dim \p_n$  of an associative operad $\p$ are usually referred to
as {\em codimensions} of the corresponding variety of algebras
with polynomial identities; the exponential and the usual
generating series of the operad are referred as {\em the
codimension series} and {\em the exponential codimension series}
of the corresponding variety. (See, e.g.,~\cite{giza,Kanel_Regev}.)

 We do not know whether
each symmetric quotient operad of $\ass$ has a finite Gr\"obner
basis. On the other hand, one can construct examples of {\em
shuffle} monomial quotients of $\ass$ which are infinitely
presented, that is, which have no finite Gr\"obner basis, at least
with the standard choice of generators. However, the following
properties of the generating series of associative operads are close
to the properties of our operads with  finite shuffle regular
Gr\"obner bases.

Firstly, in many cases the dimensions  $\dim {\p}_n$  for such an
operad $\p$ can be calculated explicitly (see~\cite{ufn,
giza,dr, bd}). In such cases, these dimensions are presented as linear
combinations of integer exponents of $n$ with polynomial
coefficients. In particular, the corresponding exponential generating
functions  are differential algebraic, that
is, they satisfy  the conclusion of our
Corollary~\ref{cor::main_cor}.

Secondly, the only non-symmetric
associative operads are the finite-dimensional ones defined by the
identity $x_1x_2 \dots x_n =0$ (operads of nilpotent associative
algebras); obviously, the generating series of such operads 
satisfy the conclusions of our Theorem~\ref{th-nonsym} and
Corollary~\ref{cor::nonsym_constant_growth}. 

Thirdly, the sequence
of dimensions of an associative operad has either polynomial or
exponential growth (by the results of Regev and Kemer,
see~\cite[Theorems~4.2.4 and~7.2.2]{giza}). This fact is similar to our
Corollary~\ref{cor::sym_constant_growth}.

After these observations, we think that 
Corollary~\ref{cor::main_cor}  (which suggests Expectation~\ref{main_thesys})
holds in a more
general situation, that is, for a wider class of symmetric operads
containing both operads with a finite shuffle regular Gr\"obner
basis and associative operads.

\begin{conj}
\label{conj::assoc_general} The exponential generating series of
any associative operad  is differential algebraic.\footnote{After this paper was submitted, Berele has proved this conjecture by showing that the sequence $\{\dim \p(n)\}$ is holonomic for each associative operad $\p$, see~\cite{berele}.} 
\end{conj}

Note that the two languages of universal algebra (based on  the 
varieties or on the operads) are equivalent provided that the characteristic of the  basic
field $\kk$ is zero. In this case, our
Conjecture~\ref{conj::assoc_general} is equivalent to the following:
{\em the exponential codimension
series of each variety of PI algebras is differential algebraic}.

A stronger version of Conjecture~\ref{conj::assoc_general} holds
for associative operads of polynomial growth, that is, their
ordinary generating series are rational~\cite[Ex.~13]{bd}.
However, for some natural associative operads (e.g., for the
operad  defined by the identities of $3\times 3$ matrices) the
ordinary generating series are not rational~\cite{br}. This means
that the conclusion of
Corollary~\ref{cor::sym_constant_growth} does not hold for
general associative operads. In particular, this means that some
associative operads have no finite shuffle regular Gr\"obner basis
(whereas they are always finitely presented by Kemer's famous
solution of the Specht problem~\cite{kemer_Specht}). This means
also that some associative operads are not generic in the sense of
our Expectation~\ref{thesys_subexp}.

\subsection{Detailed description of generating series}

\label{subs:conj_genseries}

It would be interesting to find a smaller class of formal power
series than the class of differential algebraic ones which include
all (exponential) generating series of generic symmetric operads.
For the operads with a finite shuffle regular Gr\"obner basis, we
propose the following approach.

Let us define a binary  operation on $\QQ [[z]]$ by $f*g:= C(f,g)$,
       where $C(f,g)(z) = \int_0^z f'(w) g(w) \, dw$ is as defined in~(\ref{eq::C(f,g)}).
       Recall that the multiple operation $C$ from equation~(\ref{eq::C(1..n)})
       is also defined via this binary operation:
       $
       C(f_1, \dots , f_n) = (f_1 * (f_2 *\dots (f_{n-1} *f_{n})\dots )$.
       The algebra $\QQ [[z]]$ with respect to the operation $f*g$ satisfies the identity
       $$
        (a*b)*c = a*(b*c+c*b),
       $$
       that is, this is a Zinbiel algebra\footnote{We are grateful to Askar Dzhumadil'daev for pointing us this fact.}.
The Zinbiel operad  (also known as dual Leibniz operad, or the operad of chronological algebras)
were first introduced in~\cite{Loday_Zinbiel}.
       Then one can consider the main system~(\ref{eq::sym_system_eq}) as a set of
       algebraic equations over the Zinbiel subalgebra $\QQ [z] \subset \QQ [[z]]$.

In this connection, the following questions arise.

1. Does it follows that the exponential series $E_\p(z)$ satisfy a {\em single}
algebraic equation over $\QQ [z]$ with respect to the operation $'*'$, that is, does
it follows that $E_\p(z)$ is Zinbiel algebraic over $(\QQ [z],*)$?

2. What are the conditions on the (non-negative rational)
coefficients for a formal power series
to be Zinbiel algebraic (or for a collection $y_0(z),\ldots,y_N(z)$ of
 such formal power series to satisfy a system of Zinbiel algebraic equations)?

The answer to the second question could give a detailed
description of generating series of, at least, operads with
shuffle regular Gr\"obner basis.

\end{document}